\newtheorem{theorem}{Theorem}[section]
\newtheorem{lemma}[theorem]{Lemma}
\newtheorem{proposition}[theorem]{Proposition}
\newtheorem{corollary}[theorem]{Corollary}
\theoremstyle{remark}
\newtheorem{remark}[theorem]{Remark}
\newtheorem{example}[theorem]{Example}
\journal{Journal of Algebra}
\begin{document}

\begin{frontmatter}



\title{Weak crossed-product orders over valuation rings}


\author{John S. Kauta}

\address{School of Computing, Information \& Mathematical Sciences\\ The University of the South Pacific\\
 Suva\\ FIJI ISLANDS.}

\begin{abstract}

Let $F$ be a field, let $V$ be a valuation ring of $F$ of arbitrary
Krull dimension (rank), let $K$ be a finite Galois extension of $F$
with group $G$, and let $S$ be the integral closure of $V$ in $K$.
Let $f:G\times G\mapsto K\setminus \{0\}$ be a normalized
two-cocycle such that $f(G\times G)\subseteq S\setminus \{0\}$, but
we do not require that $f$ should take values in the group of
multiplicative units of $S$. One can construct a crossed-product
$V$-order $A_f=\sum_{\sigma\in G}Sx_{\sigma}$ with multiplication
given by $x_{\sigma}sx_{\tau}=\sigma(s)f(\sigma,\tau)x_{\sigma\tau}$
for $s\in S$, $\sigma,\tau\in G$. We characterize semihereditary and
Dubrovin crossed-product orders, under mild valuation-theoretic assumptions
placed on the nature of the extension $K/F$.
\end{abstract}

\begin{keyword}
Crossed-product orders \sep Weak crossed-product orders \sep Semihereditary orders
\sep Hereditary orders \sep Extremal orders \sep Maximal orders \sep Azumaya algebras \sep Dubrovin valuation rings.

\MSC[2010] 13F30 \sep 16E60 \sep 16H05 \sep 16H10 \sep 16S35
\end{keyword}

\end{frontmatter}


\let\thefootnote\relax\footnote{J. Algebra 402 (2014) 319-350}
\section{Introduction}
\label{sec:intro}
This work is about orders in a crossed-product $F$-algebra $(K/F,G,f)$ which are integral over their center $V$,
a valuation ring of the field $F$,
and generalizes the results obtained in \cite{H,K5,K6, CW2}. Whereas in \cite{H,K5,K6} the valuation ring $V$
is unramified in $K$, and in \cite{H,K5,CW2} $V$ is a discrete valuation ring (DVR),
we seldomly make such assumptions in this paper. Further, our theory complements and enhances the classical theory of crossed-product orders over valuation rings, such as can be found in \cite{Hd, W}, in two significant ways: firstly, we do not require that the valuation ring $V$ should be a DVR, as mentioned above; secondly, we do not require that the values of the two-cocycles associated with our crossed-product orders should be multiplicative units in the integral closure of $V$ in $K$. We shall give a precise description of the crossed-product orders we will be dealing with shortly.

If $R$ is a ring, then $J(R)$ will
denote its Jacobson radical, $Z(R)$ its center, $U(R)$ its group of multiplicative
units, and $R^{\#}$ the subset of all the non-zero elements. The
residue ring $R/J(R)$ will be denoted by $\overline{R}$. Given the
ring $R$, it is called \textit{primary} if $J(R)$ is a maximal ideal
of $R$. It is called \textit{hereditary} if one-sided ideals are
projective $R$-modules. It is called \textit{semihereditary}
(respectively \textit{B\'{e}zout}) if finitely generated
one-sided ideals are projective $R$-modules (respectively are principal).
Let $V$ be a valuation ring of a field $F$. If $Q$ is
a finite-dimensional central simple $F$-algebra, then a subring $R$
of $Q$ is called an order in $Q$ if $RF=Q$. If in addition
$V\subseteq R$ and $R$ is integral over $V$, then $R$ is called a
$V$-order. If a $V$-order $R$ is maximal among the $V$-orders of $Q$
with respect to inclusion, then $R$ is called a \textit{maximal $V$-order}
(or just a \textit{maximal order} if the context is clear). A $V$-order $R$
of $Q$ is called an \textit{extremal} $V$-order (or simply \textit{extremal}
when the context is clear) if for every $V$-order $B$ in $Q$ with
$B\supseteq R$ and $J(B)\supseteq J(R)$, we have $B=R$. If $R$ is an
order in $Q$ then, since it is a PI-ring, it is called a \textit{Dubrovin valuation ring} of $Q$
(or a \textit{valuation ring} of $Q$ in short) if
it is semihereditary and primary (see \cite{D1}).

In this paper, $V$ will denote a commutative valuation domain of \textit{arbitrary}
Krull dimension (rank). Let $F$ be its field of quotients, let $K/F$
be a finite Galois extension with group $G$, and let $S$ be the
integral closure of $V$ in $K$. If $f\in Z^2(G,U(K))$ is a
normalized two-cocycle such that $f(G\times G)\subseteq S^{\#}$,
then one can construct a ``crossed-product'' $V$-algebra
$$A_f=\sum_{\sigma\in G} Sx_{\sigma},$$ with the usual rules of multiplication:
$x_{\sigma}sx_{\tau}=\sigma(s)f(\sigma,\tau)x_{\sigma\tau}$
for $s\in S$, $\sigma,\tau\in G$. Then $A_f$ is
associative, with identity $1=x_1$, and center $V=Vx_1$. Further,
$A_f$ is a $V$-order in the crossed-product $F$-algebra
$\Sigma_f=\sum_{\sigma\in G} Kx_{\sigma}=(K/F,G,f)$.

Two such cocycles $f$ and $g$ are said to be cohomologous over $S$
(respectively cohomologous over $K$), denoted by $f\! \sim_S\! g$
(respectively $f\! \sim_K\! g$), if there are elements
$\{c_{\sigma}\mid \sigma\in G\}\subseteq U(S)$ (respectively
$\{c_{\sigma}\mid \sigma\in G\}\subseteq K^{\#}$) such that
$g(\sigma,\tau)=c_{\sigma}\sigma{(c_{\tau})}c_{\sigma\tau}^{-1}f(\sigma,\tau)$
for all $\sigma,\tau \in G$. Following \cite{H}, let $H=\{\sigma\in
G\mid f(\sigma,\sigma^{-1})\in U(S)\}=\{\sigma\in G\mid x_{\sigma}\in U(A_f)\}$. Then $H$ is a subgroup of
$G$. On $G/H$, the left coset space of $G$ by $H$, one can define a
partial ordering by the rule $\sigma H \leq \tau H
\;\;\textrm{if}\;\; f(\sigma,\sigma^{-1}\tau)\in U(S).$ Then ``$\leq
$'' is well-defined, and depends only on the cohomology class of $f$
over $S$. Further, $H$ is the unique least element. We call this
partial ordering on $G/H$ \textit{the graph of f} and in this paper we will denote it by $\textrm{Gr}(f)$.

Such a setup was first formulated by Haile in \cite{H}, with the
assumption that $V$ is a DVR unramified in $K$, wherein, among other
things, conditions equivalent to such orders being maximal orders
were considered. Since we do not require that $H=G$, $A_f$ is a so-called ``weak" crossed-product order,
but we shall abuse the terminology and refer to it simply as a crossed-product order.
These are the crossed-product orders we will investigate in this paper, \textit{without} the assumption that $V$ is a DVR unramified in $K$.

 It is worth making the following two observations. First,
let $g\in Z^2(G,U(K))$ be an \textit{arbitrary} normalized two-cocycle
with values not necessarily in $S^{\#}$ and let $(K/F,G,g)=\sum_{\sigma\in G}Ku_{\sigma}$
be a crossed-product $F$-algebra with respect to $g$. We can choose a $t\in V^{\#}$
such that, for every $\sigma,\tau\in G$, we have $tg(\sigma,\tau)\in S^{\#}$.
Now let $x_1=u_1=1$ and, for $\sigma\not= 1$, let $x_{\sigma}=tu_{\sigma}$.
Then $x_{\sigma}x_{\tau}=f(\sigma,\tau)x_{\sigma\tau}$ where $f$ is a normalized
two-cocycle satisfying $f(G\times G)\subseteq S^{\#}$, and we have
$(K/F,G,g)=\sum_{\sigma\in G}Kx_{\sigma}\supseteq A_f$.
Thus the crossed-product orders under consideration in this paper occur
quite naturally as $V$-orders \textit{in every crossed-product $F$-algebra.}
Second, let us assume for the moment that $V$ is not necessarily a valuation ring,
but just a commutative domain whose quotient field is $F$. Then $A=A_f$ would still
be an order in $\Sigma_f$ integral over $V$. Therefore if $A$ is semihereditary
then by \cite[Theorem 1.3]{MMUZ} $V$ would be a Pr\"{u}fer domain, that is,
a (commutative) semihereditary domain. But by \cite[Lemma 2.2]{M2} $A$ is
semihereditary if and only if $A_m$ is semihereditary for
each maximal ideal $m$ of $V$, where $A_m$ denotes the central localization
at $m$. Since our interest in this paper is in crossed-product orders that
are at least semihereditary, it is no restriction to assume that $V$ is a
valuation ring of $F$.

Our valuation theoretic terminology will be consistent with that of \cite{E}. If $U$ is a valuation ring of a field $L$, we will denote its value group by $\Gamma_U$. Let $M$ be a maximal ideal of $S$. Then $e(S_M\!\!\mid\!\! F):= [\Gamma_{S_M}:\Gamma_V]$ is called the ramification index of $S_M$ over $F$. The extension $K/F$ is said to be \textit{defectless} if $[K:F]=e(S_M\!\!\mid\!\! F)[\overline{S}_M:\overline{V}]r$, where $r$ is the number of maximal ideals of $S$.
The extension $K/F$ is said to be \textit{tamely ramified} if $\textrm{Char(}\overline{V}\textrm{)}$ does not divide $e(S_M\!\!\mid\!\! F)$ and $\overline{S}_M$ is separable over $\overline{V}$. The extension is called \textit{unramified} if $e(S_M\!\!\mid\!\! F)=1$ and $\overline{S}_M$ is separable over $\overline{V}$.
We let $G^Z(S_M\!\!\mid\!\! F)=\{\sigma\in G\mid \sigma(S_M)\subseteq S_M\}$, the decomposition group of $S_M$ over $F$, and we denote its fixed field, called the decomposition field, by $K^Z(S_M\!\!\mid\!\! F)$. We let
$G^T(S_M\!\!\mid\!\! F)=\{\sigma\in G\mid \sigma(x)-x\in J(S_M)\forall x\in S_M\}$, the inertial group of $S_M$ over $F$, and we denote its fixed field, called the inertia field, by $K^T(S_M\!\!\mid\!\! F)$. Since $K$, $F$, $G$, $S$, $V$, and $M$ will be fixed in this paper, we shall merely refer to $G^Z(S_M\!\!\mid\!\! F)$ as $G^Z$, $G^T(S_M\!\!\mid\!\! F)$ as $G^T$, and so on, if there is no danger of confusion.

Let $\overline{p}=\max(1,\textrm{Char(}\overline{V}\textrm{)})$, the \textit{characteristic exponent} of $\overline{V}$.
Since $K/F$ is a Galois extension, by \cite[Lemma 1]{K3}, $K/F$ is tamely ramified and defectless if and only if $\gcd(\overline{p},\mid\!\! G^T\!\!\mid)=1$, and $K/F$ is unramified and defectless if and only if $\mid\!\! G^T\!\!\mid=1$.

Let $L$ be an intermediate field of $F$ and $K$, let $G_L$ be the Galois
group of $K$ over $L$, let $U$ be a valuation ring of $L$ lying over
$V$, and let $T$ be the integral closure of $U$ in $K$. Then one can
obtain a two-cocycle $f_{L,U}:G_L\times G_L\mapsto T^{\#}$ from $f$
by restricting $f$ to $G_L\times G_L$, and embedding $S^{\#}$ in
$T^{\#}$. As before, $A_{f_{L,U}}=\sum_{\sigma\in G_L}Tx_{\sigma}$
is a $U$-order in $\Sigma_{f_{L,U}}=\sum_{\sigma\in
G_L}Kx_{\sigma}=(K/L,G_L,f_{L,U})$. If $M$ is a maximal ideal of $S$ and $U=K^Z\cap S_M$, then we will denote
$f_{K^Z,U}$ by $f_M$, $A_{f_{K^Z,U}}$ by $A_{f_M}$, and $\Sigma_{f_{K^Z,U}}$
by $\Sigma_{f_M}$, following \cite{H}. Further, we let $H_M=\{\sigma\in G^Z\mid
f_M(\sigma,\sigma^{-1})\not\in M\}$, a subgroup of $G^Z$.
On the other hand, if $U=K^T\cap S_M$, then we will denote
$f_{K^T,U}$ by $f_{(M)}$, $A_{f_{K^T,U}}$ by $A_{f_{(M)}}$, and $\Sigma_{f_{K^T,U}}$
by $\Sigma_{f_{(M)}}$.

In Section~\ref{sec:General-Theory}, we describe the graded Jacobson radical of $A_f$ explicitly based on an argument of Haile \cite{H}, and characterize semihereditary and Dubrovin crossed-product orders when $J(V)$ is not a principal ideal of $V$. We also introduce the notion of a graph of $[f]_M$, denoted by $\textrm{Gr}(f^M)$, where $M$ is a maximal ideal of $S$, and give a sufficient condition for it to be a chain. The graph $\textrm{Gr}(f^M)$ seems to be a useful alternative to the graph $\textrm{Gr}(f)$ when $V$ decomposes in $K$. The two graphs coincide when $V$ is indecomposed in $K$.

In Section~\ref{sec:Tame-Case}, we assume for the most part that the extension $K/F$ is tamely ramified and defectless. With this assumption, we give an explicit description of the Jacobson radical of $A_f$, generalizing \cite[Proposition 3.1(b)]{H} and \cite[Theorem 4.3]{CW2}. When $V$ is a DVR unramified and indecomposed in $K$, Haile \cite[Theorem 2.3]{H} showed that $A_f$ is a hereditary order (actually a maximal order in this case since it is primary) if and only if $H$ is a normal subgroup of $G$ with cyclic quotient and, if $H\not=G$, then the graph $\textrm{Gr}(f)$ is the chain $$H\leq\sigma H\leq\sigma^2H\leq\cdots\leq\sigma^{\mid G/H\mid-1}H$$ for some $\sigma\in G$ satisfying: 1) $G/H=\langle\sigma H\rangle$, 2) $f(\sigma,\sigma^{-1})\in J(S)\setminus J(S)^2$,  and 3) $J(A_f)=x_{\sigma}A_f$.
Wilson \cite[Theorem 2.15]{CW2} extended this result to the case when $V$ is a DVR tamely ramified and indecomposed in $K$. He also showed that, under this assumption, $G^T\subseteq H$ whenever $A_f$ is hereditary. We extend these results to an arbitrary valuation ring $V$ when $J(V)$ is a principal ideal of $V$. We provide various other characterizations of semihereditary crossed-product orders in this section when $V$ is not necessarily indecomposed in $K$, generalizing results in \cite{K5,K6}. We end the section by generalizing two classical results, one by Auslander and Rim and the other by Harada, and we also complete the characterization of the so-called ``hereditary global crossed product orders'' introduced in \cite{Ke}.

In Section~\ref{sec:Dubrovin-Case}, we characterize Dubrovin crossed-product orders under the assumption that $K/F$ is tamely ramified and defectless. Assuming $V$ is a DVR unramified in $K$, it was shown in \cite{H} that $A_f$ is primary if and only if given a maximal ideal $M$ of $S$, there exists a set of right coset representatives $\sigma_1,\sigma_2,\ldots,\sigma_r$ of $G^Z$ in $G$ that is ``nice'' in the sense that $f(\sigma_i,\sigma_i^{-1})\not\in M$ for all $i$. We generalize this result in this section. We also reduce the problem of determining when the crossed-product order $A_f$ is a valuation ring of $\Sigma_f$ to the case when $K/F$ is tame totally ramified and $H=G$.

In Section~\ref{sec:graphs}, we take a closer look at the connections between the various graphs encountered in this paper. If $M$ is a maximal ideal of $S$, we show that the ``nice'' set of right coset representatives of $G^Z$ in $G$ mentioned above exists precisely
when a certain graph isomorphism exists between $\textrm{Gr}(f_M)$ and $\textrm{Gr}(f^M)$. Just as in \cite{H}, we show that the existence of a ``nice'' set of right coset representatives with respect to one maximal ideal of $S$ implies the existence of a ``nice'' set of right coset representatives for all the maximal ideals of $S$, and this is \textit{not} premised on the fact that $V$ is tamely ramified or defectless in $K$.

As pointed out in \cite{K6}, the behavior of the crossed-product order $A_f$ when $J(V)$ is a principal ideal
of $V$ is very similar to its behavior when $V$ is a DVR. This is a recurrent theme throughout this paper. The recent PhD thesis
of Chris Wilson \cite{CW1}, part of which appears in \cite{CW2}, covers the case when $V$ is a DVR tamely ramified in $K$ very well. Some of the arguments in \cite{CW2} will be useful in this paper.

\section{Preliminaries and Generalities}
\label{sec:General-Theory}

Strictly speaking, $A_f$ is not a crossed-product of $G$ over $S$, but rather a $G$-graded ring. Therefore, following \cite{P}, if $G_1$ is a subgroup of
$G$, we will let $S(G_1)=\sum_{\sigma\in G_1}Sx_{\sigma}$. Note that $S(G)=A_f$ and, if $L$ is the fixed field of $G_1$, then $S(G_1)$ is an order in the crossed-product $L$-algebra $K(G_1)=(K/L,G_1,f_{\mid_{G_1\times G_1}})$.

A ring is said to be \textit{coherent} if finitely generated one-sided ideals are finitely presented.
If $R$ is a ring, $\textrm{wgld(}R\textrm{)}$ will denote its \textit{weak global dimension} (see \cite{MR} for
the definition).

Part (1) of the following lemma is known when $H=G$ (see the proof of \cite[Theorem 3.2]{Y}).

 \begin{lemma} \label{lem:Coherent-lemma}
 Let $G_1$ be a subgroup of $G$. Then:
 \begin{enumerate}
    \item $S(G_1)$ is a coherent ring. Consequently,
    \item $A_f$ is semihereditary if and only if $J(A_f)$ is a flat $A_f$-module, and
    \item $S(G_1)$ is semihereditary if and only if $wgld(S(G_1))\leq 1$.
    \end{enumerate}
 \end{lemma}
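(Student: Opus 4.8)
The plan is to prove \textbf{(1)} directly for an arbitrary subgroup $G_1$ and arbitrary cocycle $f$, and then to extract \textbf{(2)} and \textbf{(3)} from coherence together with standard homological facts about coherent rings. The starting point is that $S$, being the integral closure of the valuation ring $V$ in the finite extension $K$, is a (semilocal) Pr\"ufer domain (see \cite{E}); in particular $S$ is semihereditary and hence \emph{coherent}. I would then exploit the fact that $R:=S(G_1)=\bigoplus_{\sigma\in G_1}Sx_{\sigma}$ is a \emph{finite free normalizing extension} of $S$: since $f$ is normalized we have $x_{\sigma}s=\sigma(s)x_{\sigma}$, so $Sx_{\sigma}=x_{\sigma}S$ and $R$ is free of rank $|G_1|$ both as a left and as a right $S$-module. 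This is the only structural input needed, and it makes no reference to whether the $x_{\sigma}$ are units, so the argument is uniform in $f$ and subsumes the known case $H=G$.

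For right coherence of $R$, let $I=\sum_{k=1}^{m}a_kR$ be a finitely generated right ideal. Using $x_{\sigma}S=Sx_{\sigma}$ and $R=\sum_{\sigma\in G_1}Sx_{\sigma}$, one checks that $I$ is generated as a right $S$-module by the finite set $\{a_kx_{\sigma}\}$, so $I$ is a finitely generated right $S$-submodule of the finitely presented (free) right $S$-module $R$. Since $S$ is coherent, $I$ is \emph{finitely presented over $S$}. Now take the right $R$-linear surjection $R^m\twoheadrightarrow I$ and let $N$ be its kernel; restricting scalars to $S$ and applying the standard fact that the kernel of a surjection from a finitely generated module onto a finitely presented module is finitely generated, $N$ is finitely generated over $S$, hence a fortiori over $R$. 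Thus $I$ is finitely presented over $R$, proving right coherence; the left-handed statement is symmetric. This establishes \textbf{(1)}.

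With coherence in hand, \textbf{(3)} follows from the general principle that a coherent ring is semihereditary if and only if its weak global dimension is at most $1$: if every finitely generated one-sided ideal is projective then it is flat, and since ideals are direct limits of their finitely generated subideals, $\mathrm{wgld}(S(G_1))\le 1$; conversely, if $\mathrm{wgld}(S(G_1))\le 1$ then every finitely generated one-sided ideal is flat, and being finitely presented by \textbf{(1)}, it is projective. The same ``finitely presented $+$ flat $=$ projective'' mechanism gives the forward direction of \textbf{(2)}: if $A_f=S(G)$ is semihereditary then $\mathrm{wgld}(A_f)\le 1$, and $J(A_f)$, being a submodule of the free module $A_f$, is flat.

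The hard part will be the reverse implication in \textbf{(2)}, that flatness of $J(A_f)$ forces $A_f$ to be semihereditary. By \textbf{(1)} and \textbf{(3)} this amounts to deducing $\mathrm{wgld}(A_f)\le 1$ from the flatness of the single module $J=J(A_f)$. My plan is to feed the short exact sequence $0\to J\to A_f\to\overline{A_f}\to 0$ into the long exact $\mathrm{Tor}$ sequence: since $A_f$ is free, $\mathrm{Tor}^{A_f}_{n}(\overline{A_f},-)\cong \mathrm{Tor}^{A_f}_{n-1}(J,-)$ for $n\ge 2$, so flatness of $J$ yields $\mathrm{fd}(\overline{A_f})\le 1$. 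Passing from this to $\mathrm{wgld}(A_f)\le 1$ for \emph{all} modules is exactly the delicate point, and it is here that one must use that $\overline{A_f}$ is von Neumann regular (the analogue, for orders integral over a valuation ring, of the semisimplicity of $\Lambda/\mathrm{rad}(\Lambda)$ in the classical theory of hereditary orders). Establishing and invoking this regularity of the radical quotient is the real obstacle; granting it, the reduction of general modules to $\overline{A_f}$-modules closes the argument and completes the equivalence in \textbf{(2)}.
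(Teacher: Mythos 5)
Your treatment of part (1) is correct and is essentially the paper's argument: the paper also reduces to the $S$-module structure, noting that $S(G_1)$ is $S$-free of finite rank, that a finitely generated one-sided ideal $I$ is then a finitely generated $S$-submodule of a projective $S$-module over the semihereditary ring $S$, hence projective over $S$, so that Schanuel's lemma makes the kernel of $S(G_1)^{(n)}\twoheadrightarrow I$ finitely generated over $S$ and a fortiori over $S(G_1)$. Your variant (coherence of $S$ plus ``kernel of a surjection from a finitely generated module onto a finitely presented module is finitely generated'') is an equivalent bookkeeping of the same idea. Part (3) and the forward implication of part (2) are likewise fine and match the paper, which cites \cite[Example 8.20(iii)]{Rt} for the coherent case of ``flat $+$ finitely presented $=$ projective.''

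The genuine gap is the reverse implication of part (2): that flatness of $J(A_f)$ forces $A_f$ to be semihereditary. You correctly get $\mathrm{fd}(\overline{A}_f)\leq 1$ from the flatness of $J(A_f)$, but the passage from $\mathrm{fd}(\overline{A}_f)\leq 1$ to $\mathrm{wgld}(A_f)\leq 1$ is not a formal consequence, and your proposed bridge is left unbuilt. Two separate issues arise. First, the regularity of $\overline{A}_f$: this does in fact hold (since $J(S)A_f\subseteq J(A_f)$ and $A_f/J(S)A_f$ is a finite module over the semisimple Artinian ring $\overline{S}$, the quotient $\overline{A}_f$ is semisimple Artinian), but you neither prove it nor reduce to it correctly. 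Second, and more seriously, even with $\overline{A}_f$ semisimple the Auslander-type identity $\mathrm{wgld}(R)=\mathrm{fd}(R/J(R))$ is a theorem about semiprimary (or Noetherian semiperfect) rings; here $J(A_f)$ is not nilpotent and $A_f$ is not Noetherian when $V$ has positive rank, so ``the reduction of general modules to $\overline{A}_f$-modules'' that you invoke has no general justification. An arbitrary finitely generated right ideal $I$ of $A_f$ need not be related to $\overline{A}_f$ by any exact sequence that lets you transfer the flat-dimension bound. The paper avoids this entirely by quoting \cite[Theorem 2.7]{MMUZ}, whose proof uses the specific structure of orders in simple Artinian rings that are integral over a Pr\"ufer domain (via left orders of ideals and extremality), not a homological reduction to the radical quotient. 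To complete your argument you would need either to reprove that theorem or to cite it; as written, the key direction of (2) is asserted rather than proved.
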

 \begin{proof}
 Let $I=\sum_{i=1}^na_iS(G_1)$ be a finitely generated right ideal of $S(G_1)$. Let $S(G_1)^{(n)}$ denote the right
 $S(G_1)$-module which is the external direct sum of $n$ copies of $S(G_1)$, and consider the $S(G_1)$-epimorphism $$\phi:S(G_1)^{(n)}\mapsto I:(x_1,x_2,\ldots,x_n)\mapsto\sum_ia_ix_i.$$ We will show that $\ker\phi$ is finitely generated over $S(G_1)$.
  Observe that $S(G_1)$ is a free $S$-module of finite rank, $I$ is a finitely generated $S$-submodule, and $S$ is a semihereditary ring. So $I$ must be a projective $S$-module \cite[Proposition 4.30]{Rt}, and $\ker\phi$ a finitely generated $S$-module by Schanuel's Lemma. Hence $\ker\phi$ is a finitely generated $S(G_1)$-module as claimed. In the same manner, one can show that every finitely generated left ideal of $S(G_1)$ is finitely presented, hence $S(G_1)$ is a coherent ring. Part (2) now follows from \cite[Theorem 2.7]{MMUZ}; as for part (3), see for instance \cite[Example 8.20(iii)]{Rt}.
 \end{proof}

 If $h\in H$ then, since $f^h(h^{-1},h\sigma)f(h,\sigma)=f(h,h^{-1})$ and $f^{\sigma}(h,h^{-1})=f(\sigma,h)f(\sigma h,h^{-1})$, we see that $f(h,\sigma),f(\sigma,h)\in U(S)$ for every $\sigma\in G$ (cf. \cite[Lemma 3.5]{H}).

 \begin{lemma} \label{lem:Subgroup-lemma}

 Let $G_1$ be a subgroup of $G$ such that $G_1\subseteq H$. Then:
 \begin{enumerate}
    \item $\textrm{wgld(}S(G_1)\textrm{)}\leq \textrm{wgld(}A_f\textrm{)}$.
    \item If $A_f$ is semihereditary, then $S(G_1)$ is semihereditary.
    \item Let $L$ be the fixed field of $G_1$, and let $U$ be a valuation ring of $L$ lying over $V$. If $A_f$ is semihereditary, then $A_{f_{L,U}}$ is semihereditary.
    \end{enumerate}
 \end{lemma}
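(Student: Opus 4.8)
The plan is to exploit the hypothesis $G_1\subseteq H$ to realise $A:=A_f$ as a free module over $B:=S(G_1)$ on both sides, and then to push the bound on weak global dimension down from $A$ to $B$. The remark immediately preceding the lemma shows that $f(h,\sigma),f(\sigma,h)\in U(S)$ whenever $h\in G_1\subseteq H$ and $\sigma\in G$. Fixing right coset representatives $\{\tau_i\}$ and left coset representatives $\{\rho_j\}$ of $G_1$ in $G$ (with $\tau_1=\rho_1=1$), the computations $Sx_hx_{\tau_i}=Sf(h,\tau_i)x_{h\tau_i}=Sx_{h\tau_i}$ and $x_{\rho_j}Sx_h=Sf(\rho_j,h)x_{\rho_j h}=Sx_{\rho_j h}$ (valid because the relevant cocycle values are units and $\sigma(S)=S$ for all $\sigma\in G$) give $Bx_{\tau_i}=\sum_{g\in G_1\tau_i}Sx_g$ and $x_{\rho_j}B=\sum_{g\in\rho_jG_1}Sx_g$; hence $A=\bigoplus_iBx_{\tau_i}=\bigoplus_jx_{\rho_j}B$ is free as a left and as a right $B$-module of rank $[G:G_1]$. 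Grouping the homogeneous components $Sx_g$ according to the double cosets in $G_1\backslash G/G_1$ refines this to a $(B,B)$-bimodule decomposition $A=B\oplus D$, the summand $B$ being the double coset of the identity; here one again uses $G_1\subseteq H$ to check that each double-coset summand is stable under left and right multiplication by $B$.

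For part (1) I would first record that restriction of scalars along $B\subseteq A$ carries flat left $A$-modules to flat left $B$-modules: if $P$ is $A$-flat then for any right $B$-module $Y$ the identity $Y\otimes_BP\cong(Y\otimes_BA)\otimes_AP$ exhibits $Y\mapsto Y\otimes_BP$ as the composite of the exact functors $Y\mapsto Y\otimes_BA$ (exact because $A$ is $B$-flat) and $(-)\otimes_AP$ (exact because $P$ is $A$-flat). Next, for an arbitrary left $B$-module $N$, the bimodule splitting $A=B\oplus D$ makes the unit $N\to A\otimes_BN$, $n\mapsto 1\otimes n$, a split monomorphism of left $B$-modules, split by $\pi\otimes\textrm{id}_N$ where $\pi\colon A\to B$ is the bimodule projection; thus $N$ is a $B$-module direct summand of $A\otimes_BN$. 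Combining these two facts yields
\[ \textrm{fd}_B(N)\;\leq\;\textrm{fd}_B(A\otimes_BN)\;\leq\;\textrm{fd}_A(A\otimes_BN)\;\leq\;\textrm{wgld}(A), \]
where the middle inequality comes from restricting an $A$-flat resolution of $A\otimes_BN$, which stays $B$-flat by the first observation. Taking the supremum over all $N$ gives $\textrm{wgld}(S(G_1))\leq\textrm{wgld}(A_f)$.

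Part (2) is then immediate: by Lemma~\ref{lem:Coherent-lemma}(3) (applied with $G_1=G$) the hypothesis that $A_f$ is semihereditary forces $\textrm{wgld}(A_f)\leq 1$, whence $\textrm{wgld}(S(G_1))\leq 1$ by part (1), and Lemma~\ref{lem:Coherent-lemma}(3) again gives that $S(G_1)$ is semihereditary. For part (3) the idea is to identify $A_{f_{L,U}}$ with a central localization of $S(G_1)$. Writing $S_0=S\cap L$ for the integral closure of $V$ in $L$, one checks that $Z(S(G_1))=S_0$ and that $U$, being a valuation ring of $L$ lying over $V$, is the localization $(S_0)_{\mathfrak p}$ at the prime $\mathfrak p=J(U)\cap S_0$; since integral closure commutes with localization, the integral closure $T$ of $U$ in $K$ equals $S_{\mathfrak p}$, and therefore $A_{f_{L,U}}=\sum_{\sigma\in G_1}Tx_\sigma=(S(G_1))_{\mathfrak p}$ is the central localization of $S(G_1)$ at $S_0\setminus\mathfrak p$. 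As semiheredity is preserved under central localization (cf.\ \cite[Lemma 2.2]{M2}), part (2) shows that $A_{f_{L,U}}$ is semihereditary.

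The main obstacle is the homological core of part (1): one must extract the $(B,B)$-bimodule splitting $A=B\oplus D$ — which is exactly where $G_1\subseteq H$ is indispensable, since it guarantees that the relevant cocycle values are units and hence that the double-coset components are genuine sub-bimodules — and then keep the left/right bookkeeping straight so that $N$ really does split off $A\otimes_BN$ over $B$ while flatness descends along restriction. Once that structural input is in place, parts (2) and (3) follow formally from Lemma~\ref{lem:Coherent-lemma} and the local nature of semiheredity.
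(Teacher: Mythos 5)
Your argument is correct. For parts (1) and (2) you are following essentially the same route as the paper: the paper also establishes that $A_f$ is free over $S(G_1)$ via coset representatives (using $G_1\subseteq H$ to make the relevant cocycle values units) and that $S(G_1)$ is an $S(G_1)$-bimodule direct summand of $A_f$, and then simply cites \cite[Theorem 7.2.8(ii)]{MR} for the inequality $\textrm{wgld}(S(G_1))\leq\textrm{wgld}(A_f)$; your contribution is to unpack that citation into the explicit chain $\textrm{fd}_B(N)\leq\textrm{fd}_B(A\otimes_BN)\leq\textrm{fd}_A(A\otimes_BN)\leq\textrm{wgld}(A)$, which is a correct proof of the cited fact (and your double-coset grouping is a clean way to see the bimodule splitting $A=B\oplus D$; note the splitting itself needs only $f(G\times G)\subseteq S$, while the freeness is where $G_1\subseteq H$ enters). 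Part (3) is where you genuinely diverge: the paper observes $S(G_1)\subseteq A_{f_{L,U}}\subseteq\Sigma_{f_{L,U}}$ and invokes \cite[Lemma 4.10]{M}, which says that any order containing a semihereditary order in the same simple algebra is again semihereditary, whereas you identify $A_{f_{L,U}}$ as the central localization $(S(G_1))_{\mathfrak p}$ with $\mathfrak p=J(U)\cap S_0$ (valid because $S_0$ is a Pr\"ufer domain, so its valuation overrings are localizations, and integral closure commutes with localization) and use preservation of semiheredity under central localization. Both routes work; the paper's is shorter and does not require recognizing $U$ as a localization, while yours gives the sharper structural statement that $A_{f_{L,U}}$ is literally a localization of $S(G_1)$, which explains part (3) without any input beyond part (2).
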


 \begin{proof}
First, we will show that $A_f$ is a free right $S(G_1)$-module.
If $\sigma\in G_1$, let $S(\sigma G_1)=\sum_{\gamma\in G_1}Sx_{\sigma\gamma}$, a right $S(G_1)$-module. Since $G_1\subseteq H$, $f(\sigma,\gamma)\in U(S)$ for every $\gamma\in G_1$, hence $S(\sigma G_1)=x_{\sigma}S(G_1)$, a free $S(G_1)$-module with basis $\{x_{\sigma}\}$.  In particular, $A_f$ is a free right $S(G_1)$-module of finite rank: if $\sigma_1,\sigma_2,\ldots,\sigma_r$ are left coset representatives of $G_1$ in $G$, then $A_f=\sum_iS(\sigma_iG_1)$ (direct sum). Because $S(G_1)$ is an $S(G_1)$-bimodule direct summand of $A_f$, by \cite[Theorem 7.2.8(ii)]{MR} we conclude that $\textrm{wgld(}S(G_1)\textrm{)}\leq \textrm{wgld(}A_f\textrm{)}$ (cf. \cite[Lemma 2.1(ii)]{Y}). Therefore, if $A_f$ is semihereditary, then $S(G_1)$ is also semihereditary by Lemma~\ref{lem:Coherent-lemma}. Since $S(G_1)\subseteq A_{f_{L,U}}\subseteq \Sigma_{f_{L,U}}$, if $S(G_1)$ is semihereditary, so is
 $A_{f_{L,U}}$, by \cite[Lemma 4.10]{M}.
  \end{proof}

Recall that $J(V)$ is not a principal ideal of $V$ if and only if $J(V)^2=J(V)$. When this happens, $M^2=M$ for every maximal ideal $M$ of $S$.

For a $\sigma\in G$, here and elsewhere we will let $I_{\sigma}=\bigcap M$, where the intersection is taken over those maximal ideals $M$ of $S$ for which $f(\sigma,\sigma^{-1})\not\in M$.

\begin{lemma} \label{lem:the-ideal-lemma} (cf. \cite[Lemma 2.4]{K6}) Given a $\sigma\in G$, then:
\begin{enumerate}
    \item $I_{\sigma}=\{x\in S\mid xf(\sigma,\sigma^{-1})\in
    J(S)\}$.
    \item $\sigma^{-1}(I_{\sigma})=I_{\sigma^{-1}}$.
    \item If $f(\sigma,\sigma^{-1})\not\in M^2$ for every maximal
    ideal $M$ of $S$, then $I_{\sigma}f(\sigma,\sigma^{-1})=J(S)$.
    \item $\sum_{\sigma\in G} I_{\sigma}x_{\sigma}$ is an ideal of $A_f$.
    \item $I_{\sigma}\subseteq J(S)$ if and only if $\sigma\in H$. When this occurs,
    $I_{\sigma}= J(S)$.
\end{enumerate}
 \end{lemma}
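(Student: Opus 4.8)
The plan is to handle the five parts with a common toolkit and then isolate the single genuinely delicate computation. Throughout I will use that $S$, being the integral closure of the valuation ring $V$ in the finite extension $K$, is a semilocal Pr\"{u}fer domain: it has finitely many maximal ideals $M_1,\dots,M_r$, each localization $S_M$ is a valuation ring, $J(S)=\bigcap_i M_i$, and $G$ permutes the $M_i$. I will also record the normalization $f(\rho,1)=f(1,\rho)=1$ together with a few instances of the cocycle identity $\alpha(f(\beta,\gamma))f(\alpha,\beta\gamma)=f(\alpha,\beta)f(\alpha\beta,\gamma)$; in particular $(\alpha,\beta,\gamma)=(\sigma,\sigma^{-1},\sigma)$ gives $\sigma(f(\sigma^{-1},\sigma))=f(\sigma,\sigma^{-1})$, the bridge between $\sigma$ and $\sigma^{-1}$.

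For part (1), write $a=f(\sigma,\sigma^{-1})$. Since $J(S)=\bigcap_M M$ and each $M$ is prime, $xa\in J(S)$ holds iff $xa\in M$ for every $M$, i.e. iff $x\in M$ for every $M$ with $a\notin M$; this last intersection is exactly $I_\sigma$. Part (5) is of the same elementary flavour. As $I_\sigma$ is an intersection of \emph{some} of the maximal ideals, one always has $I_\sigma\supseteq J(S)$, with equality precisely when the index set is all of $\{M_1,\dots,M_r\}$, i.e. when $a\notin M$ for every $M$, i.e. when $a\in U(S)$, i.e. when $\sigma\in H$; this gives the ``when this occurs'' clause and the implication $\sigma\in H\Rightarrow I_\sigma=J(S)\subseteq J(S)$. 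Conversely, if $\sigma\notin H$ then $a\in M_0$ for some $M_0$, so $M_0$ is dropped and $I_\sigma\supseteq\bigcap_{M\neq M_0}M$; since the maximal ideals are pairwise comaximal, $\bigcap_{M\neq M_0}M\not\subseteq M_0$ (else $M_0$ would contain a factor $M\neq M_0$), so $I_\sigma$ contains an element outside $J(S)$ and $I_\sigma\not\subseteq J(S)$.

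Part (2) combines the two structural facts: applying $\sigma^{-1}$ to $I_\sigma=\bigcap_{a\notin M}M$ and substituting $N=\sigma^{-1}(M)$ turns $f(\sigma,\sigma^{-1})\notin M$ into $\sigma^{-1}(f(\sigma,\sigma^{-1}))\notin N$, and the bridge identity rewrites $\sigma^{-1}(f(\sigma,\sigma^{-1}))=f(\sigma^{-1},\sigma)$; as $M$ runs over all maximal ideals so does $N$, so the result is exactly $I_{\sigma^{-1}}$. Part (3) is where the hypothesis does real work, and I would argue locally. Part (1) gives $I_\sigma a\subseteq J(S)$ for free, so only $J(S)\subseteq I_\sigma a$ needs the hypothesis, and it suffices to check $(I_\sigma a)_M=(J(S))_M=MS_M$ at each $M$. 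If $a\notin M$ then $a\in U(S_M)$ and $(I_\sigma)_M=MS_M$, settling this case. If $a\in M$, the hypothesis $a\notin M^2$ localizes to $a\in MS_M\setminus(MS_M)^2$ in the valuation ring $S_M$; the value-group argument (were some $b\in MS_M$ to satisfy $v(b)<v(a)$, then $v(a)=v(b)+(v(a)-v(b))$ would exhibit $a\in(MS_M)^2$) forces $MS_M=aS_M$, while $(I_\sigma)_M=S_M$ since $M$ is absent from the intersection; hence $(I_\sigma a)_M=aS_M=MS_M$. The local-global principle for ideals of the domain $S$ then yields $I_\sigma a=J(S)$.

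The main obstacle is part (4), and specifically the \emph{left}-ideal property. Using part (1), closure reduces to two coefficient computations. For right multiplication, $(cx_\sigma)(sx_\tau)=c\sigma(s)f(\sigma,\tau)x_{\sigma\tau}$, and the single identity $f(\sigma,\tau)f(\sigma\tau,(\sigma\tau)^{-1})=\sigma(f(\tau,\tau^{-1}\sigma^{-1}))f(\sigma,\sigma^{-1})$ rewrites $c\sigma(s)f(\sigma,\tau)f(\sigma\tau,(\sigma\tau)^{-1})=\sigma\!\bigl(sf(\tau,\tau^{-1}\sigma^{-1})\bigr)\,cf(\sigma,\sigma^{-1})\in J(S)$, so $c\sigma(s)f(\sigma,\tau)\in I_{\sigma\tau}$. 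Left multiplication is harder: since $I_{\tau\sigma}$ is an ideal of $S$ it suffices to show $\tau(c)f(\tau,\sigma)\in I_{\tau\sigma}$, and after using $f(\tau,\sigma)f(\tau\sigma,(\tau\sigma)^{-1})=\tau(f(\sigma,\sigma^{-1}\tau^{-1}))f(\tau,\tau^{-1})$ and applying $\tau^{-1}$, the problem becomes $f(\tau^{-1},\tau)\,c\,f(\sigma,\sigma^{-1}\tau^{-1})\in J(S)$ for $c\in I_\sigma$. I would verify this one maximal ideal $M$ at a time: it is automatic if $c\in M$, and if $c\notin M$ then $f(\sigma,\sigma^{-1})\in M$, whence the factorization $f(\sigma,\sigma^{-1})=\sigma(f(\sigma^{-1},\tau^{-1}))f(\sigma,\sigma^{-1}\tau^{-1})$ throws one factor into $M$. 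If that factor is $f(\sigma,\sigma^{-1}\tau^{-1})$ we are done; the delicate case $\sigma(f(\sigma^{-1},\tau^{-1}))\in M$ is resolved by the further identity $f(\tau^{-1},\tau)=\sigma(f(\sigma^{-1},\tau^{-1}))\,\sigma(f((\tau\sigma)^{-1},\tau))$, which shows $\sigma(f(\sigma^{-1},\tau^{-1}))$ divides $f(\tau^{-1},\tau)$ and hence $f(\tau^{-1},\tau)\in M$. Pinning down exactly this chain of cocycle identities, rather than any single routine one, is the step that takes care.
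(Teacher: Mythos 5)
Your proof is correct and follows essentially the same route as the paper: parts (1) and (5) by the same prime-avoidance observations, part (3) by localizing at each maximal ideal of $S$, and parts (2) and (4) by exactly the cocycle manipulations that the paper outsources to the Sublemma of \cite{K5} and to the argument of \cite[Proposition 3.1]{H}, respectively (all of your identities check out, including the two-step factorization resolving the delicate case of left multiplication). The only cosmetic difference is that your value-group argument in (3) treats the principal and non-principal cases of $J(V)$ uniformly, where the paper splits into two cases.
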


 \begin{proof} Let $x\in S$. Clearly, if $x\in I_{\sigma}$ then $xf(\sigma,\sigma^{-1})\in
 J(S)$. On the other hand, if $x\not\in I_{\sigma}$ then there
 exists a maximal ideal $M$ of $S$ such that
 $x,f(\sigma,\sigma^{-1})\not\in M$, hence
 $xf(\sigma,\sigma^{-1})\not\in M$, and thus $xf(\sigma,\sigma^{-1})\not\in J(S)$.

 The second statement is proved in the same manner as
 \cite[Sublemma]{K5}. To see that the third statement holds,
we note that $I_{\sigma}f(\sigma,\sigma^{-1})\subseteq J(S)$. We
claim that $I_{\sigma}f(\sigma,\sigma^{-1})= J(S)$. If $J(V)$ is not a principal
ideal of $V$ then the assumption implies that $f(\sigma,\sigma^{-1})\in U(S)$, hence the conclusion holds.
To see that the conclusion also holds when $J(V)$ is a principal ideal of $V$,
let $M$ be a maximal ideal of $S$. If $f(\sigma,\sigma^{-1})\not\in
M$, then $(I_{\sigma}f(\sigma,\sigma^{-1}))S_M=J(S_M)=J(S)S_M$.
On the other hand, if $f(\sigma,\sigma^{-1})\in M$ then, since
$f(\sigma,\sigma^{-1})\not\in M^2$, we have $J(S_M)^2 \subsetneqq
I_{\sigma}f(\sigma,\sigma^{-1})S_M\subseteq J(S_M)$, hence
$I_{\sigma}f(\sigma,\sigma^{-1})S_M=J(S_M)=J(S)S_M$, and thus
$I_{\sigma}f(\sigma,\sigma^{-1})= J(S)$.

The argument in \cite[Proposition 3.1]{H} now shows that indeed,
$\sum_{\sigma\in G} I_{\sigma}x_{\sigma}$ is an ideal of $A_f$.

Finally, observe that $I_{\sigma}\subseteq J(S)$ implies that $I_{\sigma}\subseteq M$ for every maximal ideal $M$ of $S$, and hence $f(\sigma,\sigma^{-1})\in U(S)$. The converse also holds.
\end{proof}

Let $J_G(A_f)$ denote the \textit{graded Jacobson radical} of $A_f$ (see \cite{P} for the definition). By \cite[Theorem 4.12]{P}, we always have $J(V)A_f\subseteq J(S)A_f\subseteq J_G(A_f)\subseteq J(A_f)$. We are now going to give an explicit description of $J_G(A_f)$.

\begin{proposition} \label{prop:The-Graded-Radical-Structure}
 We always have $J_G(A_f)=\sum_{\sigma\in G}I_{\sigma}x_{\sigma}$.
 \end{proposition}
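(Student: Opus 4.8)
The plan is to prove the two inclusions separately, writing $N=\sum_{\sigma\in G}I_{\sigma}x_{\sigma}$, which is a graded ideal of $A_f$ by Lemma~\ref{lem:the-ideal-lemma}(4). I will use that $J_G(A_f)$ is itself a graded ideal and that, by the theory of the graded radical (see \cite{P}), $J_G(A_f)\cap S$ is the largest $G$-invariant ideal of $S$ contained in $J(S)$. Since $J(S)=\bigcap M$ is visibly $G$-invariant, this yields $J_G(A_f)\cap S=J(S)$; the containment $\supseteq$ also follows at once from the given $J(S)A_f\subseteq J_G(A_f)$ together with $(J(S)A_f)\cap S=J(S)$. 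Note too that $J(S)\subseteq I_{\sigma}$ for every $\sigma$, so that $J(S)A_f\subseteq N$.

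For the inclusion $J_G(A_f)\subseteq N$, I would write the graded ideal $J_G(A_f)=\sum_{\sigma}C_{\sigma}x_{\sigma}$, each $C_{\sigma}$ an $S$-submodule of $S$. Fixing $\sigma$ and $c\in C_{\sigma}$, the element $cx_{\sigma}$ lies in $J_G(A_f)$, hence so does $(cx_{\sigma})x_{\sigma^{-1}}=cf(\sigma,\sigma^{-1})x_1$; being homogeneous of degree $1$, it lies in $J_G(A_f)\cap S=J(S)$. Thus $cf(\sigma,\sigma^{-1})\in J(S)$, and Lemma~\ref{lem:the-ideal-lemma}(1) gives $c\in I_{\sigma}$. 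Hence $C_{\sigma}\subseteq I_{\sigma}$ for all $\sigma$, so $J_G(A_f)\subseteq N$. This direction is routine once $J_G(A_f)\cap S=J(S)$ is in hand.

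For the reverse inclusion $N\subseteq J_G(A_f)$, I would pass to the quotient $\overline{A_f}=A_f/J(S)A_f$. Since $J(S)A_f$ is a graded ideal contained in $J_G(A_f)$, one has $J_G(\overline{A_f})=J_G(A_f)/J(S)A_f$, so it suffices to show $\overline{N}:=N/J(S)A_f=\sum_{\sigma}\overline{I_{\sigma}}\,x_{\sigma}\subseteq J_G(\overline{A_f})$. Here $\overline{A_f}$ is a finite-dimensional $G$-graded algebra over the residue field $\overline{V}$, with semisimple identity component $\overline{S}=S/J(S)$, a finite direct product of fields, and $\overline{I_{\sigma}}=0$ precisely when $\sigma\in H$ by Lemma~\ref{lem:the-ideal-lemma}(5). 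I would then show that $\overline{N}$ is a \emph{nilpotent} graded ideal; since a nilpotent graded ideal annihilates every graded-simple module, it is contained in $J_G(\overline{A_f})$, which completes the proof.

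The nilpotence of $\overline{N}$ is the main obstacle. The difficulty is that the automorphisms $\sigma$ permute the maximal ideals of $S$, so the factor $\overline{I_{\sigma}}$ (supported on those $M$ with $f(\sigma,\sigma^{-1})\in M$) is spread across a $G$-orbit under multiplication, and a naive power computation does not obviously vanish. My approach is to split $\overline{A_f}$ along the $G$-orbits of maximal ideals, using the primitive idempotents $e_M$ of $\overline{S}$: for an orbit $\mathcal{O}$ the sum $\sum_{M\in\mathcal{O}}e_M$ is $G$-invariant, hence central in $\overline{A_f}$, and the corresponding decomposition reduces us to the case in which $V$ is indecomposed in $K$. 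In that case the argument of Haile \cite[Proposition 3.1]{H} applies: the relation $I_{\sigma}f(\sigma,\sigma^{-1})\subseteq J(S)$ forces products of homogeneous components of $\overline{N}$ to ascend strictly in the finite partial order $\textrm{Gr}(f)$ on the coset space, so a sufficiently long product is pushed into degrees that have already collapsed to zero, giving $\overline{N}^{k}=0$ for some $k$. Reassembling the orbits yields nilpotence of $\overline{N}$ over all of $\overline{S}$.
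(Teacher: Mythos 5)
Your overall architecture (reduce to showing $N=\sum_{\sigma}I_{\sigma}x_{\sigma}$ is the largest graded ideal inside $J(A_f)$, the hard half being that $\overline{N}=N/J(S)A_f$ is nilpotent) is the same as the paper's, which simply asserts that Haile's argument adapts and then cites Passman; your first inclusion $J_G(A_f)\subseteq N$ is correct (for the needed fact $J_G(A_f)\cap S\subseteq J(S)$ it is safer to argue directly from $J_G(A_f)\subseteq J(A_f)$ and $A_f\cap K=S$ than to invoke a "largest $G$-invariant ideal" description, since $A_f$ is not a crossed product in Passman's strong sense). The genuine gap is exactly in the step you flag as the main obstacle. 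Your reduction to the indecomposed case by "splitting $\overline{A_f}$ along the $G$-orbits of maximal ideals" is vacuous: since $K/F$ is Galois and $S$ is the integral closure of the valuation ring $V$, the group $G$ acts \emph{transitively} on the maximal ideals of $S$ (all extensions of $V$ to $K$ are conjugate). There is only one orbit, so your central idempotent $\sum_{M\in\mathcal{O}}e_M$ equals $1$, nothing decomposes, and you are not reduced to the indecomposed case. The difficulty you correctly identified --- that the $x_{\sigma}$ permute the primitive idempotents of $\overline{S}$ --- is therefore left unresolved.

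The repair needs no reduction: work with the individual, non-central, primitive idempotents $e_M$ and the preorders $\leq_M$, i.e.\ with $\textrm{Gr}(f^M)$ rather than $\textrm{Gr}(f)$. For a homogeneous product $e_M\,\overline{a}_1x_{\sigma_1}\cdots\overline{a}_kx_{\sigma_k}$ with $a_j\in I_{\sigma_j}$ to survive modulo $J(S)A_f$ one needs, for each $j$, that $f(\sigma_1\cdots\sigma_{j-1},\sigma_j)\not\in M$ (giving $\sigma_1\cdots\sigma_{j-1}H\leq_M\sigma_1\cdots\sigma_jH$) and that $I_{\sigma_j}\not\subseteq(\sigma_1\cdots\sigma_{j-1})^{-1}(M)$, i.e.\ $f^{\sigma_1\cdots\sigma_{j-1}}(\sigma_j,\sigma_j^{-1})\in M$. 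Since $f^{\alpha}(\beta,\beta^{-1})=f(\alpha,\beta)f(\alpha\beta,\beta^{-1})$, the two conditions force $f(\sigma_1\cdots\sigma_j,(\sigma_1\cdots\sigma_j)^{-1}\sigma_1\cdots\sigma_{j-1})\in M$, which rules out $\sigma_1\cdots\sigma_jH\leq_M\sigma_1\cdots\sigma_{j-1}H$. Hence $[1]_M<_M[\sigma_1]_M<_M[\sigma_1\sigma_2]_M<_M\cdots$ is a strictly increasing chain in the finite poset $\textrm{Gr}(f^M)$, so $e_M\overline{N}^{\,k}=0$ once $k$ exceeds the longest chain length, and summing over the $M$ gives $\overline{N}^{\,k}=0$. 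With this substitution for your orbit argument, the rest of your proposal goes through and matches what the paper means by "Haile's argument is easily adaptable".
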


 \begin{proof}
 Let $I=\sum_{\sigma\in G}I_{\sigma}x_{\sigma}$, an ideal of $A_f$ by Lemma~\ref{lem:the-ideal-lemma}(4) . The argument by Haile in \cite[Proposition 3.1(b)]{H} is easily
 adaptable to this situation to establish that $I$ is the largest $G$-graded ideal of $A_f$
 contained in $J(A_f)$. Hence by \cite[Theorem 4.12]{P}, $I=J_G(A_f)$.
 \end{proof}

  \begin{theorem}  \label{thm:The-Azumaya-Theorem} The crossed-product order $A_f$ is Azumaya over $V$ if and only if $H=G$, and $K/F$ is unramified and defectless.
  \end{theorem}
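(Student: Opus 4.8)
The plan is to force the subgroup $H$ to be all of $G$ and then reduce the statement to the classical theory of crossed products over Galois extensions of commutative rings. Throughout I use that $V$ is local, so that $A_f$ is Azumaya over $V$ precisely when it is a finitely generated free $V$-module and the residue algebra $\overline{A_f}=A_f/J(V)A_f$ is central simple over $\overline{V}$. The two external ingredients I will lean on are: (i) Kanzaki's correspondence, namely that for a commutative extension $S/V$ with $G$ acting and $V=S^G$, together with a cocycle $f$ valued in $U(S)$, the crossed product of $G$ over $S$ is Azumaya over $V$ with $S$ a maximal commutative subring if and only if $S/V$ is a Galois extension with group $G$; and (ii) the valuation-theoretic fact that $S/V$ is $G$-Galois exactly when $K/F$ is unramified and defectless, i.e.\ (by the characterization recalled from \cite{K3}) exactly when $|G^T|=1$.

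For the forward implication, suppose $A_f$ is Azumaya over $V$. Then $\overline{A_f}$ is central simple, hence semisimple, so $J(A_f)\subseteq J(V)A_f$; combined with the always-valid chain $J(V)A_f\subseteq J(S)A_f\subseteq J_G(A_f)\subseteq J(A_f)$ this collapses everything, and Proposition~\ref{prop:The-Graded-Radical-Structure} gives $\sum_{\sigma}I_{\sigma}x_{\sigma}=\sum_{\sigma}J(V)Sx_{\sigma}$, whence $I_{\sigma}=J(V)S$ for every $\sigma$. Since $J(V)S\subseteq J(S)$, Lemma~\ref{lem:the-ideal-lemma}(5) forces $\sigma\in H$ for all $\sigma$, that is $H=G$; the same part of the lemma also yields $J(S)=J(V)S$, so the ramification index is trivial. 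Because $H=G$, the remark preceding Lemma~\ref{lem:the-ideal-lemma} shows $f$ takes all its values in $U(S)$, so $A_f$ is an honest crossed product of $G$ over $S$, with $S$ maximal commutative (as $G$ acts faithfully on $S$) and $V=S^G$. Kanzaki's theorem then makes $S/V$ a $G$-Galois extension, and (ii) converts this into ``$K/F$ unramified and defectless.''

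For the converse, assume $H=G$ and $K/F$ unramified and defectless. The hypothesis $H=G$ again gives, via the remark preceding Lemma~\ref{lem:the-ideal-lemma}, that $f$ is $U(S)$-valued, so $A_f$ is a genuine crossed product of $G$ over $S$ with $S$ maximal commutative and $V=S^G$. By (ii) the extension $S/V$ is $G$-Galois, and then Kanzaki's theorem in the direction producing Azumaya algebras from Galois data shows $A_f$ is Azumaya over $V$.

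The main obstacle is (ii): translating ``$A_f$ Azumaya'' into the exact valuation-theoretic conditions, and in particular ruling out inseparable residue extensions and defect. Defectlessness comes for free, since an Azumaya algebra is finitely generated over $V$, forcing $S$ to be module-finite over $V$ and hence $K/F$ defectless; and the equality of ramification index to $1$ falls out of the radical computation above. The delicate point is separability of the residue extension: one must show that if some $\overline{S}_M/\overline{V}$ were inseparable then $\overline{A_f}$ could not be central simple. If one prefers to avoid citing Kanzaki, this can be carried out directly at the residue level: with trivial ramification index and defectlessness, $\overline{A_f}$ is the crossed product of $G$ over the reduced algebra $\overline{S}=\prod_M \overline{S}_M$ of $\overline{V}$-dimension $|G|$, and a nontrivial inertia group $G^T$, which acts trivially on the relevant residue field, produces via a twisted-group-algebra-of-a-$p$-group argument together with Clifford theory a nonzero nilpotent ideal in $\overline{A_f}$; this contradicts semisimplicity. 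Either route pins down $|G^T|=1$ and completes the proof.
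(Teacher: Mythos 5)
Your first step coincides with the paper's: from $J(A_f)=J(V)A_f$ and Proposition~\ref{prop:The-Graded-Radical-Structure} you get $I_\sigma=J(V)S\subseteq J(S)$ for all $\sigma$, hence $H=G$ by Lemma~\ref{lem:the-ideal-lemma}(5). At that point the paper simply invokes \cite[Theorem 3]{K3}, which is exactly the statement ``for a unit-valued cocycle, $A_f$ is Azumaya over $V$ iff $K/F$ is unramified and defectless''; you instead try to reprove that ingredient via Kanzaki's correspondence and a Galois-theoretic translation. Two of your substitute steps do not hold as stated. First, the inference ``$J(S)=J(V)S$, so the ramification index is trivial'' is false for valuation rings of arbitrary rank: if $\Gamma_V$ is dense (e.g.\ $\Gamma_V=\mathbb{Z}[1/3]\subseteq\Gamma_{S_M}=\tfrac12\mathbb{Z}[1/3]$ inside $\mathbb{R}$), then every positive element of $\Gamma_{S_M}$ dominates a positive element of $\Gamma_V$, so $J(S_M)=J(V)S_M$ even though $e(S_M\!\mid\!F)=2$. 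Since the paper explicitly allows arbitrary Krull dimension, this step would fail. The fix is a rank count you never make: $A_f$ Azumaya over the local ring $V$ is free of rank $[\Sigma_f:F]=|G|^2$, so $S$ is free of rank $|G|$ and $\dim_{\overline{V}}(S/J(V)S)=|G|$; combined with $J(V)S=J(S)$ this gives $r[\overline{S}_M:\overline{V}]=|G|=e(S_M\!\mid\!F)[\overline{S}_M:\overline{V}]r\cdot d$, forcing $e=1$ and no defect.

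Second, the ``only if'' direction of the correspondence you cite --- ``crossed product Azumaya with $S$ maximal commutative $\Rightarrow$ $S/V$ is $G$-Galois'' --- is not a theorem without a separability hypothesis on $S$: a maximal commutative subring of an Azumaya algebra need not be separable (e.g.\ $V[x]/(x^2)$ sits as the centralizer of a nilpotent inside $M_2(V)$). Since separability of $\overline{S}_M/\overline{V}$ is precisely one of the things to be proved, you cannot import it this way. Your fallback sketch in the last paragraph (a nontrivial $G^T$ acting trivially on $\overline{S}_M$ yields a twisted group algebra of a $p$-group in characteristic $p$, hence a nonzero nilpotent ideal in $\overline{A}_f$) is the right idea and is essentially what \cite[Theorem 3]{K3} rests on, but as written it leans on the flawed ramification-index claim above to reduce to the residue level. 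The converse direction of your argument ($H=G$ plus unramified and defectless $\Rightarrow$ Azumaya via Kanzaki) is fine. In short: same opening move as the paper, but the replacement for the cited \cite[Theorem 3]{K3} has a genuine gap in the non-discrete case and an unjustified appeal in the separability step.
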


  \begin{proof}
  Suppose $A_f$ is Azumaya over $V$. Then $J(A_f)=J(V)A_f$, hence $J_G(A_f)=J(V)A_f$ and $I_{\sigma}=J(V)S\subseteq J(S)$ for every $\sigma\in G$.  This shows that $H=G$ by Lemma~\ref{lem:the-ideal-lemma}(5). The theorem now follows from \cite[Theorem 3]{K3}.
  \end{proof}

  \begin{theorem} \label{thm:The-Main-Nonprincipal-Theorem}
  Suppose $J(V)$ is not a principal ideal of $V$.
  \begin{enumerate}
    \item The crossed-product order $A_f$ is an extremal $V$-order precisely when it is a maximal $V$-order.
    \item The following are equivalent:
    \begin{enumerate}
        \item $A_f$ is semihereditary.
        \item $A_f$ is a semihereditary maximal $V$-order.
        \item $J(A_f)=J(V)A_f$.
        \end{enumerate}
    When this happens, $H=G$ and $J(A_f)=J_G(A_f)$.
    \item Suppose $A_f$ is
        semihereditary. Then $A_{f_{L,U}}$ is a semihereditary order in
        $\Sigma_{f_{L,U}}$ for each intermediate field $L$ of $F$ and $K$,
and every valuation ring $U$ of $L$ lying over $V$.
    \item $A_f$ is a valuation ring of $\Sigma_f$ if and only if $J(V)A_f$ is a maximal ideal of $A_f$.
    \item Suppose $K/F$ is tamely ramified and defectless. Then $A_f$ is semihereditary if and only if $H=G$, if and only if $f(\sigma,\tau)\not\in M^2$ for every maximal ideal $M$ of $S$ and every $\sigma,\tau\in G$.
    \item If $\overline{V}$ is a perfect field, then $A_f$ is semihereditary if and only if $H=G$ and $K/F$ is
           tamely ramified and defectless.
    \end{enumerate}
  \end{theorem}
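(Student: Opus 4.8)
The plan is to reduce every assertion to the structure of the finite-dimensional $\overline{V}$-algebra $\overline{A}:=A_f/J(V)A_f=A_f\otimes_V\overline{V}$, exploiting that $J(V)^2=J(V)$ makes $\overline{V}$ behave like a localization of $V$. First I would record the two consequences of non-principality that drive everything. Since $V$ is a valuation (hence Pr\"ufer) ring and $J(V)$ is idempotent, $J(V)$ is a pure ideal, so $\overline{V}=V/J(V)$ is a \emph{flat} $V$-module and $V\to\overline{V}$ is a flat ring epimorphism; consequently its cobase change $A_f\to\overline{A}$ is a flat ring epimorphism as well. Second, because $M^2=M$ for every maximal ideal $M$ of $S$ and each ramification index $e(S_M\mid F)$ is finite, $\Gamma_V$ is coinitial in the positive cone of every $\Gamma_{S_M}$, whence $J(V)S=J(S)$ and $\overline{S}:=S/J(V)S=\prod_M\overline{S}_M$ is semisimple. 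Thus $\overline{A}=\sum_\sigma\overline{S}\,x_\sigma$ is the weak crossed product $\overline{S}*_{\overline{f}}G$ over a semisimple ring, and since $\overline{A}$ is Artinian, $J(\overline{A})=J(A_f)/J(V)A_f$. In particular condition (c), $J(A_f)=J(V)A_f$, is \emph{equivalent to $\overline{A}$ being semisimple}; and once (c) holds, the inclusions $J(V)A_f\subseteq J_G(A_f)\subseteq J(A_f)$ together with Lemma~\ref{lem:the-ideal-lemma}(5) force every $I_\sigma=J(S)$, i.e.\ $H=G$ and $J(A_f)=J_G(A_f)$.

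The easy half of (2) I would dispatch first. Trivially (b)$\Rightarrow$(a). For (c)$\Rightarrow$(a): writing $J(V)$ as the directed union $\bigcup_{a\in J(V)}aV$ of principal ideals exhibits $J(V)A_f$ as a direct limit of free $A_f$-modules, hence flat, so Lemma~\ref{lem:Coherent-lemma}(2) gives that $A_f$ is semihereditary. For (c)$\Rightarrow$(b) I would pass through part (1): once $J(A_f)=J(V)A_f$ I must see $A_f$ is maximal, and I would deduce this from the general fact that a $V$-order whose radical is extended from the idempotent ideal $J(V)$ admits no proper over-order of the same radical. This is also how I would prove (1): maximal$\Rightarrow$extremal is immediate, while for the converse the idempotence $M^2=M$ is exactly what prevents a radical-preserving proper over-order, so an extremal $A_f$ satisfies $J(A_f)=J(V)A_f$ and is then maximal by (2). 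Part (4) is formal: $A_f$ is a Dubrovin valuation ring iff it is semihereditary and primary, i.e.\ iff $J(A_f)=J(V)A_f$ is a maximal ideal, which is the stated criterion.

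The heart of the theorem is the remaining implication (a)$\Rightarrow$(c), and this is where I expect the real work. Because $A_f\to\overline{A}$ is a flat epimorphism, $\mathrm{Tor}$ over $\overline{A}$ agrees with $\mathrm{Tor}$ over $A_f$ on $\overline{A}$-modules, so $\textrm{wgld}(\overline{A})\le\textrm{wgld}(A_f)\le1$ by Lemma~\ref{lem:Coherent-lemma}(3); moreover flatness of $J(A_f)$ together with the identity $J(V)J(A_f)=J(V)A_f$ (using $J(V)^2=J(V)$) identifies $J(\overline{A})$ with $J(A_f)\otimes_{A_f}\overline{A}$, a projective $\overline{A}$-module. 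Thus $\overline{A}$ is a \emph{hereditary} Artinian algebra. The obstacle is that hereditary Artinian algebras need not be semisimple (triangular matrix algebras are the standard counterexample), so the weak global dimension bound alone cannot finish the argument; I must use the grading to kill $J(\overline{A})$ outright. The plan here is two steps: (i) show $H=G$, so that $\overline{A}=\overline{S}*_{\overline{f}}G$ is a \emph{genuine} crossed product rather than a weak one, and (ii) observe that such a crossed product of a finite group over the semisimple commutative ring $\overline{S}$ is a Frobenius extension of $\overline{S}$, hence a self-injective $\overline{V}$-algebra; a self-injective hereditary Artinian algebra is semisimple, yielding (c). Step (i) is the subtle point: a nonzero graded radical $J_G(\overline{A})=\bigoplus_{\sigma\notin H}\overline{I}_\sigma x_\sigma$ would be a nonzero nilpotent graded ideal, and I would rule this out using the projectivity of $J(\overline{A})$ and Lemma~\ref{lem:Subgroup-lemma} applied to $G_1=H$ (so $S(H)$ is semihereditary and $A_f$ is free over it), reducing the weak part to a configuration the hereditary hypothesis forbids.

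Finally I would read off parts (5) and (6) from the now-explicit $\overline{A}=\overline{S}*_{\overline{f}}G$. Decomposing $\overline{S}=\prod_M\overline{S}_M$ under the transitive $G$-action on the maximal ideals reduces $\overline{A}$, up to Morita equivalence, to $\overline{S}_M*_{\overline{f}}G^Z$; since $G^T$ acts trivially on $\overline{S}_M$, the relevant obstruction to semisimplicity is the twisted group algebra $\overline{S}_M^{\,\overline{f}}[G^T]$, which by Maschke is semisimple exactly when $\overline{p}\nmid|G^T|$, that is (by \cite[Lemma~1]{K3}) exactly when $K/F$ is tamely ramified and defectless. Under that hypothesis (part 5), (c) reduces to $H=G$, and the reformulation ``$f(\sigma,\tau)\notin M^2$ for all $M,\sigma,\tau$'' is just the restatement of $H=G$ via $M^2=M$ and the remark that $h\in H$ makes all $f(h,\sigma),f(\sigma,h)$ units. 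For part (6), perfectness of $\overline{V}$ makes every residue extension separable, so ``tamely ramified and defectless'' coincides with the condition extracted above, and combining parts (2) and (5) gives the equivalence. The single genuinely hard step remains (a)$\Rightarrow$(c), and within it the verification that semiheredity forces $H=G$.
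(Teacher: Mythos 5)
Your proposal has genuine gaps at the two places where the real work happens. First, the foundational claim that idempotence of $J(V)$ makes $\overline{V}=V/J(V)$ a flat $V$-module is false: $V$ is a domain and every nonzero element of $J(V)$ annihilates $\overline{V}$, so $\overline{V}$ is a torsion module and cannot be flat (an idempotent ideal of a domain is never pure unless it is zero). This removes the ``flat ring epimorphism'' framework and with it your justification that $\mathrm{wgld}(\overline{A})\leq\mathrm{wgld}(A_f)$. The weaker fact you actually need --- that $J(\overline{A})\cong J(A_f)\otimes_{A_f}\overline{A}$ is projective over the Artinian ring $\overline{A}$ --- survives, since base change of a flat module along \emph{any} ring map is flat; but, as you yourself note, hereditary Artinian is not enough. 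Second, and decisively, your route to (a)$\Rightarrow$(c) hinges on showing that semiheredity forces $H=G$, and at exactly this point you offer only the phrase ``a configuration the hereditary hypothesis forbids.'' That is not an argument, and it is the entire content of the hard direction. The paper does not prove this from scratch either: it quotes \cite[Theorem 1.5]{K1} (semihereditary $\Rightarrow$ extremal) and \cite[Corollary 1]{K2} (a semihereditary maximal order over $V$ with $J(V)$ non-principal has radical $J(V)A_f$). Your Frobenius-extension/self-injectivity idea for step (ii) is attractive and would work once $H=G$ is known, but step (i) is missing.

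Your treatment of part (1) and of (c)$\Rightarrow$(b) is also broken. You assert that an extremal $A_f$ satisfies $J(A_f)=J(V)A_f$; by part (2) that would make every extremal (hence every maximal) crossed-product order semihereditary, which is essentially the open question the author raises after Corollary~\ref{cor:Second-Maximal-result} --- it is not available to you. Moreover your ``general fact'' (an order whose radical is $J(V)A_f$ admits no radical-preserving proper over-order) is, in the idempotent case, literally equivalent to maximality, because every over-order $B$ automatically satisfies $J(B)\supseteq J(V)B\supseteq J(V)A_f$; invoking it is circular. The missing ingredient is \cite[Proposition 1.4]{K1}: for an \emph{extremal} order, every over-order $B$ has $J(B)\subseteq A_f$. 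From this the paper gets, for $t=\sum_{\sigma}a_{\sigma}x_{\sigma}\in B$, that $a_{\sigma}J(V)\subseteq S$, then uses $J(V)=J(V)^2$ to upgrade this to $a_{\sigma}J(S_M)\subseteq J(S_M)$, hence $a_{\sigma}\in S_M$ for every maximal ideal $M$, forcing $B=A_f$. That coefficient computation is what proves extremal $\Rightarrow$ maximal; (c)$\Rightarrow$(b) then follows by the chain (c)$\Rightarrow$(a)$\Rightarrow$extremal$\Rightarrow$maximal, not by a direct appeal to idempotence. The parts of your proposal that do match the paper are (c)$\Rightarrow$(a) via the directed union of principal ideals, the derivation of $H=G$ and $J(A_f)=J_G(A_f)$ from (c), part (4), and the reduction of parts (5) and (6) to \cite{K3}.
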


  \begin{proof} (1) Suppose $A_f$ is an extremal $V$-order. Let $B$ be a $V$-order in $\Sigma_f$ containing $A_f$. Since $A_f$ is extremal, $J(B)\subseteq A_f$, by \cite[Proposition 1.4]{K1}. Let $t=\sum_{\sigma\in G}a_{\sigma}x_{\sigma}\in B$, with $a_{\sigma}\in K$. Then $a_{\sigma}J(V)\subseteq S$, since $J(V)\subseteq J(B)\subseteq A_f$ and so $tJ(V)\subseteq A_f$. Since $J(V)^2=J(V)$, we see that $a_{\sigma}J(V)\subseteq J(V)S$. Thus, if $M$ is a maximal ideal of $S$, then $a_{\sigma}J(S_M)\subseteq J(S_M)$, hence $a_{\sigma}\in S_M$ for all maximal ideals $M$ of $A$. Therefore $a_{\sigma}\in S$ and $B=A_f$, and so $A_f$ is a maximal $V$-order in $\Sigma_f$.

  (2) If $A_f$ is a semihereditary $V$-order, then it is an extremal $V$-order by \cite[Theorem 1.5]{K1}. Hence $A_f$ is a semihereditary maximal order, and $J(A_f)=J(V)A_f$ by \cite[Corollary 1]{K2}, since $J(V)$ is not a principal ideal of $V$.

  Now suppose $J(A_f)=J(V)A_f$. We will show that $A_f$ is semihereditary. Observe that in this case $J(A_f)$ is the ascending union of projective ideals $\{ vA_f\mid v\in J(V)\}$. Therefore $J(A_f)$ is a flat $A_f$-module, hence $A_f$ is semihereditary by Lemma~\ref{lem:Coherent-lemma}.

  When $A_f$ is semihereditary, then $J(A_f)=J(V)A_f$, hence $J(A_f)=J_G(A_f)$ and $I_{\sigma}=J(V)S$ for every $\sigma\in G$. It follows that $H=G$.

  (3) Let $G_1=\textrm{Gal(}K/L\textrm{)}$. Suppose $A_f$ is semihereditary. Then $G_1\subseteq G=H$, hence the result follows from Lemma~\ref{lem:Subgroup-lemma}.

  (4) If $A_f$ is a valuation ring of $\Sigma_f$, then it is semihereditary and $J(A_f)$ is a maximal ideal of $A_f$. Since $A_f$ is semihereditary, $J(A_f)=J(V)A_f$. Conversely, if $J(V)A_f$ is a maximal ideal of $A_f$, then $J(A_f)=J(V)A_f$, and $A_f$ is a semihereditary primary order in $\Sigma_f$, hence it is a valuation ring of $\Sigma_f$.

  (5) If $A_f$ is semihereditary, then $H=G$, as we have seen above. On the other hand, if $f(\sigma,\tau)\not\in M^2$ for every maximal ideal $M$ of $S$, then $f(\sigma,\tau)\not\in J(S_M)^2$ for every maximal ideal $M$ of $S$.
  Since $J(V)^2=J(V)$, $J(S_M)^2=J(S_M)$, hence $f(\sigma,\tau)\in U(S_M)$ for every maximal ideal $M$ of $S$, and so $f(\sigma,\tau)\in U(S)$. The result now follows from \cite[Theorem 2(a)]{K3}.

  (6) Suppose $\overline{V}$ is a perfect field. If $A_f$ is semihereditary then $H=G$ and, from \cite[Theorem 2(b)]{K3},
  $K/F$ is tamely ramified and defectless. On the other hand, if $H=G$ and $K/F$ is tamely ramified and defectless, then $A_f$ is semihereditary by \cite[Theorem 2(a)]{K3}.

  \end{proof}

Fix a maximal ideal $M$ of $S$. The relation on $G/H$ given by $\sigma H\leq_M\tau H$ if $f(\sigma,\sigma^{-1}\tau)\not\in M$ is well defined and, since $$f(\sigma,\sigma^{-1}\tau)f(\tau,\tau^{-1}\gamma)=f^{\sigma}(\sigma^{-1}\tau,\tau^{-1}\gamma)f(\sigma,\sigma^{-1}\gamma),$$ it is a preorder on $G/H$ (see \cite[Proposition 3.3(a)]{H}). Hence $\leq_M$ induces an equivalence relation on $G/H$ in the usual manner: $\sigma H$ is equivalent to $\tau H$ if $\sigma H\leq_M\tau H$ and $\tau H\leq_M\sigma H$. Further, $\leq_M$ induces a partial ordering on the set of the equivalence classes thus formed. We shall call the partial ordering on the equivalence classes \textit{the graph of} $[f]_M$ and we will denote it by $\textrm{Gr}(f^M)$. We will denote the equivalence class containing $\sigma H$ by $[\sigma]_M$ and, abusing notation, we shall say $[\sigma]_M\leq_M [\tau]_M$ if $\sigma H\leq_M\tau H$. Observe that the graph $\textrm{Gr}(f^M)$ is rooted at $[1]_M$.

The following lemma was originally proved by Wilson when $V$ is a DVR indecomposed in $K$ \cite[Lemma 2.7]{CW2}. The same argument holds in general:

\begin{lemma} \label{lem:Wilson-Lemma}
Let $\sigma,\tau\in G$.
If $v_M$ is a valuation on $K$ corresponding to the maximal ideal $M$ of $S$, then $v_M(f(\sigma,\sigma^{-1}))\leq v_M(f(\tau,\tau^{-1}))$ whenever $\sigma H\leq_M \tau H$.
\end{lemma}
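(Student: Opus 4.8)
The plan is to reduce the desired inequality to a single instance of the two-cocycle identity followed by monotonicity of $v_M$. The first step is to unwind the hypothesis: writing $\rho=\sigma^{-1}\tau$, the condition $\sigma H\leq_M\tau H$ means precisely $f(\sigma,\sigma^{-1}\tau)=f(\sigma,\rho)\notin M$. Since $f$ takes values in $S^{\#}$ and $v_M$ is nonnegative on $S$, vanishing exactly on $S\setminus M$, this is the same as saying $v_M(f(\sigma,\rho))=0$. So the whole problem becomes: extract from the cocycle relations an identity linking $f(\tau,\tau^{-1})$, $f(\sigma,\sigma^{-1})$ and $f(\sigma,\rho)$, and then take $v_M$.

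The key (and essentially only nontrivial) step is choosing the right specialization of the cocycle identity. Recall that associativity forces, for all $\beta,\gamma\in G$,
$$\sigma\bigl(f(\beta,\gamma)\bigr)\,f(\sigma,\beta\gamma)=f(\sigma\beta,\gamma)\,f(\sigma,\beta).$$
I would take $\beta=\rho=\sigma^{-1}\tau$ and $\gamma=\tau^{-1}$, so that $\beta\gamma=\sigma^{-1}$ and $\sigma\beta=\tau$. This yields
$$\sigma\bigl(f(\sigma^{-1}\tau,\tau^{-1})\bigr)\,f(\sigma,\sigma^{-1})=f(\tau,\tau^{-1})\,f(\sigma,\sigma^{-1}\tau).$$
Everything after the substitution is forced: the point is simply to arrange that $f(\sigma,\sigma^{-1})$ and $f(\tau,\tau^{-1})$ occur on opposite sides, with the comparison term $f(\sigma,\sigma^{-1}\tau)$ as the only other factor on the right.

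Finally I would apply $v_M$ to both sides of the displayed equation. Using additivity of $v_M$ and the hypothesis $v_M(f(\sigma,\sigma^{-1}\tau))=0$ gives
$$v_M\bigl(\sigma(f(\sigma^{-1}\tau,\tau^{-1}))\bigr)+v_M\bigl(f(\sigma,\sigma^{-1})\bigr)=v_M\bigl(f(\tau,\tau^{-1})\bigr).$$
Because $S$ is the integral closure of $V$ in $K$ and $\sigma$ fixes $F$, the ring $S$ is $G$-stable, so $\sigma(f(\sigma^{-1}\tau,\tau^{-1}))\in S$ and hence has nonnegative $v_M$-value. Discarding this nonnegative term yields $v_M(f(\sigma,\sigma^{-1}))\leq v_M(f(\tau,\tau^{-1}))$, as claimed. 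I do not expect any serious obstacle here: the entire content is the bookkeeping needed to pick $\beta=\sigma^{-1}\tau$, $\gamma=\tau^{-1}$, and one needs nothing about how $\sigma$ acts on valuations beyond the fact that it preserves $S$.
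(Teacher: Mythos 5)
Your proof is correct and is exactly the paper's argument: the paper's entire proof is the single cocycle identity $f^{\sigma}(\sigma^{-1}\tau,\tau^{-1})f(\sigma,\sigma^{-1})=f(\sigma,\sigma^{-1}\tau)f(\tau,\tau^{-1})$, which is precisely your specialization $\beta=\sigma^{-1}\tau$, $\gamma=\tau^{-1}$, and the remaining steps (applying $v_M$, using $v_M(f(\sigma,\sigma^{-1}\tau))=0$ and nonnegativity of $v_M$ on $S$) are the routine bookkeeping the paper leaves implicit.
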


\begin{proof}
This follows from the cocycle identity $$f^{\sigma}(\sigma^{-1}\tau,\tau^{-1})f(\sigma,\sigma^{-1})=f(\sigma,\sigma^{-1}\tau)f(\tau,\tau^{-1}).$$
\end{proof}

If $I$ is an additive subgroup of $\Sigma_f$, we will let $O_l(I)=\{x\in\Sigma_f\mid xI\subseteq I\}$.
We observe that, if $J(V)$ is a principal ideal of $V$ and $I$ is an ideal of $A_f$ containing $J(V)$, then $I$ is finitely generated over $A_f$: $I/J(V)A_f$ is finitely generated over $\overline{V}$, hence there are elements $a_1,a_2,\dots,a_m\in I$ such that $I=a_1V+a_2V+\cdots+a_mV + J(V)A_f$, a finitely generated ideal of $A_f$ since $J(V)$ is a principal ideal of $V$. In the same manner, if $I$ is an ideal of $S$ containing $J(V)$ then $I$ is a finitely generated ideal of $S$.

The following result is quite consequential; in fact, \cite[Theorem]{K5} follows from it.

\begin{theorem} \label{thm:fundamental-theorem}
Suppose $J(V)$ is a principal ideal of $V$. Given the crossed-product order $A_f$,
then $O_l(J_G(A_f))=A_f$ if and only if $f(\sigma,\sigma^{-1})\not\in M^2$ for every $\sigma\in G$ and each maximal ideal $M$ of $S$; if and only if $f(\sigma,\tau)\not\in M^2$ for every $\sigma,\tau\in G$ and each maximal ideal $M$ of $S$.
\end{theorem}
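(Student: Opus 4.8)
The plan is to isolate a single genuinely hard implication. I would read all hypotheses valuation-theoretically. Since $J(V)$ is principal, $\Gamma_V$ has a least positive element, and as $[\Gamma_{S_M}:\Gamma_V]$ is finite for each maximal ideal $M$ of $S$, so does $\Gamma_{S_M}$; write $\delta_M$ for it. Then $J(S_M)$ is principal, $J(S)S_M=J(S_M)=\{x:v_M(x)\ge\delta_M\}$, and for $a\in S$ one has $a\in M^2$ iff $v_M(a)\ge 2\delta_M$ — exactly the reading already used in the proof of Lemma~\ref{lem:the-ideal-lemma}(3). I would also record two facts I will use repeatedly: $\{b\in K:bJ(S)\subseteq J(S)\}=\bigcap_M S_M=S$ (localize), and $v_M\big(\rho(x)\big)=v_{\rho^{-1}M}(x)$ for $\rho\in G$. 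Finally, for a maximal ideal $\lambda H$ may be replaced by its height $v_M(f(\lambda,\lambda^{-1}))$, and the explicit value $v_M(I_\lambda)=\max\!\big(0,\delta_M-v_M(f(\lambda,\lambda^{-1}))\big)$.

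The implication $(C)\Rightarrow(B)$ is immediate on taking $\tau=\sigma^{-1}$. For $(B)\Rightarrow(C)$ I would use the normalized cocycle identity with $\rho=\tau^{-1}$,
\[
\sigma\!\left(f(\tau,\tau^{-1})\right)=f(\sigma,\tau)\,f(\sigma\tau,\tau^{-1}),
\]
so that $v_M(f(\sigma,\tau))\le v_M\!\left(\sigma(f(\tau,\tau^{-1}))\right)=v_{\sigma^{-1}M}(f(\tau,\tau^{-1}))<2\delta_{\sigma^{-1}M}=2\delta_M$, the last step by $(B)$ at the prime $\sigma^{-1}M$ and the Galois-invariance of $\delta$; hence $f(\sigma,\tau)\notin M^2$. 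For $(B)\Rightarrow(A)$ the containment $A_f\subseteq O_l(J_G(A_f))$ is clear. Since $\Sigma_f$ is strongly $G$-graded and $J_G(A_f)=\sum_\sigma I_\sigma x_\sigma$ is a graded ideal, $O_l(J_G(A_f))$ is graded (the components of $y\cdot ax_\sigma$ occur in distinct degrees), so it suffices to treat a homogeneous $y=b\,x_\sigma$. Feeding $y$ the piece $I_{\sigma^{-1}}x_{\sigma^{-1}}\subseteq J_G(A_f)$ gives $b\,\sigma(I_{\sigma^{-1}})f(\sigma,\sigma^{-1})\subseteq I_1=J(S)$; by Lemma~\ref{lem:the-ideal-lemma}(2) $\sigma(I_{\sigma^{-1}})=I_\sigma$, and by Lemma~\ref{lem:the-ideal-lemma}(3) with $(B)$ we have $I_\sigma f(\sigma,\sigma^{-1})=J(S)$. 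Thus $bJ(S)\subseteq J(S)$, so $b\in S$ and $y\in A_f$.

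It remains to prove $(A)\Rightarrow(B)$, which I would do by contraposition and which I expect to be the crux. Assume $f(\tau,\tau^{-1})\in M^2$ for some $\tau,M$; I want a homogeneous $b\,x_\rho\in O_l(J_G(A_f))\setminus A_f$. The defect is concentrated at $M$: $I_\tau S_M=S_M$ and $v_M(I_\tau f(\tau,\tau^{-1}))\ge 2\delta_M>\delta_M$. I would take $b$ with $v_M(b)<0$ but $v_N(b)\ge0$ for all other maximal ideals $N$, so that the inclusions $b\,\rho(I_\sigma)f(\rho,\sigma)\subseteq I_{\rho\sigma}$ are automatic at every $N\ne M$ (they hold for $x_\rho\in A_f$ and only improve as $v_N(b)\ge0$). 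The problem then reduces to the single-prime inequalities
\[
v_M(b)+\max\!\big(0,\delta_M-v_{\rho^{-1}M}(f(\sigma,\sigma^{-1}))\big)+v_M(f(\rho,\sigma))\ \ge\ \max\!\big(0,\delta_M-v_M(f(\rho\sigma,(\rho\sigma)^{-1}))\big)
\]
for all $\sigma\in G$, where $v_{\rho^{-1}M}(f(\sigma,\sigma^{-1}))=v_M(f(\rho,\sigma))+v_M(f(\rho\sigma,\sigma^{-1}))$ by the cocycle identity above. I would pick $\rho$ to be a maximal vertex of the graph $\mathrm{Gr}(f^M)$ with height $v_M(f(\rho,\rho^{-1}))\ge 2\delta_M$ — one exists because some vertex has height $\ge2\delta_M$ and, by Lemma~\ref{lem:Wilson-Lemma}, heights do not decrease as one moves up $\le_M$ — and set $v_M(b)=\delta_M-v_M(f(\rho,\rho^{-1}))<0$, so $b\notin S$.

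The main obstacle is verifying the displayed inequality for \emph{every} $\sigma$. The term $v_{\rho^{-1}M}(f(\sigma,\sigma^{-1}))$ lives at the conjugate prime $\rho^{-1}M$, so when $V$ is decomposed in $K$ one must compare two different valuations and rule out the "saturating" $\sigma$ for which equality would force $v_M(b)\ge0$. I would control these using Lemma~\ref{lem:Wilson-Lemma} — maximality of $\rho$ drives the offending cosets into the root class $[1]_M$ and fixes their heights — together with repeated application of the cocycle identity. The cleanest way to eliminate the cross-prime bookkeeping is to first reduce to the indecomposed situation by passing to the decomposition field $K^Z(S_M\!\mid\! F)$ and the order $A_{f_M}$ over $U=K^Z\cap S_M$: there $\rho^{-1}M$ collapses to the single relevant valuation, the maximal-vertex choice becomes decisive, and one transfers $(A)\Leftrightarrow(B)$ for $A_{f_M}$ back to $A_f$ through the graph comparison developed in Section~\ref{sec:graphs}. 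Establishing that transfer, and the single-prime verification it simplifies, is where essentially all the work lies; the remaining implications are the short computations above.
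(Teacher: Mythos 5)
Your implications $(C)\Rightarrow(B)$, $(B)\Rightarrow(C)$ and $(B)\Rightarrow(A)$ are correct, and the last of these is in fact a little cleaner than the paper's version: the observation that $O_l(J_G(A_f))$ is a graded subring of $\Sigma_f$, so that one only needs to test a homogeneous $bx_\sigma$ against $I_{\sigma^{-1}}x_{\sigma^{-1}}$ and invoke $I_\sigma f(\sigma,\sigma^{-1})=J(S)$ from Lemma~\ref{lem:the-ideal-lemma}(3), replaces the paper's case split on whether $f(\sigma,\sigma^{-1})\in M$.

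The direction $(A)\Rightarrow(B)$, however, is not proved, and the sketch you give for it has two concrete defects. First, your numerical choice $v_M(b)=\delta_M-v_M(f(\rho,\rho^{-1}))$ is too aggressive: testing your own displayed inequality at $\sigma=1$ gives $v_M(b)+v_M(\rho(J(S)))+v_M(f(\rho,1))=v_M(b)+\delta_M\geq v_M(I_\rho)=0$, which forces $v_M(f(\rho,\rho^{-1}))\leq 2\delta_M$; so whenever $v_M(f(\rho,\rho^{-1}))>2\delta_M$ your element $bx_\rho$ already fails to lie in $O_l(J_G(A_f))$. The pole must be fixed at exactly $-\delta_M$ independently of the depth of $f(\rho,\rho^{-1})$ (the paper takes $a=\pi_S/r^2$ with $M=rS$, $J(S)=\pi_SS$). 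Second, your proposed ``cleanest way'' --- reducing to the indecomposed case over $K^Z(S_M\!\mid\!F)$ and transferring back via the graph comparison of Section~\ref{sec:graphs} --- does not go through: that transfer (Theorem~\ref{thm:The-Graph-of-Primary-Order} and its corollaries) requires a ``nice'' set of coset representatives, i.e.\ that $\psi_M$ be a graph isomorphism, which is exactly what need not exist here; and even where it applies it only controls $f(\sigma,\sigma^{-1})$ for $\sigma\in G^Z$, whereas $(B)$ demands the condition for all $\sigma\in G$ --- the paper's Remark~\ref{rem:Kessler-Wilson} is precisely a warning that these are not the same. The missing idea is a direct, single dichotomy exploiting maximality of $[\sigma]_M$ in $\mathrm{Gr}(f^M)$: for every $\tau\in G$, either $f(\sigma,\tau)\in M$, or else $\sigma H\leq_M\sigma\tau H$ forces $\sigma\tau H\leq_M\sigma H$ and hence $f^{\sigma}(\tau,\tau^{-1})=f(\sigma,\tau)f(\sigma\tau,(\sigma\tau)^{-1}\sigma)\notin M$, so that $\sigma(I_\tau)\subseteq M$. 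In either case $\sigma(I_\tau)f(\sigma,\tau)\subseteq M$, which absorbs the single factor $r^{-1}$ of the pole of $a$ left over after $f(\sigma,\sigma^{-1})\in M^2$ cancels $r^{-2}$, and then Lemma~\ref{lem:the-ideal-lemma}(1) applied to $\bigl(a\,\sigma(I_\tau)f(\sigma,\tau)\bigr)f(\sigma\tau,(\sigma\tau)^{-1})\subseteq J(S)$ lands the product in $I_{\sigma\tau}$. Without this step the contradiction $ax_\sigma\in O_l(J_G(A_f))\setminus A_f$ is not established, and the equivalence is incomplete.
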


\begin{proof}
Suppose that $O_l(J_G(A_f))=A_f$. Fix a maximal ideal $M$ of $S$.
We will first show that $f(\tau,\tau^{-1})\not\in M^2$ for every $\tau\in G$. By Lemma~\ref{lem:Wilson-Lemma}, it suffices to show that
$f(\sigma,\sigma^{-1})\not\in M^2$ whenever $[\sigma]_M$ is a maximal element with respect to the partial ordering discussed above. So assume $[\sigma]_M$ is a maximal vertex of $\textrm{Gr}(f^M)$.

Observe that, since $M\supseteq J(S)\supseteq J(V)$, both $M$ and $J(S)$ are finitely generated ideals of $S$. Hence they are principal ideals, say $M=rS$ and $J(S)=\pi_SS$, since $S$ is a B\'{e}zout domain. If on the contrary $f(\sigma,\sigma^{-1})\in M^2$, we let $a=\pi_S/r^2$ and $x=ax_{\sigma}$. Then $a\not\in S$, since $a\not\in S_M$.
Hence $x\not\in A_f$.

Fix $\tau\in G$. We want to show that $xI_{\tau}x_{\tau}\subseteq I_{\sigma\tau}x_{\sigma\tau}$.

Suppose $f(\sigma,\tau)\not\in M$ and $f^{\sigma}(\tau,\tau^{-1})\in M$. Since $f(\sigma,\sigma^{-1}(\sigma\tau))=f(\sigma,\tau)\not\in M$, we have $\sigma H\leq_M \sigma\tau H$, and thus
$\sigma\tau H\leq_M\sigma H$ by the choice of $\sigma$. But $f^{\sigma}(\tau,\tau^{-1})\in M$ and
$f^{\sigma}(\tau,\tau^{-1})=f(\sigma,\tau)f(\sigma\tau,\tau^{-1})$. Since $f(\sigma,\tau)\not\in M$, we conclude that $f(\sigma\tau,\tau^{-1})\in M$. But $f(\sigma\tau,\tau^{-1})=f(\sigma\tau,(\sigma\tau)^{-1}\sigma)$ and thus
$\sigma\tau H\not\leq_M\sigma H$, a contradiction.

Therefore $f(\sigma,\tau)\in M$ or $f(\tau,\tau^{-1})\not\in M^{\sigma^{-1}}$, and hence $I_{\tau}^{\sigma}f(\sigma,\tau)\subseteq M$. This implies that $aI_{\tau}^{\sigma}f(\sigma,\tau)\subseteq S$.

We have
\begin{eqnarray*}
(aI_{\tau}^{\sigma}f(\sigma,\tau))f(\sigma\tau,(\sigma\tau)^{-1}) & = &
aI_{\tau}^{\sigma}f^{\sigma}(\tau,\tau^{-1}\sigma^{-1})f(\sigma,\sigma^{-1}) \\ & = &
\pi_S(f(\sigma,\sigma^{-1})/r^2)I_{\tau}^{\sigma}f^{\sigma}(\tau,\tau^{-1}\sigma^{-1})\\ & \subseteq & J(S). \end{eqnarray*} Therefore $aI_{\tau}^{\sigma}f(\sigma,\tau)\subseteq I_{\sigma\tau}$ by Lemma~\ref{lem:the-ideal-lemma}(1), hence
$xI_{\tau}x_{\tau}=aI_{\tau}^{\sigma}f(\sigma,\tau)x_{\sigma\tau}\subseteq I_{\sigma\tau}x_{\sigma\tau}\subseteq J_G(A_f)$. Thus $x\in O_l(J_G(A_f))=A_f$, a contradiction.

We conclude that $f(\tau,\tau^{-1})\not\in M^2$ for every $\tau\in G$ and each maximal ideal $M$ of $S$. From the cocycle identity $f^{\sigma}(\sigma^{-1},\sigma\tau)f(\sigma,\tau)=f(\sigma,\sigma^{-1})$, it follows that $f(\sigma,\tau)\not\in M^2$ for every $\sigma,\tau\in G$, and each maximal ideal $M$ of $S$.

Suppose $f(\sigma,\sigma^{-1})\not\in M^2$ for every $\sigma\in G$ and each maximal ideal $M$ of $S$. Let $x=\sum_{\tau\in G}k_{\tau}x_{\tau}\in O_l(J_G(A_f))$ with $k_{\tau}\in K$. Fix a maximal ideal $M$ of $S$ and let $v_M$ be a valuation on $K$ corresponding to $M$. Observe that if $J(S)=\pi_SS$, then $v_M(\pi_S)$ is the smallest positive element of the value group of $v_M$ hence, if $k\in K$ satisfies $v_M(k)>-v_M(\pi_S)$ then $v_M(k)\geq 0$. Now fix a $\sigma\in G$. The hypothesis implies that $v_M(f(\sigma,\sigma^{-1}))\leq v_M(\pi_S)$. We will show that $v_M(k_{\sigma})\geq 0$.

If $f(\sigma,\sigma^{-1})\in M$ then, since  $k_{\sigma}f(\sigma,\sigma^{-1})I_{\sigma}=
k_{\sigma}x_{\sigma}I_{\sigma^{-1}}x_{\sigma^{-1}}\subseteq I_1=J(S)\subseteq M$ by Lemma~\ref{lem:the-ideal-lemma}(2), we have $v_M(k_{\sigma}f(\sigma,\sigma^{-1}))>0$ as $I_{\sigma}\not\subseteq M$. Hence $v_M(k_{\sigma})>-v_M(f(\sigma,\sigma^{-1}))\geq -v_M(\pi_S)$ and so $v_M(k_{\sigma})\geq 0$.

On the other hand, if $f(\sigma,\sigma^{-1})\not\in M$ then $I_{\sigma}\subseteq M$. Since $J(S)A_f\subseteq J_G(A_f)$, we have $\sigma^{-1}(\pi_S)\in J_G(A_f)$, hence $x\sigma^{-1}(\pi_S)\in J_G(A_f)$ and therefore $k_{\sigma}\pi_S\in I_{\sigma}$. Then $v_M(k_{\sigma}\pi_S)>0$ hence $v_M(k_{\sigma})\geq 0$ and so $k_{\sigma}\in S$ and $x\in A_f$.
\end{proof}

\begin{corollary} \label{cor:First-Maximal-result}
Suppose $J(V)$ is a principal ideal of $V$ and the crossed-product order $A_f$ is a maximal $V$-order in $\Sigma_f$.
Then $f(\sigma,\tau)\not\in M^2$ for every $\sigma,\tau\in G$, and each maximal ideal $M$ of $S$.
\end{corollary}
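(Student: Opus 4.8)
The plan is to deduce this directly from Theorem~\ref{thm:fundamental-theorem}. That theorem tells us that the desired conclusion, namely $f(\sigma,\tau)\notin M^2$ for all $\sigma,\tau\in G$ and all maximal ideals $M$ of $S$, is equivalent to the single identity $O_l(J_G(A_f))=A_f$. Since the inclusion $A_f\subseteq O_l(J_G(A_f))$ always holds (because $J_G(A_f)$ is a two-sided ideal of $A_f$), the entire task reduces to establishing the reverse inclusion $O_l(J_G(A_f))\subseteq A_f$. I intend to extract this from the maximality of $A_f$ by showing that $O_l(J_G(A_f))$ is itself a $V$-order in $\Sigma_f$.

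To that end, set $B=O_l(J_G(A_f))$. As just noted, $B$ is a subring of $\Sigma_f$ containing $A_f$, so $BF=\Sigma_f$ and $B$ is an order; the substantive point is that $B$ is integral over $V$. Here I would exploit that $J(V)$ is principal, say $J(V)=\pi V$. Since $J(V)A_f\subseteq J_G(A_f)$ we have $\pi A_f\subseteq J_G(A_f)$, so for any $x\in B$ it follows that $x\,\pi A_f\subseteq x\,J_G(A_f)\subseteq J_G(A_f)\subseteq A_f$; because $1\in A_f$ and $\pi$ is central this yields $\pi x=x\pi\in A_f$. Hence $\pi B\subseteq A_f$, that is, $B$ is trapped between $A_f$ and $\pi^{-1}A_f$.

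The main obstacle will be to upgrade this boundedness $A_f\subseteq B\subseteq\pi^{-1}A_f$ to genuine integrality of $B$ over $V$, since the non-Noetherian and possibly ramified nature of $V$ makes $A_f$ need not be a finitely generated $V$-module and so blocks the classical module-finiteness argument. I would handle it as follows. Fix $x\in B$ and write $x^n=\sum_{\tau\in G}k_\tau^{(n)}x_\tau$ with $k_\tau^{(n)}\in K$. As $B$ is a ring, every power $x^n$ lies in $B\subseteq\pi^{-1}A_f$, so $v_M(k_\tau^{(n)})\geq -v_M(\pi)$ for every maximal ideal $M$ of $S$, uniformly in $n$. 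This uniform bound on the coefficients of all powers of $x$ is what should force $x$ to be integral over $V$ (for instance, it bounds all the power sums $\mathrm{Trd}(x^n)$ below, and a standard valuation-theoretic argument then confines the coefficients of the reduced characteristic polynomial of $x$ to $V$); alternatively one may invoke the standard fact that the left order of a full two-sided ideal of a $V$-order is again a $V$-order. Granting the integrality, $B$ is a $V$-order in $\Sigma_f$ containing the maximal order $A_f$, whence $B=A_f$ by maximality, and Theorem~\ref{thm:fundamental-theorem} then delivers $f(\sigma,\tau)\notin M^2$ for all $\sigma,\tau\in G$ and all maximal ideals $M$ of $S$, as required.
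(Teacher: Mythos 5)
Your proposal is correct and follows the paper's own route: reduce via Theorem~\ref{thm:fundamental-theorem} to showing $O_l(J_G(A_f))=A_f$, observe that $O_l(J_G(A_f))$ is a $V$-order containing $A_f$, and invoke maximality. The paper simply cites \cite[Corollary 1.3]{K1} for the fact that the left order of this ideal is a $V$-order --- precisely the ``standard fact'' you offer as your fallback --- so your additional direct integrality argument, while reasonable in outline, is not needed.
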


\begin{proof}
 We note that $O_l(J_G(A_f))$ is a $V$-order in $\Sigma_f$, by \cite[Corollary 1.3]{K1}. Since $O_l(J_G(A_f))\supseteq A_f$ and $A_f$ is a maximal order, we must have equality: $O_l(J_G(A_f))=A_f$.
 \end{proof}

The following proposition has analogies in \cite[Theorem 2.3(3)]{H}, and in a result by Wilson which we shall generalize in Theorem~\ref{thm:Wilson-Theorem} in the next section. In either case, $V$ was a DVR indecomposed in $K$. The approach we have taken below is essentially due to Wilson.

\begin{proposition} \label{prop:chain-graph}
Given the crossed-product order $A_f$, suppose $f(\sigma,\tau)\not\in M^2$ for every $\sigma,\tau\in G$ and each maximal ideal $M$ of $S$.
Then for each maximal ideal $M$ of $S$, the graph $\textrm{Gr}(f^M)$ is a chain.
\end{proposition}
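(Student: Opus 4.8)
The plan is to prove the stronger statement that the preorder $\leq_M$ is \emph{total} on $G/H$; once that is known, the partial order it induces on the equivalence classes is automatically a total order, which is exactly what it means for $\textrm{Gr}(f^M)$ to be a chain. Concretely, I would aim at the dichotomy: for every pair $\sigma,\tau\in G$, either $f(\sigma,\sigma^{-1}\tau)\notin M$ or $f(\tau,\tau^{-1}\sigma)\notin M$, i.e. $\sigma H\leq_M\tau H$ or $\tau H\leq_M\sigma H$.

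Before invoking any valuation theory I would clear away the degenerate case. If $J(V)$ is not principal then $M^2=M$ for every maximal ideal $M$ (as recalled just before Lemma~\ref{lem:the-ideal-lemma}), so the hypothesis $f(\sigma,\sigma^{-1})\notin M^2=M$ forces $f(\sigma,\sigma^{-1})\in U(S)$ for all $\sigma$, whence $H=G$ and $G/H$ is a single point, trivially a chain. So I may assume $J(V)$ is principal. Then, exactly as in the proof of Theorem~\ref{thm:fundamental-theorem}, the ideals $M$ and $J(S)$ are principal, say $J(S)=\pi_S S$, and $v_M(\pi_S)$ is the smallest positive element $\gamma_0$ of the value group of $v_M$. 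Consequently $f\notin M^2$ is equivalent to $v_M(f)<2\gamma_0$, while every nonzero element of $M$ has value $\geq\gamma_0$.

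The engine is the cocycle identity displayed just before Lemma~\ref{lem:Wilson-Lemma}, namely $f(\sigma,\sigma^{-1}\tau)f(\tau,\tau^{-1}\gamma)=f^{\sigma}(\sigma^{-1}\tau,\tau^{-1}\gamma)f(\sigma,\sigma^{-1}\gamma)$, which I would specialize to $\gamma=\sigma$. Since $f$ is normalized, $f(\sigma,\sigma^{-1}\sigma)=f(\sigma,1)=1$, and writing $\rho=\sigma^{-1}\tau$ (so that $\tau^{-1}\sigma=\rho^{-1}$) the identity collapses to
\[
f(\sigma,\sigma^{-1}\tau)\,f(\tau,\tau^{-1}\sigma)=\sigma\bigl(f(\rho,\rho^{-1})\bigr).
\]
Applying $v_M$ gives the key additivity $v_M(f(\sigma,\sigma^{-1}\tau))+v_M(f(\tau,\tau^{-1}\sigma))=v_M(\sigma(f(\rho,\rho^{-1})))$. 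Now the hypothesis applied at the maximal ideal $\sigma^{-1}(M)$ yields $f(\rho,\rho^{-1})\notin(\sigma^{-1}M)^2$, and since $\sigma$ is a ring automorphism permuting the maximal ideals with $\sigma((\sigma^{-1}M)^2)=M^2$, we get $\sigma(f(\rho,\rho^{-1}))\notin M^2$, i.e. the right-hand value is $<2\gamma_0$. If both left-hand factors lay in $M$, the left side would be $\geq 2\gamma_0$, a contradiction; hence at least one of $f(\sigma,\sigma^{-1}\tau)$, $f(\tau,\tau^{-1}\sigma)$ escapes $M$. This is the desired dichotomy, so $\leq_M$ is total and $\textrm{Gr}(f^M)$ is a chain.

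I expect the only delicate point to be the valuation bookkeeping in the last step: confirming that ``$\notin M^2$'' is exactly the condition $v_M(\cdot)<2\gamma_0$ (which rests on $J(S)=\pi_S S$ being principal with $\gamma_0=v_M(\pi_S)$ the least positive value, as established in Theorem~\ref{thm:fundamental-theorem}), and verifying the $\sigma$-equivariance $\sigma(f(\rho,\rho^{-1}))\notin M^2$ through $\sigma(N^2)=\sigma(N)^2$ for maximal ideals $N$. Everything else is a routine specialization of the cocycle identity, and one should note that all the values $f(\cdot,\cdot)$ are nonzero (they lie in $S^{\#}$), so $v_M$ is always finite.
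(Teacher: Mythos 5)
Your proof is correct and is essentially the paper's argument: the paper specializes the same cocycle identity (writing $\sigma=\tau\gamma$ so that $f(\tau,\gamma)f(\tau\gamma,\gamma^{-1})=f^{\tau}(\gamma,\gamma^{-1})$), applies the hypothesis at the conjugate maximal ideal to see the right side is not in $M^2$, and concludes that one of the two factors avoids $M$. The only difference is that your valuation bookkeeping (and the case split on $J(V)$ being principal) is unnecessary: if both factors lay in $M$ the product would lie in $M\cdot M\subseteq M^2$, which already gives the dichotomy without invoking $v_M$ or principality of $J(S)$.
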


\begin{proof}
Let $M$ be a maximal ideal of $S$, and let $\sigma,\tau\in G$. Write $\sigma=\tau\gamma$. We have $f(\tau,\gamma)f(\tau\gamma,\gamma^{-1})=f^{\tau}(\gamma,\gamma^{-1})$. Since $f(\gamma,\gamma^{-1})\not\in (M^{\tau^{-1}})^2=(M^2)^{\tau^{-1}}$, we have $f(\tau,\gamma)f(\tau\gamma,\gamma^{-1})\not\in M^2$. Therefore $f(\tau,\tau^{-1}(\tau\gamma))=f(\tau,\gamma)\not\in M$ or $f(\tau\gamma,(\tau\gamma)^{-1}\tau)=f(\tau\gamma,\gamma^{-1})\not\in M$, hence
  $\tau H\leq_M \tau\gamma H=\sigma H$, or $\sigma H=\tau\gamma H\leq_M \tau H$, so that the graph of $[f]_M$ is totally ordered.
 \end{proof}

 The converse of the statement above is false (see \cite[Example]{K5}).

 \begin{remark} \label{rem:chain-graph-remark}
 When $J(V)$ is a non-principal ideal of $V$ and $A_f$ is semihereditary, then $H=G$ by Theorem~\ref{thm:The-Main-Nonprincipal-Theorem}, hence for each maximal ideal $M$ of $S$ the graph of $[f]_M$ is trivial. If $J(V)$ is a principal ideal of $V$, then an example of a semihereditary order $A_f$ for which the graph of $[f]_M$ is not a chain for some maximal ideal $M$ of $S$ would be interesting. Obviously $A_f$ cannot be a maximal order and we cannot have $H=G$. As we shall see in the next chapter, the extension $K/F$ cannot be tamely ramified and defectless either. Thus the evidence points to a very pathological scenario indeed, and it may well not exist at all!

 Also, characterizing all crossed-product orders $A_f$ for which the graph of $[f]_M$ is a chain for each maximal ideal $M$ of $S$ would in itself be a worthwhile undertaking.
 \end{remark}

 Unlike in the case of classical crossed-product orders (see for example \cite[Theorem 1]{Hd}), the fact that $A_{f_M}$ or $A_{f_{(M)}}$ is semihereditary for some maximal ideal $M$ of $S$ does not alone guarantee that $A_f$ is semihereditary (see \cite[Example]{K5}). However we have the following affirmative result, but first we need more notation.

Given a maximal ideal $M$ of $S$, let $M=M_1,M_2,\ldots, M_r$ be the
complete list of maximal ideals of $S$, let $U_i=S_{M_i}\cap
K^Z(S_{M_i}\!\!\mid\!\! F)$ with $U=U_1$, and let $(K_i,S_i)$ be a Henselization of
$(K,S_{M_i})$ (see \cite[\S 17]{E} for a definition). Let $(F_h,V_h)$ be the unique Henselization of
$(F,V)$ contained in $(K_1,S_1)$ (see \cite[Theorem 17.11]{E}). We note that $(F_h,V_h)$ is also a
Henselization of $(K^Z,U)$. By \cite[Proposition 11]{HMW}, we have
$S\otimes_VV_h\cong S_1\oplus S_2\oplus\cdots\oplus S_r$.

\begin{proposition} \label{prop:A-global-local-result}
Suppose the crossed-product order $A_f$ is semihereditary (respectively a valuation ring of $\Sigma_f$). Then $A_{f_M}$ is a semihereditary order (respectively a valuation ring of $\Sigma_{f_M}$) for each maximal ideal $M$ of $S$.
\end{proposition}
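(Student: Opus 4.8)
The plan is to pass to the Henselization and realize $A_{f_M}\otimes_U V_h$ as a corner of $A_f\otimes_V V_h$, and then transfer semiheredity (and, in the Dubrovin case, primariness) along the faithfully flat maps $V\to V_h$ and $U\to V_h$ and through the corner. Using the decomposition $S\otimes_V V_h\cong S_1\oplus\cdots\oplus S_r$ recorded before the statement, write $e_1,\dots,e_r$ for the associated orthogonal central idempotents of $S\otimes_V V_h$, indexed so that $e_i$ corresponds to $S_i$. The group $G$ acts on $S\otimes_V V_h$ through the first factor and permutes the $M_i$ transitively with the stabiliser of $M_1$ equal to $G^Z$; hence $\sigma(e_1)=e_1$ precisely when $\sigma\in G^Z$.

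First I would compute the corner at $e_1$. From $x_\sigma t=\sigma(t)x_\sigma$ for $t\in S\otimes_V V_h$ one gets $x_\sigma e_1=\sigma(e_1)x_\sigma$, so $e_1(tx_\sigma)e_1=e_1\,t\,\sigma(e_1)\,x_\sigma$; since $e_i(S\otimes_V V_h)e_j=0$ for $i\neq j$, this vanishes unless $\sigma\in G^Z$ and otherwise equals $(e_1te_1)x_\sigma$. Thus $e_1(A_f\otimes_V V_h)e_1=\bigoplus_{\sigma\in G^Z}S_1x_\sigma$ with the crossed-product multiplication inherited from $f$. On the other hand, $U=S_{M_1}\cap K^Z$ is indecomposed in $K$, so its integral closure $T$ in $K$ is the single valuation ring $S_{M_1}$ and $A_{f_M}=\sum_{\sigma\in G^Z}S_{M_1}x_\sigma$. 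Applying the decomposition of \cite[Proposition 11]{HMW} to $K/K^Z$, where $U$ has the unique extension $S_{M_1}$ and $(F_h,V_h)$ is also a Henselization of $(K^Z,U)$, gives $S_{M_1}\otimes_U V_h\cong S_1$, and therefore $A_{f_M}\otimes_U V_h\cong e_1(A_f\otimes_V V_h)e_1$.

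For the semihereditary statement I would argue homologically. By Lemma~\ref{lem:Coherent-lemma}, and its evident analogues over $V_h$ and over $U$ (the relevant orders being free of finite rank over the semihereditary rings $S\otimes_V V_h$, $S_{M_1}\otimes_U V_h$ and $S_{M_1}$), all the rings in sight are coherent, so semiheredity is equivalent to $\textrm{wgld}\le 1$. Since $V\to V_h$ is faithfully flat and a filtered colimit of \'etale extensions, it does not raise weak global dimension, so $\textrm{wgld}(A_f\otimes_V V_h)\le\textrm{wgld}(A_f)\le 1$; a corner of a semihereditary ring is semihereditary, giving $A_{f_M}\otimes_U V_h$ semihereditary; and since $U\to V_h$ is faithfully flat, the base-change isomorphism $\textrm{Tor}^{A_{f_M}}_n(X,Y)\otimes_U V_h\cong\textrm{Tor}^{A_{f_M}\otimes_U V_h}_n(X\otimes_U V_h,\,Y\otimes_U V_h)=0$ for $n\ge 2$ forces $\textrm{Tor}^{A_{f_M}}_n(X,Y)=0$, whence $A_{f_M}$ is semihereditary.

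For the Dubrovin case it remains to propagate primariness, i.e.\ simplicity of the residue ring. Because the Henselization preserves residue fields ($\overline{V_h}=\overline V$, and the decomposition field forces $\overline U=\overline V$, while $S_i$ and $S_{M_i}$ share a residue field), the residue data determining the graded residue ring are unchanged by $-\otimes V_h$; using the explicit formula of Proposition~\ref{prop:The-Graded-Radical-Structure} for $J_G$ I would identify $\overline{A_f\otimes_V V_h}\cong\overline{A_f}$ and $\overline{A_{f_M}\otimes_U V_h}\cong\overline{A_{f_M}}$. Then $A_f$ primary gives $A_f\otimes_V V_h$ primary; since $e_1\neq 0$, $\overline{e_1(A_f\otimes_V V_h)e_1}=e_1\,\overline{A_f\otimes_V V_h}\,e_1$ is a nonzero corner of a simple Artinian ring, hence simple Artinian, so the corner is primary; and finally $\overline{A_{f_M}}\cong\overline{A_{f_M}\otimes_U V_h}$ is simple Artinian, i.e.\ $A_{f_M}$ is primary, so together with semiheredity it is a valuation ring of $\Sigma_{f_M}$. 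The main obstacle is the ascent of semiheredity along $V\to V_h$: flatness alone can increase weak global dimension, so one must genuinely exploit that the Henselization is ind-\'etale, in concert with the coherence from Lemma~\ref{lem:Coherent-lemma}; a secondary technical point is the clean matching of radicals under $-\otimes V_h$ that underlies the residue-ring isomorphisms in the Dubrovin case.
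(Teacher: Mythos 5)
Your skeleton is exactly the paper's: Henselize, identify $A_{f_M}\otimes_U V_h$ with the corner $e_1(A_f\otimes_V V_h)e_1$, and transfer the property. Your explicit computation of that corner (via $x_\sigma e_1=\sigma(e_1)x_\sigma$ and the fact that the stabiliser of $e_1$ is $G^Z$) is correct and fills in a step the paper leaves implicit, and your descent along the faithfully flat map $U\to V_h$ is sound, since by coherence it suffices to test flatness of finitely generated ideals, which are extended modules, so the Tor base-change isomorphism applies.

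The genuine gap is in the two ascent steps, which you yourself flag as the obstacles. First, the assertion that $V\to V_h$, being ind-\'etale, ``does not raise weak global dimension'' of $A_f$ is not a proof: flat central base change does not in general preserve $\mathrm{wgld}\le 1$, because a finitely generated ideal of $A_f\otimes_V V_h$ is not extended from $A_f$; it only descends to some \'etale neighbourhood $A_f\otimes_V V_i$, and one must then show \emph{that} order is semihereditary, which does not follow formally from $A_f$ being so. This ascent (together with the descent) is precisely the content of \cite[Theorem 3.4]{K1}, which the paper cites; without invoking it or reproving it your argument is incomplete. Second, in the Dubrovin case your identification $\overline{A_f\otimes_V V_h}\cong\overline{A_f}$ needs $J(A_f\otimes_V V_h)=J(A_f)\otimes_V V_h$, and this cannot be extracted from Proposition~\ref{prop:The-Graded-Radical-Structure}, which only controls the \emph{graded} radical $J_G$ (in general $J_G(A_f)\subsetneq J(A_f)$). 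The paper sidesteps both issues by quoting \cite[Theorem F]{Wa} to move the Dubrovin property up and down the Henselization and \cite[\S 1, Theorem 7]{D1} to pass it through the corner. With those three citations in place your write-up collapses to the paper's proof; without them the ascent claims are unsupported.
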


\begin{proof}
If $A_f$ is semihereditary then its Henselization, $A_f\otimes_VV_h$, is also semihereditary by \cite[Theorem 3.4]{K1}. Since $S\otimes_VV_h\subseteq A_f\otimes_VV_h$, if $e$ is the multiplicative identity of $S_1$ then $e$ is an idempotent of $A_f\otimes_VV_h$, hence $e(A_f\otimes_VV_h)e=\sum_{\sigma\in G^Z}S_1x_{\sigma}\cong A_{f_M}\otimes_UV_h$ is semihereditary. This shows that $A_{f_M}$ is semihereditary by \cite[Theorem 3.4]{K1}.
On the other hand, if $A_f$ is a valuation ring of $\Sigma_f$ then $A_f\otimes_VV_h$ is a valuation ring of
$\Sigma_f\otimes_FF_h$ by \cite[Theorem F]{Wa}, hence $e(A_f\otimes_VV_h)e$ is a valuation ring of $\Sigma_{f_M}\otimes_{K^Z}F_h$ by \cite[\S 1, Theorem 7]{D1}. This shows that $A_{f_M}$ is a valuation ring of $\Sigma_{f_M}$.\end{proof}

The converse of the proposition above does not always hold (see \cite[Example]{K5}). In Section~\ref{sec:graphs} we shall encounter a sufficient condition for the converse to hold.
The author suspects that, if $M$ is a maximal ideal of $S$ and $A_f$ is semihereditary, then $S(G_1)$ is semihereditary for each subgroup $G_1$ of $G$ contained in $G^Z(S_M\!\!\mid\!\! F)$. If that was indeed the case, then $A_{f(M)}$ too would be semihereditary whenever $A_f$ is semihereditary.

  \begin{proposition} \label{prop:fg-case}
  Suppose $S$ is finitely generated over $V$. Then:
  \begin{enumerate}
    \item The crossed-product order $A_f$ is a maximal $V$-order if and only if it is a valuation ring of $\Sigma_f$.
    \item If $J(V)$ is a non-principal ideal of $V$, then $A_f$ is an extremal $V$-order if and only if it is a valuation ring of $\Sigma_f$.
    \item If $J(V)$ is a non-principal ideal of $V$ and $\overline{V}$ is a perfect field, then
    $A_f$ is an extremal $V$-order if and only if it is an Azumaya algebra over $V$.
    \end{enumerate}
  \end{proposition}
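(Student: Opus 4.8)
The plan is to prove (1) first, and then obtain (2) and (3) as essentially formal consequences of (1) together with Theorems~\ref{thm:The-Main-Nonprincipal-Theorem} and~\ref{thm:The-Azumaya-Theorem}. I would use the hypothesis that $S$ is finitely generated over $V$ only through its standard valuation-theoretic consequences: being a finitely generated torsion-free module over the valuation ring $V$, $S$ is $V$-free of rank $[K:F]$, the set of maximal ideals $M_1,\dots,M_r$ is finite, and $K/F$ is defectless; hence $A_f=\sum_{\sigma}Sx_{\sigma}$ is itself module-finite over $V$. The whole point of the hypothesis is that this module-finiteness rules out the pathologies that separate maximal orders from Dubrovin valuation rings in general.

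For (1), the two implications together amount to the single assertion that, for a \emph{module-finite} order, the properties \emph{maximal $V$-order} and \emph{valuation ring of $\Sigma_f$} (that is, semihereditary and primary) coincide. For the direction that a valuation ring of $\Sigma_f$ is maximal, I would note that such an $A_f$ is semihereditary, hence extremal by \cite{K1}, and then argue that any $V$-order $B\supseteq A_f$ is again module-finite and integral over $V$, so that $Z(B)$ is integral over $V$ inside $F$ and therefore equals $V$; invoking the structure theory of Dubrovin valuation rings \cite{D1} (the overrings of a Dubrovin valuation ring correspond to coarsenings of its center), the equality $Z(B)=Z(A_f)=V$ forces $B=A_f$. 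For the converse I would pass to the Henselization: using the decomposition $S\otimes_V V_h\cong S_1\oplus\cdots\oplus S_r$ recorded before Proposition~\ref{prop:A-global-local-result} and the idempotents $e_i$, one reduces (as in the proof of Proposition~\ref{prop:A-global-local-result}) to the Henselian situation in which each $S_i$ is local, i.e. to the indecomposed case; there the classical theory of maximal orders over a Henselian valuation ring, together with the defectlessness supplied by $S$ finitely generated over $V$, shows that a maximal order is semihereditary and primary, hence a valuation ring. The main obstacle is precisely this converse: one must ascend the \emph{maximality} of $A_f$ to $A_f\otimes_V V_h$ (an analogue of the semihereditary ascent of \cite{K1} and of the valuation-ring ascent \cite{Wa}) and control the higher-rank value group, which is exactly where defectlessness is needed.

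Granting (1), part (2) is immediate: when $J(V)$ is non-principal, Theorem~\ref{thm:The-Main-Nonprincipal-Theorem}(1) gives that $A_f$ is extremal if and only if it is maximal, and (1) then converts ``maximal'' into ``valuation ring of $\Sigma_f$''.

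For part (3) I would chain the equivalences. If $A_f$ is extremal then by (2) it is a valuation ring of $\Sigma_f$, hence semihereditary; by Theorem~\ref{thm:The-Main-Nonprincipal-Theorem}(2) this yields $H=G$ and $I_{\sigma}=J(V)S$ for every $\sigma$, while $H=G$ forces $I_{\sigma}=J(S)$ by Lemma~\ref{lem:the-ideal-lemma}(5); comparing gives $J(S)=J(V)S$, so $e(S_M\!\mid\! F)=1$ for every $M$. Since $\overline{V}$ is perfect, Theorem~\ref{thm:The-Main-Nonprincipal-Theorem}(6) shows $K/F$ is tamely ramified and defectless, so $\overline{S}_M/\overline{V}$ is separable; combined with $e=1$ this says $K/F$ is unramified and defectless, whence $A_f$ is Azumaya by Theorem~\ref{thm:The-Azumaya-Theorem}. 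Conversely, if $A_f$ is Azumaya then Theorem~\ref{thm:The-Azumaya-Theorem} gives $H=G$ with $K/F$ unramified and defectless, so $A_f$ is semihereditary by Theorem~\ref{thm:The-Main-Nonprincipal-Theorem}(6) and primary (its residue ring $A_f/J(V)A_f$ is Azumaya over the field $\overline{V}$, hence simple), so $A_f$ is a valuation ring of $\Sigma_f$ and therefore extremal by (2). I expect (3) to be routine once (1) is in hand; all the genuine difficulty is concentrated in the module-finite equivalence of part (1).
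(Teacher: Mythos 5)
Your overall architecture --- prove (1) and then derive (2) and (3) from Theorem~\ref{thm:The-Main-Nonprincipal-Theorem} and Theorem~\ref{thm:The-Azumaya-Theorem} --- is the paper's, and your part (2) is correct. But there are two genuine gaps.

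In part (1), the direction ``maximal $\Rightarrow$ valuation ring'' is the whole point, and you leave it unproved. The paper disposes of all of (1) in one line by citing the remarks following \cite[Proposition 1.8]{K1}, which assert that a \emph{finitely generated} $V$-order in $\Sigma_f$ is a valuation ring if and only if it is a maximal $V$-order. Your substitute --- ascend maximality to the Henselization and invoke ``the classical theory of maximal orders over a Henselian valuation ring'' --- is not an argument: you neither show that maximality ascends to $A_f\otimes_V V_h$ nor justify that a module-finite maximal order over a Henselian valuation ring of arbitrary rank is semihereditary and primary. That last claim is precisely the nontrivial content, and it fails without module-finiteness (see \cite[Theorem 5.7]{M}, quoted in the paper after Corollary~\ref{cor:Second-Maximal-result}), so some specific input like the cited result of \cite{K1} is indispensable. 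Your ``valuation ring $\Rightarrow$ maximal'' half, via the overring correspondence for Dubrovin valuation rings together with integrality of $B$ over $V$, does work, though it imports machinery the paper's citation makes unnecessary.

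In part (3), the step ``$J(S)=J(V)S$, so $e(S_M\!\mid\! F)=1$ for every $M$'' is a non sequitur. When $J(V)$ is non-principal, the equality $J(V)S_M=J(S_M)$ says only that the positive values of $V$ are coinitial in the positive values of $S_M$, which is perfectly compatible with $[\Gamma_{S_M}:\Gamma_V]>1$: take $\Gamma_V$ to be the group of dyadic rationals and $\Gamma_{S_M}=\tfrac{1}{3}\Gamma_V$, where coinitiality holds but the index is $3$. What actually forces $e(S_M\!\mid\! F)=1$ is the conjunction of ``$S$ finitely generated over $V$'' with ``$J(V)$ non-principal,'' via \cite[18.3 \& 18.6]{E} --- and that is exactly the paper's argument. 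As written, your part (3) never uses the finite generation of $S$, which is the standing hypothesis of the proposition, so the conclusion is true but not by the route you give. The converse direction of (3) (Azumaya $\Rightarrow$ extremal) is fine.
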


  \begin{proof}
  If $S$ is finitely generated over $V$, then $A_f$ is also finitely generated over $V$. By remarks following \cite[Proposition 1.8]{K1}, a finitely generated $V$-order in $\Sigma_f$ is a valuation ring if and only if it is a maximal $V$-order.

  If $A_f$ is extremal then, since it is finitely generated over $V$, it must be a semihereditary order by \cite[Proposition 1.8]{K1}. If $J(V)$ is not a principal ideal of $V$, then $A_f$ is a maximal order by Theorem~\ref{thm:The-Main-Nonprincipal-Theorem} hence it is a valuation ring. Further, $H=G$. If in addition $\overline{V}$ is a perfect field, then $K/F$ is tamely ramified and defectless by \cite[Theorem 2(b)]{K3}. By \cite[18.3 \& 18.6]{E}, we conclude that $K/F$ is unramified and defectless, since $S$ is finitely generated over $V$ and $J(V)$ is not a principal ideal of $V$. Therefore $A_f$ is Azumaya over $V$ by Theorem~\ref{thm:The-Azumaya-Theorem}.
  \end{proof}

  This chapter has dealt with general results about the crossed-product order $A_f$.
  To obtain sharper results however, we need to impose conditions on the extension $K/F$, which we shall do in the following chapters.

  \section{The case when $K/F$ is tamely ramified and defectless}
\label{sec:Tame-Case}

The case which is most amenable to analysis occurs when $K/F$ is tamely ramified and defectless. This is because the key to studying the properties of interest to us of the crossed-product order $A_f$ lies in having a workable description of its Jacobson radical. This is accomplished in the following theorem.

\begin{theorem} \label{thm:The-Jacobson-Radical}
If $K/F$ is tamely ramified and defectless, then $J(A_f)=J_G(A_f)$.
\end{theorem}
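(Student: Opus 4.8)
The plan is to exploit the chain $J(S)A_f\subseteq J_G(A_f)\subseteq J(A_f)$ recorded before Proposition~\ref{prop:The-Graded-Radical-Structure}. Since the inclusion $J_G(A_f)\subseteq J(A_f)$ is automatic, it suffices to prove the reverse inclusion, equivalently that the quotient ring $\overline{A}:=A_f/J_G(A_f)$ has zero Jacobson radical. First I would observe that, because $K/F$ is defectless, $\overline{S}:=S/J(S)$ is a finite product $\prod_i\overline{S}_{M_i}$ of residue fields, each finite-dimensional over $\overline{V}$; since by Proposition~\ref{prop:The-Graded-Radical-Structure} the degree-$\sigma$ component of $\overline{A}$ is the finite-dimensional space $S/I_\sigma$ and there are only $|G|$ of them, $\overline{A}$ is a finite-dimensional $\overline{V}$-algebra. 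For such an algebra, zero radical is equivalent to semisimplicity, so the entire problem reduces to establishing that $\overline{A}$ is semisimple.

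Next I would reduce to the case where $V$ is indecomposed in $K$. The point is that $G$ acts transitively on the maximal ideals of $S$, so the residue idempotents of $\overline{S}=\prod_i\overline{S}_{M_i}$ are permuted transitively and all diagonal Peirce blocks of $\overline{A}$ are isomorphic. Concretely this is the Henselization passage used in the proof of Proposition~\ref{prop:A-global-local-result}: applying $-\otimes_V V_h$ and cutting by the idempotent $e$ of the first summand replaces $A_f$ by $A_{f_M}\otimes_U V_h$, for which the base valuation ring is indecomposed. As both the radical and the graded radical are compatible with this faithfully flat base change, it is enough to treat the indecomposed case, where $G^Z=G$ and $S$ has a single maximal ideal $M$.

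In the indecomposed case I would filter through the inertia tower $F\subseteq K^T\subseteq K$, noting that $\mathrm{Gal}(K^T/F)\cong G^Z/G^T$ is the residue extension $\overline{S}_M/\overline{V}$, which is Galois (hence separable) under tameness, while $\mathrm{Gal}(K/K^T)=G^T$ carries the ramification. The separable residue layer $G^Z/G^T$ contributes only semisimple pieces, so the sole potential source of radical in $\overline{A}$ is the totally ramified inertia subgroup $G^T$, reached through the sub-order $S(G^T)=A_{f_{(M)}}$. Here the hypotheses enter through the equivalence recorded in the introduction: $K/F$ is tamely ramified and defectless precisely when $\gcd(\overline{p},|G^T|)=1$, so $|G^T|$ is invertible in $\overline{V}$. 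This invertibility is exactly what a Maschke-type averaging over $G^T$ needs: averaging a would-be module splitting produces a genuine one, forcing the inertia layer of $\overline{A}$ to be separable and killing the radical. Combining the separable residue layer with the tame inertia layer gives $J(\overline{A})=0$, whence $J(A_f)=J_G(A_f)$, recovering \cite[Proposition 3.1(b)]{H} and \cite[Theorem 4.3]{CW2}.

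The main obstacle I anticipate lies in this last step rather than in the two formal reductions. Because $f$ may take values deep inside $J(S)$ (even in $M^2$), after passing to $\overline{A}=A_f/J_G(A_f)$ one must verify, using Lemma~\ref{lem:the-ideal-lemma}, that the reduced inertia layer really is a twisted group algebra of a group whose order is prime to $\overline{p}$, rather than a degenerate object with vanishing structure constants, so that the averaging argument is legitimate. Making this identification precise is where the tameness hypothesis does the essential work.
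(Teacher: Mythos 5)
Your opening reduction is sound and matches the paper's: since each $S/I_{\sigma}$ is (by the Chinese Remainder Theorem) a product of residue fields $S/M\cong\overline{S}_M$, each finite-dimensional over $\overline{V}$, the quotient $\overline{A}:=A_f/J_G(A_f)$ is a finite-dimensional $\overline{V}$-algebra and the theorem amounts to its semisimplicity. Your treatment of the indecomposed case is also essentially the paper's: there $I_{\sigma}=S$ for $\sigma\notin H$ and $I_{\sigma}=J(S)$ for $\sigma\in H$, so $\overline{A}=\sum_{\sigma\in H}\overline{S}\,\overline{x}_{\sigma}$ is an honest crossed product of $H$ over the field $\overline{S}$ with \emph{unit} structure constants, and is semisimple by \cite[Theorem 2(a)]{K3} because $K/L$ ($L$ the fixed field of $H$) is again tamely ramified and defectless. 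This also disposes of the ``degenerate structure constants'' worry in your last paragraph: every component with a non-unit structure constant is wholly absorbed into $J_G(A_f)$, and the relevant group is $H$ (or $H\cap G^T$), not $G^T$.

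The genuine gap is your reduction from the decomposed to the indecomposed case. Cutting $A_f\otimes_VV_h$ by the idempotent $e$ of $S_1$ only controls the Peirce corner $e(A_f\otimes_VV_h)e\cong A_{f_M}\otimes_UV_h$, and semisimplicity of all the diagonal corners $e_i\overline{A}e_i$ does \emph{not} imply semisimplicity of $\overline{A}$: the radical can sit entirely in the off-diagonal pieces $e_i\overline{A}e_j$, $i\neq j$ (upper triangular $2\times 2$ matrices over a field already show this). Those off-diagonal pieces are exactly the images of the components $Sx_{\sigma}$ with $\sigma\notin G^Z$, and they are nonzero in $\overline{A}$ whenever $f(\sigma,\sigma^{-1})\notin M$ for some maximal ideal $M$ --- which is the typical situation; transitivity of the $G$-action gives no fullness of $e_1$ for free, precisely because $\overline{x}_{\sigma}$ need not be invertible for $\sigma\notin H$. (Your asserted compatibility of $J(\cdot)$ and $J_G(\cdot)$ with Henselization is also left unproved and is delicate since $A_f$ need not be finitely generated over $V$, but it is the Peirce-corner issue that is fatal.) The paper closes this gap not by Henselization but by Wilson's minimal-length argument: given $x\in J(A_f)\setminus J_G(A_f)$, right-multiply by a suitable $x_{\sigma^{-1}}$ to arrange a constant term outside a fixed maximal ideal $M$, take $y$ of shortest support with this property, and compare $\sigma_j(s)y-ys$ with $y$ to force $y\in S(G^Z)\cap J(A_f)$; one then shows $y\in J(S(G^Z))\subseteq J(A_{f_M})$ and contradicts the indecomposed-case description of $J(A_{f_M})$. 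Some argument of this kind, expelling the radical from the components outside $G^Z$, is exactly what your proposal is missing.
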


\begin{proof}
First, suppose that $V$ is indecomposed in $K$. Then
$$J_G(A_f)=\sum_{\sigma\in H}J(S)x_{\sigma}+\sum_{\sigma\not\in H}Sx_{\sigma}.$$
Hence $A_f/J_G(A_f)=\sum_{\sigma\in H}\overline{S}\overline{x}_{\sigma}$, a semisimple ring by \cite[Theorem 2(a)]{K3},
since $K/L$, where $L$ is the fixed field of $H$, is also a tamely ramified and defectless extension.
Hence $J_G(A_f)=J(A_f)$ if $V$ is indecomposed in $K$.

In general, let $x=\sum_{\gamma}s_{\gamma}x_{\gamma}\in J(A_f)\setminus J_G(A_f)$. Then there exists a $\sigma\in G$, and a maximal ideal $M$ of $S$, such that $s_{\sigma}, f(\sigma,\sigma^{-1})\not\in M$. We fix this $M$. Observe that $xx_{\sigma^{-1}}=\sum_{\gamma}s_{\gamma}f(\gamma,\sigma^{-1})x_{\gamma\sigma^{-1}}\in J(A_f)$ and has constant term $s_{\sigma}f(\sigma,\sigma^{-1})\not\in M$, and hence $xx_{\sigma^{-1}}\not\in J_G(A_f)$.

Thus the set $\{\sum_{\gamma}s_{\gamma}x_{\gamma}\in J(A_f)\setminus J_G(A_f)\mid s_1\not\in M\}$ is non-empty. Let $y=\sum_{\sigma_i}s_{\sigma_i}x_{\sigma_i}$, with $\sigma_1=1$, be a member of this set with the least length. Let $s\in S$ and $j>1$. Then $y'=\sigma_j(s)y-ys=\sum_{i\not=j}(\sigma_j(s)-\sigma_i(s))s_{\sigma_i}x_{\sigma_i}\in J(A_f)$. If $(\sigma_j(s)-s)s_{\sigma_1}\not\in M$, then $y'\in J(A_f)\setminus J_G(A_f)$ has a shorter length than $y$, which is not possible. Therefore $(\sigma_j(s)-s)s_{\sigma_1}\in M$ for every $s\in S$, and so $\sigma_j(s)-s\in M$ for every $s\in S$. Thus, either $s_{\sigma_j}=0$, or $\sigma_j\in G^Z$, and so $y\in S(G^Z)\cap J(A_f)$.

Since $y\in J(A_f)$ and $S(G^Z)\subseteq A_f$, we have $1-z_1yz_2\in U(A_f)$ for all $z_1,z_2\in S(G^Z)$. Whence
$(1-z_1yz_2)^{-1}\in A_f\cap K(G^Z)=S(G^Z)$ for all $z_1,z_2\in S(G^Z)$. This shows that $y\in J(S(G^Z))$.

Observe that $MS(G^Z)$ is an ideal of $S(G^Z)$. Let $$\psi:S(G^Z)\mapsto S(G^Z)/MS(G^Z)$$ be the natural homomorphism. Then $\psi(J(S(G^Z)))\subseteq J(S(G^Z)/MS(G^Z))$, since $\psi$ is onto. But $S(G^Z)/MS(G^Z)$ is finite dimensional over $\overline{V}$. So there is a positive integer $n$ such that
$J(S(G^Z))^n\subseteq MS(G^Z)$.

Let $I=J(S(G^Z))S_M$, an ideal of $S(G^Z)S_M=A_{f_M}$. Then $I^n\subseteq J(S_M)A_{f_M}\subseteq J(A_{f_M})$, hence $I\subseteq J(A_{f_M})$. This shows that
$y\in J(A_{f_M})=\sum_{\gamma\in H_M}J(S_M)x_{\gamma}+\sum_{\gamma\in G^Z\setminus H_M}S_Mx_{\gamma}.$ This contradicts the fact that the constant term of $y$ is a unit in $S_M$. Therefore $J(A_f)=J_G(A_f)$.
\end{proof}

Initially, the author could prove the statement above only in the case when $V$ is indecomposed in $K$. The minimal length argument employed above was first used by Wilson in \cite[Theorem 4.3]{CW2} to show that, when $V$ is a DVR and $K/F$ is tamely ramified, then $J(A_f)=\sum_{\sigma\in G}I_{\sigma}x_{\sigma}$. Instead of ``going down'' to the residue ring $S(G^Z)/MS(G^Z)$, Wilson ``went up'' to the completion of $A_f$ in order to obtain a contradiction.

If $K/F$ is not tamely ramified and defectless, then we may have $J(A_f)\not=J_G(A_f)$, even when $H=G$ (see \cite[Examples 1 and 2]{K3}).

The graph $\textrm{Gr}(f)$ is \textit{lower subtractive}: given $\sigma H\leq \tau H$, we have
$\sigma H\leq \gamma H\leq \tau H$ if and only if $\sigma^{-1}\gamma H\leq \sigma^{-1}\tau H$ (see \cite{H}). This follows from the cocycle identity $f^{\sigma}(\sigma^{-1}\gamma,\gamma^{-1}\tau)f(\sigma,\sigma^{-1}\tau)=
f(\sigma,\sigma^{-1}\gamma)f(\gamma,\gamma^{-1}\tau)$.

The following theorem generalizes \cite[Theorem 2.15]{CW2}, which in turn generalizes \cite[Theorem 2.3]{H}.
In either case, $V$ was a DVR of $F$.

\begin{theorem} \label{thm:Wilson-Theorem}
Suppose $K/F$ is tamely ramified and defectless. Assume $J(V)$ is a principal ideal of $V$, and $V$ is indecomposed in $K$. Then the following are equivalent:
    \begin{enumerate}
    \item The crossed-product order $A_f$ is semihereditary.
    \item $f(\sigma,\tau)\not\in J(S)^2$ for every $\sigma,\tau\in G$.
    \item $H$ is a normal subgroup of $G$ and $G/H$ is cyclic. Further, either $H=G$, or there exists a $\sigma\in G$ such that $G/H=\langle\sigma H\rangle$, the graph $\textrm{Gr}(f)$ is $H\leq\sigma H\leq\sigma^2 H\leq\cdots\leq\sigma^{\mid G/H\mid-1}H$, and $J(A_f)=x_{\sigma}A_f$.
        \end{enumerate}
\end{theorem}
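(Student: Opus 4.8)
The plan is to prove the cycle of implications $(1)\Rightarrow(2)\Rightarrow(3)\Rightarrow(1)$, exploiting throughout that, since $V$ is indecomposed, $S$ is a valuation ring of $K$ with a single maximal ideal $M=J(S)$, that $J(S)=\pi_S S$ is principal (as $J(V)$ is principal and $S$ is B\'ezout), that $v_M(\pi_S)$ is the least positive element of the value group of $v_M$, and that $G^Z=G$, so $v_M\circ\sigma=v_M$ for every $\sigma\in G$. Under these hypotheses $\le$ and $\le_M$ coincide, so $\textrm{Gr}(f)=\textrm{Gr}(f^M)$, and by Theorem~\ref{thm:The-Jacobson-Radical} we have $J(A_f)=J_G(A_f)=\sum_{\sigma}I_\sigma x_\sigma$. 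Since $M$ is the unique maximal ideal, condition $(2)$ is exactly the statement that $f(\sigma,\tau)\notin M^2$ for all $\sigma,\tau$ and each maximal ideal appearing in Theorem~\ref{thm:fundamental-theorem}. For $(1)\Rightarrow(2)$ I would argue that a semihereditary order is extremal by \cite[Theorem 1.5]{K1}, and that an extremal order equals the left order of its radical; since $J(A_f)=J_G(A_f)$ this gives $O_l(J_G(A_f))=A_f$, whence $(2)$ follows from Theorem~\ref{thm:fundamental-theorem} (this is the one step importing outside machinery on extremal orders).

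The substantive direction is $(2)\Rightarrow(3)$. First I would pin down the $I_\sigma$: by Lemma~\ref{lem:the-ideal-lemma}(3), condition $(2)$ gives $I_\sigma f(\sigma,\sigma^{-1})=J(S)$, so $v_M(I_\sigma)=v_M(\pi_S)-v_M(f(\sigma,\sigma^{-1}))\ge 0$; combined with the fact that $v_M(\pi_S)$ is the smallest positive value, this forces $v_M(f(\sigma,\sigma^{-1}))\in\{0,\,v_M(\pi_S)\}$, that is, $I_\sigma=J(S)$ when $\sigma\in H$ and $I_\sigma=S$ otherwise. By Proposition~\ref{prop:chain-graph}, $\textrm{Gr}(f)$ is a chain. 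If $H=G$ we are in the first alternative of $(3)$. Otherwise let $\sigma H$ be the unique atom; the lower-subtractivity of $\textrm{Gr}(f)$, applied to the intervals $[\sigma^iH,\sigma^jH]$ whose images under left multiplication by $\sigma^{-i}$ are the initial segments $[H,\sigma^{j-i}H]$ of equal length, forces $G/H=\{\sigma^kH:0\le k\le n-1\}$ as an increasing chain with $\sigma^n\in H$. Next I would establish $J(A_f)=x_\sigma A_f$ by matching graded components: the $\mu$-component of $x_\sigma A_f$ is $Sf(\sigma,\sigma^{-1}\mu)$, and $f(\sigma,\sigma^{-1}\mu)\in U(S)$ precisely when $\sigma H\le\mu H$, i.e. when $\mu\notin H$; while for $\mu\in H$ the identity $f(\sigma,\sigma^{-1})=f^\sigma(\sigma^{-1},\mu)f(\sigma,\sigma^{-1}\mu)$ together with $f(\sigma^{-1},\mu)\in U(S)$ gives $v_M(f(\sigma,\sigma^{-1}\mu))=v_M(\pi_S)$. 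In either case the component equals $I_\mu$.

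The normality of $H$ is, I expect, the main obstacle. Since $J(A_f)=x_\sigma A_f$ is two-sided and $x_\sigma\in J(A_f)$, one has $A_f x_\sigma\subseteq J(A_f)=x_\sigma A_f$, hence $x_\sigma^{-1}A_f x_\sigma\subseteq A_f$ and so $A_f\subseteq x_\sigma A_f x_\sigma^{-1}$. Conjugating this inclusion successively by $x_\sigma$ yields the ascending chain $A_f\subseteq x_\sigma A_f x_\sigma^{-1}\subseteq\cdots\subseteq x_\sigma^{n}A_f x_\sigma^{-n}$. But $x_\sigma^{n}$ is a $K^{\#}$-multiple of the unit $x_{\sigma^n}\in U(A_f)$ (as $\sigma^n\in H$), and any $K^{\#}$-multiple normalizes $A_f$ because $v_M\circ\tau=v_M$ makes each scalar $c\tau(c)^{-1}$ a unit of $S$; hence $x_\sigma^{n}A_f x_\sigma^{-n}=A_f$ and the chain collapses to equalities. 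Thus $x_\sigma$ normalizes $A_f$, so $\sigma H\sigma^{-1}=H$, and since $G=\langle\sigma\rangle H$ we conclude $H\trianglelefteq G$ with $G/H=\langle\sigma H\rangle$ cyclic; together with the chain shape and $J(A_f)=x_\sigma A_f$ already obtained, this is $(3)$.

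For $(3)\Rightarrow(1)$, note that $J(A_f)$ equals $x_\sigma A_f$ (or $J(S)A_f=\pi_S A_f$ when $H=G$), a right ideal free of rank one, since $x_\sigma$ (respectively $\pi_S$) is a unit of $\Sigma_f$ and so $a\mapsto x_\sigma a$ is an $A_f$-module isomorphism $A_f\to x_\sigma A_f$; being free it is flat, whence $A_f$ is semihereditary by Lemma~\ref{lem:Coherent-lemma}(2). I anticipate the genuinely hard part to be the passage from the bare chain condition to the rigid cyclic group structure: monotonicity of the valuations along the graph (Lemma~\ref{lem:Wilson-Lemma}) and lower-subtractivity give the set-theoretic description $G/H=\{\sigma^kH\}$ cheaply, but extracting \emph{normality} is where the work lies, and the telescoping-conjugation argument above is precisely what makes it go through over an arbitrary, possibly non-discrete and non-Noetherian, value group rather than only for a DVR.
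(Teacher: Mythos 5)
Your proposal is correct, and its overall skeleton --- the cycle $(1)\Rightarrow(2)\Rightarrow(3)\Rightarrow(1)$, the identification $J(A_f)=J_G(A_f)=\sum_{\sigma}I_{\sigma}x_{\sigma}$, the use of lower subtractivity to extract the chain $H\leq\sigma H\leq\cdots$, and flatness of $x_{\sigma}A_f$ for the return implication --- coincides with the paper's. Two steps differ. For $(1)\Rightarrow(2)$ you route through extremality and Theorem~\ref{thm:fundamental-theorem} rather than redoing the maximal-vertex computation with $x=(1/\pi_S)x_{\sigma}$; since $J(A_f)=J_G(A_f)$ by Theorem~\ref{thm:The-Jacobson-Radical}, this is legitimate and is in fact how the paper argues in the decomposed setting in Theorem~\ref{thm:The-Main-Principal-Theorem}. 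The genuine divergence is the normality of $H$: the paper proves it purely combinatorially and \emph{before} computing $J(A_f)$ --- left multiplication by $h\in H$ preserves the ordering on the finite chain $G/H$ by lower subtractivity, and an order-preserving bijection of a finite chain is the identity, so $h\sigma H=\sigma H$ --- whereas you first establish $J(A_f)=x_{\sigma}A_f$ by matching graded components and then extract normality from the two-sidedness of the radical via the telescoping conjugation $A_f\subseteq x_{\sigma}A_fx_{\sigma}^{-1}\subseteq\cdots\subseteq x_{\sigma}^{n}A_fx_{\sigma}^{-n}=A_f$. Your argument is sound (the component matching does not presuppose normality), but note that the assertion $v_M\circ\tau=v_M$ does not follow from $G^Z=G$ alone: $\tau(S)=S$ only gives that $v_M\circ\tau$ and $v_M$ share a valuation ring, and one must add that the induced order-automorphism of $\Gamma_S$ fixes the finite-index subgroup $\Gamma_V$ pointwise and hence, $\Gamma_S$ being torsion-free, is the identity. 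The paper's combinatorial route avoids this entirely and is the more elementary of the two; yours has the merit of exposing the structural reason for normality, namely that $J(A_f)$ is a principal two-sided ideal generated by a homogeneous element.
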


\begin{proof}
Let $v$ be a valuation on $K$ corresponding to $S$ and let $J(S)=\pi_S S$.

$(1) \Rightarrow (2)$.
Suppose $A_f$ is semihereditary. We will first show that $f(\sigma,\sigma^{-1})\not\in J(S)^2$ for all $\sigma\in G$. Fix $\sigma\in G$. By Lemma~\ref{lem:Wilson-Lemma}, it does no harm to assume that $\sigma H$ is a maximal vertex of $\textrm{Gr}(f)$. Suppose $f(\sigma,\sigma^{-1})\in J(S)^2$. Let $x=(1/\pi_S)x_{\sigma}$. We will now show that $x\in O_l(J(A_f))$. Recall that $J(A_f)=\sum_{\gamma\in H}J(S)x_{\gamma}+\sum_{\gamma\not\in H}Sx_{\gamma}$. If $\tau\in H$, then $x\pi_S x_{\tau}=(\sigma(\pi_S)/\pi_S)f(\sigma,\tau)x_{\sigma\tau}\in J(A_f)$, since $\sigma(\pi_S)/\pi_S\in S$ and $\sigma\tau\not\in H$. If $\tau\not\in H$, then we distinguish two cases.

Case 1: $\sigma\tau\in H$. Then $f(\sigma^{-1},\sigma\tau)\in U(S)$ hence, from the identity $$f^{\sigma^{-1}}(\sigma,\tau)f(\sigma^{-1},\sigma\tau)=f(\sigma^{-1},\sigma),$$ we see that $f(\sigma,\tau)\in J(S)^2$, hence $xx_{\tau}=(f(\sigma,\tau)/\pi_S)x_{\sigma\tau}\in J(A_f)$, since $f(\sigma,\tau)/\pi_S\in J(S)$.

Case 2: $\sigma\tau\not\in H$. Then $f(\sigma,\tau)=f(\sigma,\sigma^{-1}(\sigma\tau))\not\in U(S)$, otherwise we would have $\sigma H\leq \sigma\tau H$ hence, by the maximality of $\sigma H$, we would end up with $\sigma H=\sigma\tau H$. This would contradict the fact that $\tau\not\in H$. Thus $xx_{\tau}\in J(A_f)$.

This shows that $x\in O_l(J(A_f))$, contradicting the fact that, since $A_f$ is semihereditary, $O_l(J(A_f))=A_f$ \cite[Theorem 1.5]{K1}. So $f(\sigma,\sigma^{-1})\not\in J(S)^2$ for all $\sigma\in G$. Since $f^{\sigma}(\sigma^{-1},\sigma\tau)f(\sigma,\tau)=f(\sigma,\sigma^{-1})$, $f(\sigma,\tau)\not\in J(S)^2$ for every $\sigma,\tau\in G$.

$(2) \Rightarrow (3)$. Now suppose $f(\sigma,\tau)\not\in J(S)^2$ for every $\sigma,\tau\in G$. First, we will show that $G/H$ is totally ordered. Let $\sigma,\tau\in G$ and write $\sigma=\tau\gamma$. We may assume $\sigma H\not=\tau H$, so that $\gamma\not\in H$. Then $f(\gamma,\gamma^{-1})\not\in U(S)$ hence, since $f(\gamma,\gamma^{-1})\not\in J(S)^2$, we must have $v(f(\gamma,\gamma^{-1}))=v(\pi_S)$. Since $f(\tau,\gamma)f(\tau\gamma,\gamma^{-1})=f^{\tau}(\gamma,\gamma^{-1})$ and $v(\pi_S)$ is the smallest positive element of $\Gamma_S$, one of $f(\tau,\gamma)=f(\tau,\tau^{-1}(\tau\gamma))$, $f(\tau\gamma,\gamma^{-1})=f(\tau\gamma,(\tau\gamma)^{-1}\tau)$ must be in $U(S)$. Therefore either $\tau H\leq \tau\gamma H=\sigma H$, or $\sigma H=\tau\gamma H\leq \tau H$, so that $G/H$ is totally ordered.

We will now show that $H$ is a normal subgroup of $G$. If $\sigma H\leq \tau H$ and $h\in H$, then $h^{-1}H=H\leq\sigma H\leq\tau H$. By lower subtractivity of the graph of $f$, we construe that $h\sigma H\leq h\tau H$. Thus left multiplication by elements of $H$ preserves the ordering on $G/H$. Since $G/H$ is a finite totally ordered set, we must have $h\sigma H=\sigma H$ for every $\sigma\in G$, $h\in H$. This shows that $H$ is a normal subgroup of $G$.

Suppose $H\not=G$. Since $G/H$ is totally ordered, the set $\{\gamma H\mid\gamma\not\in H\}$ has a unique minimal element, say $\sigma H$. We will show that $H\leq \sigma H\leq\sigma^2H\leq\cdots\leq\sigma^iH\leq\cdots\leq\sigma^{r-1}H$, where $r$ is the smallest positive integer for which $\sigma^r\in H$. The proof is by induction on $i$. We know that $H\leq\sigma H$. Suppose $H\leq\sigma H\leq\sigma^2H\leq\cdots\leq\sigma^{i-1}H$ for some $i<r$. Since  $\sigma^{-1}\sigma^{i-1}H=\sigma^{i-2}H\leq\sigma^{i-1}H=\sigma^{-1}\sigma^iH$ by assumption and $\sigma H\leq\sigma^iH$, we conclude by lower subtractivity that $\sigma H\leq \sigma^{i-1}H\leq\sigma^iH$, hence the claim. We fix $\sigma$.

Let $\gamma\in G$. We will now show that $\gamma H\in\{\sigma^iH\mid 0\leq i<r\}$. Since $G/H$ is totally ordered, we can choose $i\in\{1,2,\ldots,r-1\}$ to be the greatest such that $\sigma^iH\leq\gamma H$. If $\sigma^iH\not=\gamma H$, then $\sigma^{-i}\gamma H\not= H$. Hence $\sigma H\leq \sigma^{-i}\gamma H$. Since $\sigma^iH\leq\gamma H$ and $\sigma^{-i}\sigma^{i+1}H=\sigma H\leq\sigma^{-i}\gamma H$, we have $\sigma^iH\leq\sigma^{i+1}H\leq\gamma H$ by the lower subtractivity of the graph of $f$, contradicting the maximality of $i$. So $\gamma H=\sigma^iH$, $G/H=<\sigma H>$, and the graph of $f$ is $$H\leq\sigma H\leq\sigma^2 H\leq\cdots\leq\sigma^{\mid G/H\mid-1}H.$$

Since $\sigma\not\in H$, we have $x_{\sigma}\in J(A_f)$, as $J(A_f)=\sum_{\gamma\in H}J(S)x_{\gamma}+\sum_{\gamma\not\in H}Sx_{\gamma}$. Therefore $x_{\sigma}A_f\subseteq J(A_f)$. We will now show equality. Let $a\in S$. If $\gamma\in H$, then $v(f(\sigma,\sigma^{-1}\gamma))=v(\pi_S)$, hence $b=(a\pi_S/f(\sigma,\sigma^{-1}\gamma))^{\sigma^{-1}}\in S$ and $a\pi_S x_{\gamma}=x_{\sigma}\cdot bx_{\sigma^{-1}\gamma}\in x_{\sigma}A_f$. If $\gamma\not\in H$, then $\sigma H\leq\gamma H$, hence $f(\sigma,\sigma^{-1}\gamma)\in U(S)$ and
$c=(a/f(\sigma,\sigma^{-1}\gamma))^{\sigma^{-1}}\in S$. We have $ax_{\gamma}=x_{\sigma}\cdot cx_{\sigma^{-1}\gamma}\in x_{\sigma}A_f$. Therefore $J(A_f)=x_{\sigma}A_f$.

$(3) \Rightarrow (1)$. If $H=G$, then the result follows from \cite[Theorem 2(a)]{K3}, otherwise $J(A_f)=x_{\sigma}A_f$ is a projective ideal of $A_f$ and the result follows from Lemma~\ref{lem:Coherent-lemma}.
\end{proof}

The gist of the proof of the theorem above is based on the work of Wilson on crossed-product orders over DVRs. His arguments in \cite[Theorem 2.15]{CW2} hold here almost verbatim and we have deliberately preserved them to illustrate the fact that, when $J(V)$ is a principal ideal of $V$ then the theory of these crossed-products resembles the classical theory when $V$ is a DVR. Naturally, there had to be some departures form his approach to cater to this general situation.

An \textit{invariant valuation ring} is a Dubrovin valuation ring of a division algebra $D$ which
is stable under all inner automorphisms of $U(D)$.

\begin{proposition} \label{prop:Cyclic-Division-Algebra}
Suppose the extension $K/F$ is not necessarily tamely ramified and defectless.
Assume $\mid\!\! H\!\!\mid = 1$ and $V$ is indecomposed in $K$. Then the crossed-product order $A_f$ is semihereditary
if and only if the following two conditions hold:
    \begin{enumerate}
    \item $\Sigma_f$ is a cyclic division algebra.
    \item $A_f$ is an invariant valuation ring of $\Sigma_f$.
    \end{enumerate}
If in addition $J(V)$ is a principal ideal of $V$ and $K\not=F$, then the following statements are equivalent:
    \begin{enumerate}
    \item The crossed-product order $A_f$ is semihereditary.
    \item  $f(\gamma,\tau)\not\in J(S)^2$ for every $\gamma,\tau\in G$.
    \item There is a generator $\sigma$ of the group $G$ such that
   $J(A_f)=x_{\sigma}A_f$, and the graph $\textrm{Gr}(f)$ is the chain
    $$1=H\leq\sigma H\leq\sigma^2 H\leq\cdots\leq\sigma^{\mid G\mid-1}H.$$
    \end{enumerate}
\end{proposition}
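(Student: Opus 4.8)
The plan is to separate the statement into a hypothesis-free structural core and two independent finishing arguments. \emph{First} I would establish the radical of $A_f$ without invoking tameness. Since $V$ is indecomposed in $K$, $S$ is a valuation (hence local B\'{e}zout) domain with unique maximal ideal $J(S)$, so by Lemma~\ref{lem:the-ideal-lemma} each $I_\sigma$ equals $J(S)$ or $S$; as $|H|=1$, Lemma~\ref{lem:the-ideal-lemma}(5) gives $I_1=J(S)$ and $I_\sigma=S$ for $\sigma\neq1$. Proposition~\ref{prop:The-Graded-Radical-Structure} then yields $J_G(A_f)=J(S)x_1+\sum_{\sigma\neq1}Sx_\sigma$, whence $A_f/J_G(A_f)\cong\overline S$ is a field. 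Semisimplicity of $\overline S$ forces $J(A_f)\subseteq J_G(A_f)$, and the reverse inclusion is automatic, so $J(A_f)=J_G(A_f)$ and $A_f$ is \emph{primary} (indeed local). This holds with no tameness or defectlessness and with no hypothesis on $J(V)$.

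\emph{Part~1.} The reverse implication is immediate: if $\Sigma_f$ is a division algebra and $A_f$ an invariant valuation ring of it, then $A_f$ is by definition a Dubrovin valuation ring and hence semihereditary. For the forward implication, a semihereditary $A_f$ is, by the core step, a semihereditary primary order, i.e.\ a Dubrovin valuation ring of $\Sigma_f$ with residue ring the \emph{field} $\overline S$; by the structure theory of Dubrovin valuation rings \cite{D1} this forces $\Sigma_f$ to be a division algebra (a Dubrovin valuation ring with division residue ring is total). Cyclicity of $G$ I would obtain by a dichotomy on $J(V)$: if $J(V)$ is non-principal, Theorem~\ref{thm:The-Main-Nonprincipal-Theorem}(2) gives $H=G$, hence $G=\{1\}$; if $J(V)$ is principal, cyclicity is precisely the content of Part~2 (the case $K=F$ being trivial). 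Finally, writing $G=\langle\sigma\rangle$ and $A_f=\{q\in\Sigma_f\mid w(q)\geq0\}$ for the valuation $w$ attached to the total valuation ring $A_f$, the relation $x_\sigma kx_\sigma^{-1}=\sigma(k)$ together with $v\circ\sigma=v$ shows that $w(x_\sigma)$ commutes with $\Gamma_S$ in the value group; since the powers $w(x_\sigma^i)$ lie in distinct cosets of $\Gamma_S$, this value group is $\langle\Gamma_S,w(x_\sigma)\rangle$, which is abelian, so $w$ is invariant and $A_f$ is an invariant valuation ring.

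\emph{Part~2.} Here I would run the proof of Theorem~\ref{thm:Wilson-Theorem} with $H=\{1\}$, the one substitution being that the radical description $J(A_f)=J(S)x_1+\sum_{\sigma\neq1}Sx_\sigma$ now comes from the core step rather than from tameness. For $(1)\Rightarrow(2)$: a semihereditary $A_f$ is extremal, so $O_l(J(A_f))=O_l(J_G(A_f))=A_f$ by \cite{K1}, and Theorem~\ref{thm:fundamental-theorem} (using $J(V)$ principal) gives $f(\gamma,\tau)\notin M^2=J(S)^2$. For $(2)\Rightarrow(3)$: writing $\sigma=\tau\gamma$ with $\gamma\neq1$ one has $v(f(\gamma,\gamma^{-1}))=v(\pi_S)$, and the cocycle identity together with lower subtractivity forces $G=G/H$ to be a chain; as $H=\{1\}$ is normal, $G=\langle\sigma\rangle$ is cyclic, the graph is the asserted chain, and the computation in Theorem~\ref{thm:Wilson-Theorem} gives $J(A_f)=x_\sigma A_f$. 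For $(3)\Rightarrow(1)$: $x_\sigma A_f$ is principal hence flat, so $A_f$ is semihereditary by Lemma~\ref{lem:Coherent-lemma}(2).

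The step I expect to be the main obstacle is the passage, in the forward direction of Part~1, from ``$A_f$ is a local Dubrovin valuation ring'' to ``$\Sigma_f$ is a division algebra'': this is the only point that is not internal bookkeeping and relies on the external structure theory of Dubrovin valuation rings \cite{D1}. A minor technical loose end is confirming that $J(S)=\pi_S S$ is principal whenever $J(V)$ is (a value-group statement needed in $(2)\Rightarrow(3)$); the invariance of $A_f$, by contrast, falls out cheaply from the value-group computation once cyclicity is in hand.
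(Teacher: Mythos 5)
Your proposal follows the paper's proof almost step for step: the paper likewise first observes that $\mid\!\! H\!\!\mid=1$ and indecomposability force $A_f/J_G(A_f)=\overline{S}$, a field, hence $J(A_f)=J_G(A_f)$ and $A_f$ is primary (no tameness needed); it likewise disposes of the non-principal case via Theorem~\ref{thm:The-Main-Nonprincipal-Theorem} (giving $G=H=1$); it likewise gets $f(\gamma,\tau)\not\in J(S)^2$ from extremality, $O_l(J_G(A_f))=A_f$ and Theorem~\ref{thm:fundamental-theorem}; and it explicitly defers the cyclicity, the chain structure of $\textrm{Gr}(f)$, and $J(A_f)=x_{\sigma}A_f$ to the argument of Theorem~\ref{thm:Wilson-Theorem}, exactly as you do. So the decomposition, the core radical computation, and Part~2 are the paper's proof.

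The one place you substitute your own argument for the paper's is the passage from ``$A_f$ is an integral Dubrovin valuation ring with $\overline{A}_f$ a field'' to ``$\Sigma_f$ is a division algebra and $A_f$ is an \emph{invariant} valuation ring,'' and that is also the one place where your reasoning has a genuine defect. The division-algebra half is fine (matrix size of $\Sigma_f$ equals that of $\overline{A}_f$), but your invariance argument is circular as written: for a total valuation ring $B$ of a division algebra, the quotient $\Sigma_f^{\#}/U(B)$ is a (totally ordered) \emph{group} only when $U(B)$ is normal in $\Sigma_f^{\#}$, i.e.\ only when $B$ is already invariant; so one cannot introduce a group-valued $w$, compute that its image is $\langle\Gamma_S,w(x_{\sigma})\rangle$ and abelian, and then conclude invariance. (Moreover, not every element of $\Sigma_f^{\#}$ has the form $kx_{\sigma}^{\,i}$, so even identifying the putative value group with $\langle\Gamma_S,w(x_{\sigma})\rangle$ needs an argument.) The paper avoids this entirely by citing the structure theory: by \cite[Theorem F, Theorem G, \& Corollary G]{Wa}, an integral Dubrovin valuation ring of $\Sigma_f$ whose residue ring has matrix size one forces $\Sigma_f$ to be a division algebra and $A_f$ to be invariant. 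Either cite that, or, if you want a hands-on proof of invariance, show directly that $x_{\sigma}A_fx_{\sigma}^{-1}=A_f$ using the chain structure of $\textrm{Gr}(f)$ (which gives $v(f(\sigma^i,\sigma^j))=v(f(\sigma^j,\sigma^i))$ for all $i,j$, so conjugation by $x_{\sigma}$ multiplies each $x_{\tau}$ by a unit of $S$) and then note that $K^{\#}$ and $x_{\sigma}$ generate enough of $\Sigma_f^{\#}$ via $J(A_f)=x_{\sigma}A_f=A_fx_{\sigma}$. Everything else in your write-up is correct and coincides with the paper's route.
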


\begin{proof}
Suppose $J(V)$ is non-principal ideal of $V$. If $A_f$ is semihereditary, then $G=H=1$ by Theorem~\ref{thm:The-Main-Nonprincipal-Theorem} and we are done. From now on, suppose $J(V)$ is a principal ideal of $V$.
The given hypothesis implies that $A_f/J_G(A_f)=\overline{S}$, a field, hence $J(A_f)=J_G(A_f)$ and $A_f$ is primary. If $A_f$ is semihereditary, then it would be a valuation ring of $\Sigma_f$, hence $f(\gamma,\tau)\not\in J(S)^2$ for every $\gamma,\tau\in G$ by Corollary~\ref{cor:First-Maximal-result}. Further, since $A_f$ is an integral Dubrovin valuation ring in this case and the matrix size of $\overline{A}_f$ is one, from \cite[Theorem F, Theorem G, \& Corollary G]{Wa}  we conclude that $\Sigma_f$ must be a division algebra, and $A_f$ is an invariant valuation ring of $\Sigma_f$. The rest of the arguments are as in Theorem~\ref{thm:Wilson-Theorem} above.
\end{proof}

If $W$ is a valuation ring of $F$ such that $V\subsetneqq W$, let $R=WS$. A routine argument shows that $R$ is the integral closure of $W$ in $K$ (see, for example, \cite[Lemma 2.3]{K6}). Let $W_1,W_2,\ldots,W_m$ be all the extensions of $W$ to $K$. Let $V_{ij}$ be all the extensions of $V$ to $K$ such that $WV_{ij}=W_i$, $i=1,2,\ldots, m$. Such $V_{ij}$ exist for each $i$: let $\widetilde{V}'$ be a valuation ring of $\overline{W}_i$ lying over $\widetilde{V}=V/J(W)$. Then $V'=\{t\in W_i\mid t+J(W_i)\in\widetilde{V}'\}$ is a valuation ring of $K$ lying over $V$ such that $WV'=W_i$. Clearly, the $V_{ij}$ are all the extensions of $V$ to $K$, and we have
  $$J(S)=\bigcap_i\bigcap_jJ(V_{ij})\supseteq \bigcap_iJ(W_i)=J(R).$$

We record for later use the fact that $J(R)\subseteq J(S)\subseteq I_{\sigma}$ for each $\sigma\in G$.

Now let $M$ be a maximal ideal of $S$, and let $N=J(WS_M)\cap R$, a maximal ideal of $R$.
Since $J(R_N)\subseteq J(S_M)$,  $G^T(R_N\!\!\mid\!\! F)=\{\sigma\in G\mid \sigma(x)-x\in J(R_N), \forall x\in R_N\}$ is a subgroup of $\{\sigma\in G\mid \sigma(x)-x\in J(S_M), \forall x\in S_M\}=G^T(S_M\!\!\mid\!\! F)$. Therefore, if $V$ is tamely ramified and defectless in $K$ then so is $W$ by \cite[Lemma 1]{K3}: if $0<q:=\textrm{Char(}\overline{W}\textrm{)}$, then $q=\textrm{Char(}\overline{V}\textrm{)}$, hence $\gcd(q,\mid\!\! G^T(R_N\!\!\mid\!\! F)\!\!\mid)=1$ since $G^T(R_N\!\!\mid\!\! F)$ is a subgroup of $G^T(S_M\!\!\mid\!\! F)$.

Denote by $B_f$ the $W$-order $WA_f=\sum_{\sigma\in G}Rx_{\sigma}$ and, for each $\sigma\in G$, let $J_{\sigma}=\bigcap N$, where the intersection is over all the maximal ideals $N$ of $R$ for which $f(\sigma,\sigma^{-1})\not\in N$. Then $J(B_f)=\sum_{\sigma\in G}J_{\sigma}x_{\sigma}$ by Theorem~\ref{thm:The-Jacobson-Radical}.

Of particular interest is the following situation: suppose $J(V)$ is a principal ideal of $V$ and $\textrm{rank(}V\textrm{)}>1$.  Let $P=\bigcap_{n\geq 1}J(V)^n$. Then $P$ is a prime ideal of $V$,
$W=V_P$ is a minimal overring of $V$ in $F$, and $\widetilde{V}=V/J(W)$
is a DVR of $\overline{W}$. We shall henceforth refer to $W$ in this case as the minimal overring of $V$ in $F$.

\begin{theorem} \label{thm:The-Main-Principal-Theorem}
Suppose $K/F$ is tamely ramified and defectless and $J(V)$ is a principal ideal of $V$. Then the following are equivalent:
\begin{enumerate}
    \item $A_f$ is semihereditary.
    \item $A_f$ is an extremal $V$-order.
    \item $O_l(J(A_f))=A_f$.
    \item If $M$ is a maximal ideal of $S$, then $f(\sigma,\sigma^{-1})\not\in M^2$ for all $\sigma\in G$.
    \item If $M$ is a maximal ideal of $S$, then $f(\sigma,\tau)\not\in M^2$ for all $\sigma,\tau\in G$.
    \item $J(A_f)$ is a projective ideal of $A_f$.
     \end{enumerate}
When this happens then, for each maximal ideal $M$ of $S$, the graph $\textrm{Gr}(f^M)$ is a chain.
\end{theorem}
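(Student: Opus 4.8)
The plan is to establish the cycle of equivalences $(1)\Rightarrow(2)\Rightarrow(3)\Rightarrow(4)\Leftrightarrow(5)\Rightarrow(6)\Rightarrow(1)$, leveraging the two results that do the heavy lifting under the tame-defectless hypothesis, namely Theorem~\ref{thm:The-Jacobson-Radical} (which gives $J(A_f)=J_G(A_f)$) and Theorem~\ref{thm:fundamental-theorem}. Since $K/F$ is tamely ramified and defectless, Theorem~\ref{thm:The-Jacobson-Radical} tells us $J(A_f)=J_G(A_f)=\sum_{\sigma\in G}I_{\sigma}x_{\sigma}$ by Proposition~\ref{prop:The-Graded-Radical-Structure}, so throughout I can replace $J(A_f)$ by its explicit graded description. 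The implication $(1)\Rightarrow(2)$ is immediate from \cite[Theorem 1.5]{K1}, and $(2)\Rightarrow(3)$ follows because extremality forces $O_l(J(A_f))=A_f$ (again \cite[Theorem 1.5]{K1}, or \cite[Proposition 1.4]{K1}): any order $B$ with $B\supseteq A_f$ and $J(B)\supseteq J(A_f)$ must equal $A_f$, and $O_l(J(A_f))$ is precisely such an order by \cite[Corollary 1.3]{K1}.

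For $(3)\Rightarrow(4)$ and $(4)\Leftrightarrow(5)$ I would invoke Theorem~\ref{thm:fundamental-theorem} directly: since $J(V)$ is principal and $J(A_f)=J_G(A_f)$, the statement $O_l(J_G(A_f))=A_f$ is exactly condition $(3)$, and Theorem~\ref{thm:fundamental-theorem} asserts this is equivalent to $f(\sigma,\sigma^{-1})\notin M^2$ for all $\sigma$ and all maximal $M$ (condition $(4)$), which is in turn equivalent to $f(\sigma,\tau)\notin M^2$ for all $\sigma,\tau$ and all $M$ (condition $(5)$). Thus most of the equivalence is a direct citation of the fundamental theorem, the main engine assembled earlier. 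The remaining work is the loop back to semihereditariness.

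The crux is $(5)\Rightarrow(6)\Rightarrow(1)$. For $(6)\Rightarrow(1)$ I would appeal to Lemma~\ref{lem:Coherent-lemma}(2): if $J(A_f)$ is projective then it is flat, hence $A_f$ is semihereditary. The genuinely substantive step is $(5)\Rightarrow(6)$, showing that the hypothesis $f(\sigma,\tau)\notin M^2$ for all $M$ forces $J(A_f)=J_G(A_f)=\sum_{\sigma}I_{\sigma}x_{\sigma}$ to be a projective $A_f$-module. I anticipate this being the main obstacle. The strategy I would pursue is to exploit Lemma~\ref{lem:the-ideal-lemma}(3): under the hypothesis $f(\sigma,\sigma^{-1})\notin M^2$ we have $I_{\sigma}f(\sigma,\sigma^{-1})=J(S)$, so each graded piece $I_{\sigma}x_{\sigma}$ is, up to the unit-free cocycle correction, a copy of $J(S)$. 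With $J(V)$ principal and $S$ a B\'ezout domain, the ideal $J(S)=\pi_S S$ is principal (as noted in the proof of Theorem~\ref{thm:fundamental-theorem}), so I would try to exhibit an explicit generator or, more cleanly, a local splitting: localize at each maximal ideal $M$ of $S$ and show $J(A_{f_M})=x_{\sigma}A_{f_M}$ for a suitable $\sigma$ realizing the maximal vertex of $\textrm{Gr}(f^M)$, mirroring the computation in Theorem~\ref{thm:Wilson-Theorem}(2)$\Rightarrow$(3). The difficulty is that $V$ need not be indecomposed in $K$, so $\textrm{Gr}(f^M)$ may differ from $\textrm{Gr}(f)$ and one cannot simply quote the global principal-ideal description of $J(A_f)$; instead one must patch the local projectivity at each $M$ into global projectivity, which is exactly where the coherence of $A_f$ (Lemma~\ref{lem:Coherent-lemma}(1)) and the B\'ezout property of $S$ must be used carefully.

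Finally, the closing assertion that $\textrm{Gr}(f^M)$ is a chain for each $M$ is an immediate consequence of condition $(5)$ together with Proposition~\ref{prop:chain-graph}, whose hypothesis $f(\sigma,\tau)\notin M^2$ is precisely condition $(5)$; so once the equivalences are in hand this last line requires only a citation.
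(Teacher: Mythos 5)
Your overall architecture matches the paper's: the implications $(1)\Rightarrow(2)\Rightarrow(3)$ via \cite[Theorem 1.5]{K1} and \cite[Proposition 1.4]{K1}, the equivalence of $(3)$, $(4)$, $(5)$ via Theorem~\ref{thm:The-Jacobson-Radical} and Theorem~\ref{thm:fundamental-theorem}, the implication $(6)\Rightarrow(1)$ via Lemma~\ref{lem:Coherent-lemma}, and the final chain assertion via Proposition~\ref{prop:chain-graph} are all exactly what the paper does. But your treatment of the crux, $(5)\Rightarrow(6)$ in the decomposed case, is a genuine gap: you correctly identify it as the main obstacle and then only gesture at a strategy (``localize at each maximal ideal $M$ of $S$ \dots and patch the local projectivity into global projectivity'') without carrying it out. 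Worse, that strategy as stated does not obviously make sense: $A_{f_M}=\sum_{\sigma\in G^Z}S_Mx_\sigma$ is \emph{not} a central localization of $A_f$ (its center is $U=S_M\cap K^Z$, not $V$, and it arises only as $e(A_f\otimes_VV_h)e$ for an idempotent $e$ in the Henselization), so projectivity of $J(A_f)$ as an $A_f$-module cannot be checked on the rings $A_{f_M}$ by any standard local--global principle. There is no faithfully flat cover of $A_f$ by the $A_{f_M}$ to patch along.

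The paper's actual route is quite different and worth noting. When $\mathrm{rank}(V)=1$ the principality of $J(V)$ makes $V$ a DVR and classical hereditary order theory finishes the job. When $\mathrm{rank}(V)>1$ one passes to the minimal overring $W=V_P$ of $V$ (where $P=\bigcap_nJ(V)^n$), sets $B_f=WA_f$ and $R=WS$, and uses condition $(5)$ to show $f(\sigma,\tau)\in U(R)$, whence $J(B_f)=\sum_\sigma J(R)x_\sigma\subseteq J(A_f)$. The hard technical step is then to show that the integral closure of the DVR $\widetilde{V}=V/J(W)$ in $Z(\overline{B}_f)$ is contained in $A_f/J(B_f)$; this requires an idempotent/centrality computation in $\overline{B}_f=\bigoplus_iR/N_i$-graded form to reduce to the decomposition group, followed by an appeal to Theorem~\ref{thm:Wilson-Theorem} applied to $A_{f_M}$ and the argument of \cite[Theorem 2]{K2}. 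One concludes that $A_f/J(B_f)$ is hereditary, so $J(A_f)/J(B_f)$ is projective over it, and finally lifts this to projectivity of $J(A_f)$ over $A_f$ using $J(A_f)B_f=B_f$ and the argument of \cite[Lemma 4.11]{M}. In short: the paper goes \emph{up} to an overring of the center and \emph{down} to a residue order over a DVR, rather than sideways to the localizations $S_M$. Your plan would need to be replaced by something of this kind to close the gap.
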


\begin{proof} If $A_f$ is semihereditary $V$-order then it is an extremal $V$-order \cite[Theorem 1.5]{K1}, and if $A_f$ is an extremal $V$-order then $O_l(J(A_f))=A_f$ \cite[Proposition 1.4]{K1}. But $J(A_f)=J_G(A_f)$, by Theorem~\ref{thm:The-Jacobson-Radical}. Therefore conditions (3), (4), and (5) are all equivalent, by Theorem~\ref{thm:fundamental-theorem}. Now we will show that if (3) and (5) are true, then (6) is also true.
The conclusion holds when $V$ is indecomposed in $K$, by  Theorem~\ref{thm:Wilson-Theorem}, so let us assume that
$V$ decomposes in $K$.

Given that $O_l(J(A_f))=A_f$, if the Krull dimension of $V$ is one, then $V$ is a DVR and $A_f$ is hereditary and so $J(A_f)$ is a projective ideal of $A_f$ and we are done. So assume the dimension is greater than one. Let $W$ be the minimal overring of $V$ in $F$. Set $B_f=WA_f$ and $R=WS$, as above. The condition that $f(\sigma,\tau)\not\in M^2$ for all $\sigma,\tau\in G$ and every maximal ideal $M$ of $S$ implies that $f(\sigma,\tau)\in U(WS_M)$ for all $\sigma,\tau\in G$ and every maximal ideal $M$, which in turn implies that $f(\sigma,\tau)\in U(R)$ for all $\sigma,\tau\in G$. Thus $J_{\sigma}=J(R)$ for every $\sigma\in G$, hence $J(B_f)=\sum_{\sigma\in G}J_{\sigma}x_{\sigma}=\sum_{\sigma\in G}J(R)x_{\sigma}\subseteq\sum_{\sigma\in G}I_{\sigma}x_{\sigma}=J(A_f)$.

We first show that the integral closure of $\widetilde{V}=V/J(W)$ in $Z(\overline{B}_f)$ is contained in $A_f/J(B_f)$. So let $y\in B_f$ be an element satisfying the condition that $\overline{y}=y+J(B_f)\in Z(\overline{B}_f)$ and $\overline{y}$ is integral over $\widetilde{V}$. Write $y=\sum_{\sigma\in G}b_{\sigma}x_{\sigma}$ with $b_{\sigma}\in R$. We will show that $y\in A_f$ by showing that $b_{\sigma}\in S_M$ for each maximal ideal $M$ of $S$.

Fix a maximal ideal $M$ of $S$. Let $N=J(WS_M)\cap R$, a maximal ideal of $R$.
Since $\overline{B}_f=\sum_{\sigma\in G}\overline{R}\tilde{x}_{\sigma}$, we may write $\overline{y}=
\sum_{\sigma\in G}\overline{b}_{\sigma}\tilde{x}_{\sigma}$ with $\overline{b}_{\sigma}=b_{\sigma}+J(R)\in\overline{R}$. We have
$\overline{R}=R_1\oplus R_2\oplus\cdots\oplus R_n$, where $R_i=R/N_i$, and $N=N_1,N_2,\ldots,N_n$ are the maximal ideals of $R$. Let $e_i$ be the multiplicative identity of $R_i$. Let $\tau\in G\setminus G^Z(R_N\!\!\mid\!\! F)$.
We want to show that $b_{\tau}\in J(R)$.

Suppose there exists an $i_0$ such that $b_{\tau}\not\in N_{i_0}$. Choose a $\gamma\in G$ satisfying $N^{\gamma}=N_{i_0}$, so that we have $\gamma^{-1}(b_{\tau})\not\in N$. Then, since
$\overline{b}_{\tau}\tilde{x}_{\tau}\in Z(\overline{B}_f)$, we have
$(e_1\tilde{x}_{\gamma^{-1}})(\overline{b}_{\tau}\tilde{x}_{\tau})=
(\overline{b}_{\tau}\tilde{x}_{\tau})(e_1\tilde{x}_{\gamma^{-1}})$, and therefore
$$e_1\gamma^{-1}(\overline{b}_{\tau})\overline{f(\gamma^{-1},\tau)}=
\tau(e_1)\overline{b}_{\tau}\overline{f(\tau,\gamma^{-1})}.$$
This is a contradiction, since $e_1\gamma^{-1}(\overline{b}_{\tau})\overline{f(\gamma^{-1},\tau)}\not= \overline{0}$ but $\tau(e_1)=e_j$ for some $j\not=1$. Therefore $y=\sum_{\sigma\in G^Z(R_N\mid F)}b_{\sigma}x_{\sigma}+ t$, where $t\in J(B_f)$.
Let $y'=\sum_{\sigma\in G^Z(R_N\mid F)}b_{\sigma}x_{\sigma}$ and let $B_{f_M}=WA_{f_M}$. Since $G^Z(R_N\!\!\mid\!\! F)\subseteq G^Z(S_M\!\!\mid\!\! F)$, we see that $y'\in B_{f_M}$; clearly, $y'+J(B_{f_M})$ is an element of $Z(\overline{B}_{f_M})$ integral over $\widetilde{V}$.

By Theorem~\ref{thm:Wilson-Theorem}, $A_{f_M}$ is semihereditary,
since $f_M(\tau,\gamma)\not\in M^2$ for every $\tau,\gamma\in G^Z(S_M\!\!\mid\!\! F)$, hence $A_{f_M}$ is extremal in $\Sigma_{f_M}$. The argument in \cite[Theorem 2]{K2} shows that $y'\in A_{f_M}$. Since $J(R)\subseteq S$, the preceding discussion shows that $b_{\sigma}\in S_M$ for each $\sigma\in G$. Because the choice of the maximal ideal $M$ of $S$ was arbitrary, we conclude that $y\in A_f$ as claimed.

Therefore we have the following: $WA_f=B_f$, $J(B_f)\subseteq J(A_f)$, $O_l(J(A_f))=A_f$, and the integral closure of $\widetilde{V}$ in $Z(\overline{B}_f)$ is contained in $A_f/J(B_f)$. The argument in \cite[Theorem 2]{K2} now shows that $A_f/J(B_f)$ is a hereditary ring, hence $J(A_f)/J(B_f)$ is a projective ideal of $A_f/J(B_f)$. But $B_f=WA_f=(WJ(V))A_f=(J(V)A_f)W\subseteq J(A_f)W\subseteq J(A_f)B_f\subseteq B_f$, hence $J(A_f)B_f=B_f$. The argument in \cite[Lemma 4.11]{M} shows that $J(A_f)$ is a projective ideal of $A_f$.

$(6)\Rightarrow (1)$: This follows from Lemma~\ref{lem:Coherent-lemma}.

The last assertion follows from Proposition~\ref{prop:chain-graph}.
\end{proof}

When $V$ is a DVR, Theorem~\ref{thm:The-Main-Principal-Theorem} takes on a particularly elegant form:

\begin{corollary} \label{cor:The-DVR-Case}
 If $V$ is a DVR and $K/F$ is tamely ramified, then the crossed-product order $A_f$ is hereditary
 if and only if all the values of the two-cocycle $f$ are square-free in $S$.
\end{corollary}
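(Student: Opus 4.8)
The plan is to obtain the corollary as the specialization of Theorem~\ref{thm:The-Main-Principal-Theorem} to the case of a DVR, after reconciling three cosmetic mismatches between the two statements: the theorem hypothesizes that $K/F$ be tamely ramified \emph{and} defectless while the corollary posits only tame ramification; the theorem's conclusion is phrased for semihereditary orders whereas the corollary speaks of hereditary orders; and condition (5) of the theorem is couched in terms of $f(\sigma,\tau)\notin M^2$ rather than square-freeness. Once these are settled, the equivalence (1)$\Leftrightarrow$(5) of Theorem~\ref{thm:The-Main-Principal-Theorem}, together with the fact that a DVR has principal maximal ideal, gives the result directly.

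First I would observe that defectlessness is automatic here. Since $V$ is a DVR and $K/F$ is Galois, hence separable, passing to the completion gives $K\otimes_F\hat{F}\cong\prod_i\hat{K}_i$, a finite product of fields, each $\hat{K}_i$ a finite separable extension of the complete discretely valued field $\hat{F}$; complete discretely valued fields are defectless, so $[\hat{K}_i:\hat{F}]=e_if_i$, and comparing $\hat{F}$-dimensions yields $[K:F]=\sum_i e_if_i$. (Alternatively one may simply cite the standard valuation theory of \cite[\S 18]{E}.) Hence ``$V$ a DVR and $K/F$ tamely ramified'' already delivers ``tamely ramified and defectless,'' and all hypotheses of Theorem~\ref{thm:The-Main-Principal-Theorem} are in force.

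Next I would reconcile the remaining two points. Because $V$ is a DVR, $S$ is module-finite over $V$, so $A_f=\sum_{\sigma}Sx_{\sigma}$ is a finitely generated $V$-module and therefore a two-sided Noetherian ring; in a Noetherian ring every one-sided ideal is finitely generated, so ``semihereditary'' and ``hereditary'' coincide, and $A_f$ is hereditary if and only if it satisfies condition (1) of the theorem. Moreover $S$, being the integral closure of a DVR in a finite separable extension, is a semilocal Dedekind domain, hence a principal ideal domain with finitely many maximal ideals; for such an $S$ the usual notion of square-freeness of $s\in S^{\#}$ is precisely $v_M(s)\le 1$ for every maximal ideal $M$, that is, $s\notin M^2$ for all $M$. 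Applying this with $s=f(\sigma,\tau)$ identifies ``all the values of $f$ are square-free in $S$'' verbatim with condition (5). Chaining these observations with the equivalence (1)$\Leftrightarrow$(5) finishes the proof.

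The hard part is not really hard: every step beyond invoking Theorem~\ref{thm:The-Main-Principal-Theorem} is formal bookkeeping. The one genuinely non-formal input is the automatic defectlessness of a separable extension of a DVR, so the only care I would take is to make that completion (or citation) argument airtight and to state the square-free convention explicitly, so that its equivalence with the $M^2$-condition is beyond doubt.
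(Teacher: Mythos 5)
Your proposal is correct and follows exactly the route the paper intends: the corollary is stated as the immediate DVR specialization of Theorem~\ref{thm:The-Main-Principal-Theorem} (equivalence of (1) and (5)), with the paper leaving unstated the three bookkeeping points you supply — automatic defectlessness of a separable extension of a DVR, the coincidence of hereditary and semihereditary for the Noetherian ring $A_f$, and the translation of square-freeness in the semilocal PID $S$ into the condition $f(\sigma,\tau)\notin M^2$ for all maximal ideals $M$. Nothing further is needed.
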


 \begin{remark} \label{rem:Kessler-Wilson}
 Suppose $V$ is a DVR tamely ramified in $K$. Then \cite[Theorem 4.14]{CW2} says that, if for every maximal ideal $M$ of $S$ there exists a ``nice'' set of right coset representatives of $G^Z(S_M\!\!\mid\!\! F)$ in $G$ as described in Section~\ref{sec:intro}, then the crossed-product order $A_f$ is hereditary if and only if $f(\sigma,\tau)\not\in M^2$ for every
 maximal ideal $M$ of $S$, and for every $\sigma,\tau\in G^Z(S_M\!\!\mid\!\! F)$. The presence of a ``nice'' set of right coset representatives is not necessary for $A_f$ to be hereditary (see \cite[Example]{K5}); indeed, Theorem~\ref{thm:The-Main-Principal-Theorem} and Corollary~\ref{cor:The-DVR-Case} say that the precise condition for $A_f$ to be hereditary is that, as $M$ runs through the maximal ideals of $S$, $f(\sigma,\tau)\not\in M^2$ not just for $\sigma,\tau\in G^Z(S_M\!\!\mid\!\! F)$, but rather \textit{for all} $\sigma,\tau\in G$. This small apparent oversight seems to have occurred in \cite[\S 4.3]{Ke} as well. We will revisit this issue later in Corollary~\ref{cor:Kessler-characterization-completed} below.
 \end{remark}

Conditions (1),(2),(3) and (6) of the theorem above again show that the behavior of the crossed-product order $A_f$ when $J(V)$ is a principal ideal of $V$ mirrors that of classical orders over DVRs. We shall pursue this theme throughout this paper. Here is one more piece of evidence for such an assertion, which follows from condition (2) above:

\begin{corollary} \label{cor:Second-Maximal-result}
Suppose $J(V)$ is a principal ideal of $V$ and $K/F$ is tamely ramified and defectless. If the crossed-product order $A_f$ is a maximal $V$-order, then it is semihereditary.
\end{corollary}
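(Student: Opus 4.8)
The plan is to reduce the corollary to the equivalence (1)$\Leftrightarrow$(2) in Theorem~\ref{thm:The-Main-Principal-Theorem}. The hypotheses stated here, namely that $K/F$ is tamely ramified and defectless and that $J(V)$ is a principal ideal of $V$, are precisely those under which that theorem applies, so it suffices to show that $A_f$ is an extremal $V$-order.

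The key observation is that a maximal $V$-order is automatically extremal. Recall that $A_f$ is extremal if every $V$-order $B$ in $\Sigma_f$ with $B\supseteq A_f$ and $J(B)\supseteq J(A_f)$ satisfies $B=A_f$. But if $A_f$ is maximal among the $V$-orders of $\Sigma_f$, then \emph{any} overorder $B\supseteq A_f$ already forces $B=A_f$; in particular the defining condition for extremality is met, so $A_f$ is extremal. Condition (2) of Theorem~\ref{thm:The-Main-Principal-Theorem} therefore holds, and condition (1), that $A_f$ is semihereditary, follows at once.

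I anticipate no genuine obstacle; the argument amounts to unwinding the definitions and then citing the preceding theorem. The only points meriting a moment's care are the (immediate) logical implication that maximality yields extremality, and the observation that the hypotheses of Theorem~\ref{thm:The-Main-Principal-Theorem} are in force by assumption, so that its equivalences are available to us here.
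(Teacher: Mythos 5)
Your argument is correct and is exactly the route the paper intends: the text states that the corollary ``follows from condition (2)'' of Theorem~\ref{thm:The-Main-Principal-Theorem}, i.e., one observes that a maximal $V$-order trivially satisfies the defining condition of extremality and then invokes the equivalence $(2)\Leftrightarrow(1)$. No gaps; the hypotheses on $J(V)$ and on $K/F$ are used only to put that theorem in force, just as you say.
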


The author suspects that the conclusion of the corollary above remains valid when $J(V)$ is a non-principal
ideal of $V$.
In general, even when $J(V)$ is a principal ideal of $V$, a maximal $V$-order in a central simple $F$-algebra need not be semihereditary if it is not a crossed-product order (see \cite[Theorem 5.7]{M}).

\begin{corollary} \label{cor:Subgroup-corollary}
Suppose $K/F$ is tamely ramified and defectless, $A_f$ is semihereditary, and $G_1$ is a subgroup of $G$.
 \begin{enumerate}
      \item If $L$ is the fixed field of $G_1$, and $U$ is a valuation ring of $L$ lying over $V$, then $A_{f_{L,U}}$ is semihereditary.
    \item The ring $S(G_1)$ is semihereditary.
    \end{enumerate}
 \end{corollary}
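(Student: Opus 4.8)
The plan is to derive both statements from the square-free characterization of semihereditary crossed-product orders established above, after transferring the hypotheses on $K/F$ to the intermediate extension $K/L$. As a preliminary I would record the consequence of the standing hypotheses: since $K/F$ is tamely ramified and defectless and $A_f$ is semihereditary, Theorem~\ref{thm:The-Main-Principal-Theorem}(5) (when $J(V)$ is a principal ideal of $V$) or Theorem~\ref{thm:The-Main-Nonprincipal-Theorem}(5) (when it is not) gives $f(\sigma,\tau)\notin M^2$ for every maximal ideal $M$ of $S$ and all $\sigma,\tau\in G$. I would then prove (1) and deduce (2) from it by central localization.

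For (1), fix $U$ and let $T$ be the integral closure of $U$ in $K$. The key geometric fact is that each maximal ideal $N$ of $T$ yields a valuation ring $T_N$ of $K$ lying over $U$, hence over $V$; since every valuation ring of $K$ over $V$ is one of the $S_M$, we have $T_N=S_M$ for a unique maximal ideal $M$ of $S$. I would exploit this identification twice. First, the inertia group $G^T(T_N\mid L)=\{\sigma\in G_1\mid \sigma(x)-x\in J(T_N)\ \forall x\in T_N\}$ equals $G_1\cap G^T(S_M\mid F)$, a subgroup of $G^T(S_M\mid F)$; as the residue fields $\overline{U}$ and $\overline{V}$ share the characteristic exponent $\overline{p}$, the criterion \cite[Lemma 1]{K3} forces the order of $G^T(T_N\mid L)$ to be prime to $\overline{p}$, so $K/L$ is tamely ramified and defectless with respect to $U$ (this is exactly the argument already used in the excerpt for the overring $W$). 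Second, from $T_N=S_M$ we get $J(T_N)=J(S_M)$, whence $N^2T_N=(J(T_N))^2=(J(S_M))^2=M^2S_M$; since $S$ and $T$ are B\'{e}zout domains, every ideal is the intersection of its localizations, so membership of $f(\sigma,\tau)$ in $N^2$ (resp. $M^2$) is detected at the localization $T_N$ (resp. $S_M$). Therefore $f(\sigma,\tau)\notin M^2$ if and only if $f(\sigma,\tau)\notin N^2$, and the square-free condition recorded above yields $f(\sigma,\tau)\notin N^2$ for every maximal ideal $N$ of $T$ and all $\sigma,\tau\in G_1$. Applying Theorem~\ref{thm:The-Main-Principal-Theorem} or Theorem~\ref{thm:The-Main-Nonprincipal-Theorem} to the pair $(K/L,U)$ — whose square-free criterion reads identically in both cases, since $N^2=N$ when $J(U)$ is non-principal — shows that $A_{f_{L,U}}$ is semihereditary.

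For (2), let $O_L=S\cap L$ be the integral closure of $V$ in $L$; it is the center of $S(G_1)$ and a semilocal B\'{e}zout domain whose localizations at its maximal ideals $m$ are precisely the valuation rings $U=(O_L)_m$ of $L$ lying over $V$. Localizing $S$ at $O_L\setminus m$ produces the integral closure $T$ of $U$ in $K$, so central localization gives $(S(G_1))_m=\sum_{\sigma\in G_1}Tx_\sigma=A_{f_{L,U}}$, which is semihereditary by part (1). Since semihereditariness is detected on central localizations \cite[Lemma 2.2]{M2}, $S(G_1)$ is semihereditary.

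The step I expect to be the main obstacle is the condition transfer in (1): a priori, enlarging $S$ to the bigger ring $T$ could create a value that is square-free in $S$ but becomes a square in $T$. This is ruled out precisely because $T_N=S_M$ as valuation rings of $K$, so their value groups coincide and $(J(T_N))^2=(J(S_M))^2$; no coarsening of valuations occurs in passing from $S$ to $T$, and this is what must be argued carefully. By contrast the non-principal case is soft — there $A_f$ semihereditary forces $H=G$, so $G_1\subseteq H$ and Lemma~\ref{lem:Subgroup-lemma} applies directly — so the genuine content lies in the principal case, where squares really matter.
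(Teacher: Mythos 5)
Your proposal is correct and follows essentially the same route as the paper: the non-principal case via $H=G$ and Lemma~\ref{lem:Subgroup-lemma}, part (1) in the principal case via the square-free criterion of Theorem~\ref{thm:The-Main-Principal-Theorem}, and part (2) from part (1) by central localization using \cite[Lemma 2.2]{M2}. The paper's proof is just a one-line citation of these results; you have supplied the omitted verifications (the identification $T_N=S_M$, the transfer of tameness and of the square-free condition to $K/L$), and these are carried out correctly.
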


 \begin{proof} If $J(V)$ is not a principal ideal of $V$, then the statements follow from Theorem~\ref{thm:The-Main-Nonprincipal-Theorem} and Lemma~\ref{lem:Subgroup-lemma};
 otherwise Part (1) follows from Theorem~\ref{thm:The-Main-Principal-Theorem}, and Part (2) follows from Part (1) and \cite[Lemma 2.2]{M2}.
 \end{proof}

Although the converse of Proposition~\ref{prop:A-global-local-result} does not hold even when the extension $K/F$ is tamely ramified and defectless, there is still a ``local-global'' character to the property of being semihereditary: $A_f$ is semihereditary if and only if, for each $\sigma\in G$, $S(\langle\sigma\rangle)$ is semihereditary. In fact, to determine whether or not $A_f$ is semihereditary, it is more efficient to focus only on the maximal vertices of the graph of $f$:

\begin{corollary} \label{cor:A-local-global-result}
Let $\sigma_1,\sigma_2,\ldots,\sigma_m\in G$ be such that the $\sigma_iH$ are all the maximal vertices of the graph $\textrm{Gr}(f)$. If $K/F$ is tamely ramified and defectless then the following statements are equivalent:
\begin{enumerate}
    \item The crossed-product order $A_f$ is semihereditary.
    \item For each $\sigma\in G$, the $\langle\sigma\rangle$-graded ring $S(\langle\sigma\rangle)$ is semihereditary.
    \item For each $i$, the $\langle\sigma_i\rangle$-graded ring  $S(\langle\sigma_i\rangle)$ is semihereditary.
    \item For each maximal ideal $M$ of $S$, $f(\sigma_i,\sigma_i^{-1})\not\in M^2$ for all $i$.
    \end{enumerate}
\end{corollary}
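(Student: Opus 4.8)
The plan is to prove the cycle of implications $(1)\Rightarrow(2)\Rightarrow(3)\Rightarrow(4)\Rightarrow(1)$. The implication $(1)\Rightarrow(2)$ is immediate from Corollary~\ref{cor:Subgroup-corollary}(2) applied to the cyclic subgroup $G_1=\langle\sigma\rangle$, and $(2)\Rightarrow(3)$ is trivial since each $\sigma_i$ is a particular element of $G$. The substance lies in $(4)\Rightarrow(1)$ and, above all, in $(3)\Rightarrow(4)$.

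For $(4)\Rightarrow(1)$ I would first upgrade the hypothesis, which only concerns the maximal vertices, to the statement that $f(\sigma,\sigma^{-1})\notin M^2$ for \emph{every} $\sigma\in G$ and every maximal ideal $M$ of $S$. Fix such a $\sigma$ and $M$. Since $\textrm{Gr}(f)$ is a finite poset, $\sigma H$ lies below some maximal vertex $\sigma_iH$, i.e. $f(\sigma,\sigma^{-1}\sigma_i)\in U(S)$; in particular $f(\sigma,\sigma^{-1}\sigma_i)\notin M$, so $\sigma H\leq_M\sigma_iH$. Lemma~\ref{lem:Wilson-Lemma} then gives $v_M(f(\sigma,\sigma^{-1}))\leq v_M(f(\sigma_i,\sigma_i^{-1}))$. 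Now $f\notin M^2$ is equivalent to $v_M(f)$ lying strictly below the value of $M^2$ (the minimal positive value when $J(V)$ is non-principal and $M^2=M$, and twice that value when $J(V)$ is principal and $M$ is locally principal), a condition inherited downward along $\leq_M$; hence hypothesis (4) forces $f(\sigma,\sigma^{-1})\notin M^2$ as well. Once this holds for all $\sigma$ and $M$, the order $A_f$ is semihereditary by Theorem~\ref{thm:The-Main-Principal-Theorem} when $J(V)$ is principal, and by Theorem~\ref{thm:The-Main-Nonprincipal-Theorem}(5) when $J(V)$ is non-principal (there $M^2=M$, so the condition reads $f(\sigma,\sigma^{-1})\in U(S)$ for all $\sigma$, i.e. $H=G$).

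The implication $(3)\Rightarrow(4)$ is the crux. Fix $i$ and a maximal ideal $M$ of $S$, put $G_1=\langle\sigma_i\rangle$, let $L$ be its fixed field and $U=S_M\cap L$, a valuation ring of $L$ lying over $V$, with integral closure $T$ in $K$. The idea is to pass from the $G_1$-graded ring $S(G_1)$ down to the crossed-product order $A_{f_{L,U}}$. The center of $S(G_1)$ is the integral closure $S\cap L$ of $V$ in $L$, a Pr\"{u}fer ring; semiheredity is preserved under central localization \cite[Lemma 2.2]{M2}, and $A_{f_{L,U}}$ is precisely the central localization of $S(G_1)$ at the maximal ideal of $S\cap L$ corresponding to $U$, so the hypothesis that $S(G_1)$ is semihereditary yields that $A_{f_{L,U}}$ is semihereditary. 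One checks that $K/L$ is again tamely ramified and defectless: its inertia group is $G^T\cap G_1\subseteq G^T$, so $\gcd(\overline{p},\mid\!\! G^T(K/L)\!\!\mid)=1$, whence the claim by \cite[Lemma 1]{K3}. Hence Theorem~\ref{thm:The-Main-Principal-Theorem}(4), or Theorem~\ref{thm:The-Main-Nonprincipal-Theorem}(5) according as $J(U)$ is principal or not, applies to $A_{f_{L,U}}$ and gives $f_{L,U}(\sigma_i,\sigma_i^{-1})\notin N^2$ for every maximal ideal $N$ of $T$. Choosing the $N$ with $T_N=S_M$ and using $f_{L,U}(\sigma_i,\sigma_i^{-1})=f(\sigma_i,\sigma_i^{-1})$, both non-membership statements localize to $f(\sigma_i,\sigma_i^{-1})\notin (J(S_M))^2$, i.e. $f(\sigma_i,\sigma_i^{-1})\notin M^2$, which is (4). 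The final chain assertion is then Proposition~\ref{prop:chain-graph}.

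The main obstacle I anticipate is the bookkeeping in $(3)\Rightarrow(4)$: verifying that $A_{f_{L,U}}$ really is the central localization of $S(G_1)$ (so that the base ring genuinely becomes the valuation ring $U$ rather than the Pr\"{u}fer ring $S\cap L$), and that the maximal ideal $N$ of $T$ with $T_N=S_M$ translates the condition on $M^2$ correctly across the base change to $L$. Everything else reduces to the monotonicity of Lemma~\ref{lem:Wilson-Lemma} together with the already-established characterizations in Theorems~\ref{thm:The-Main-Principal-Theorem} and~\ref{thm:The-Main-Nonprincipal-Theorem}.
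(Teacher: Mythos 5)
Your proposal is correct and follows essentially the same route as the paper: the same cycle of implications, with $(4)\Rightarrow(1)$ resting on the cocycle identity $f^{\sigma}(\sigma^{-1}\sigma_j,\sigma_j^{-1})f(\sigma,\sigma^{-1})=f(\sigma,\sigma^{-1}\sigma_j)f(\sigma_j,\sigma_j^{-1})$ (which is all that Lemma~\ref{lem:Wilson-Lemma} encodes) and then Theorems~\ref{thm:The-Main-Principal-Theorem} and~\ref{thm:The-Main-Nonprincipal-Theorem}. The one point of divergence is $(3)\Rightarrow(4)$: where you identify $A_{f_{L_i,U_i}}$ as a central localization of $S(\langle\sigma_i\rangle)$ and invoke \cite[Lemma 2.2]{M2}, the paper simply observes $S(\langle\sigma_i\rangle)\subseteq A_{f_{L_i,U_i}}\subseteq\Sigma_{f_{L_i,U_i}}$ and applies \cite[Lemma 4.10]{M} (an order containing a semihereditary order is semihereditary), which disposes of exactly the bookkeeping you flag as the main obstacle; both justifications are valid.
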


\begin{proof}
If $A_f$ is semihereditary, then $S(\langle\sigma\rangle)$ is semihereditary for each $\sigma\in G$ by Corollary~\ref{cor:Subgroup-corollary}.

If $S(\langle\sigma_i\rangle)$ is semihereditary and $M$ is a maximal ideal of $S$, let $L_i$ be the fixed field of $\sigma_i$, and let $U_i=S_M\cap L_i$. By \cite[Lemma 4.10]{M}, $A_{f_{L_i,U_i}}$ is semihereditary, hence $f(\sigma_i,\sigma_i^{-1})\not\in M^2$.

Now suppose that $\sigma_iH$ are all the maximal vertices of the graph of $f$ and $f(\sigma_i,\sigma_i^{-1})\not\in M^2$ for every maximal ideal $M$ of $S$ and for each $i$. Fix a $\sigma\in G$, and a maximal ideal $M$ of $S$. Choose $\sigma_j$, $1\leq j\leq m$, such that $\sigma H\leq\sigma_jH$. From the cocycle identity
$f^{\sigma}(\sigma^{-1}\sigma_j,\sigma_j^{-1})f(\sigma,\sigma^{-1})=f(\sigma,\sigma^{-1}\sigma_j)f(\sigma_j,\sigma_j^{-1})$, it follows that $f(\sigma,\sigma^{-1})\not\in M^2$, hence $A_f$ is semihereditary.
\end{proof}

The corollary above holds even when $J(V)$ is not a principal ideal of $V$, by Lemma~\ref{lem:Subgroup-lemma} and Theorem~\ref{thm:The-Main-Nonprincipal-Theorem}.
We are now going to give yet another characterization of semihereditary crossed-product orders in Theorem~\ref{thm:A-Secondary-Characterization-Theorem} below, but first we need the following results.

Wilson showed in \cite[Theorem 3.4]{CW2} that, if $V$ is a DVR indecomposed in $K$, then $G^T\subseteq H$ whenever $A_f$ is hereditary. In general, if $A_f$ is semihereditary, then so is $A_{f_M}$ for each maximal ideal $M$ of $S$. Hence, if $V$ is a DVR or $J(V)$ is not a principal ideal of $V$, then $G^T(S_M\!\!\mid\!\! F)\subseteq H_M$ whenever $A_f$ is semihereditary. But the following simple example shows that this is not true in general.

\begin{example} \label{exp:A-Counterexample-to-Wilson}
We give an example of a semihereditary crossed-product order $A_f$ with $V$ indecomposed in $K$ but $G^T\not\subseteq H$.
\end{example}

Let $F=\mathbb{Q}(x,y)$, $K=F(\sqrt{y})$, and $V$ the standard rank-2 valuation ring of $F$ with $J(V)=xV$. Then $G=\{1,\sigma\}=G^T$, where $\sigma$ is defined by $\sigma(\sqrt{y})=-\sqrt{y}$. Define a two-cocycle $f:G\times G\mapsto S^{\#}$ by $f(1,1)=f(1,\sigma)=f(\sigma,1)=1$ and $f(\sigma,\sigma)=x$. Then $A_f$ is semihereditary by Theorem~\ref{thm:The-Main-Principal-Theorem}. By Proposition~\ref{prop:Cyclic-Division-Algebra},
$A_f$ is actually an invariant valuation ring of $\Sigma_f$ since $\mid\!\! H\!\!\mid=1$ and $V$ is indecomposed in $K$.  But $G^T\not\subseteq H$. \qed

If $J(V)=\pi V$ is a principal ideal of $V$, let $V'$ be an extension of $V$ to $K$ with valuation $v$, and let $E=\{\delta\in\Gamma_{V'}\mid 0\leq\delta<v(\pi)\}$. Then $\epsilon=\epsilon(V'\!\!\mid\!\! F)=\mid\!\! E\!\!\mid$ is called the \textit{inertial index} of $V'$ over $F$. If $J(V')=\pi_vV'$, then $v(\pi_v)$ is the smallest positive element of $\Gamma_{V'}$. A routine argument shows that $E=\{0,v(\pi_v),v(\pi_v^2),\ldots,v(\pi_v^{\epsilon-1})\}$ and $v(\pi_v^{\epsilon})=v(\pi)$. Therefore, if $W$ is the minimal overring of $V$ in $F$ and $W'=WV'$,
then $\epsilon(V'\!\!\mid\!\! F)=e(V'/J(W')\!\!\mid\!\!\overline{W})$ $(=[\Gamma_{V'/J(W')}:\Gamma_{V/J(W)}])$.

 Suppose $\epsilon(V'\!\!\mid\!\! F)=e(V'\!\!\mid\!\! F)$. For later use, we record the following two consequences of this assumption. Firstly, $\Gamma_{V'}/\Gamma_V$ must be a cyclic group of order $e(V'\!\!\mid\!\! F)$ with $v(\pi_v)+\Gamma_V$ as a generator. Secondly, if $a\in V$ satisfies $v(a)<e(V'\!\!\mid\!\! F)v(\pi_v)$, then $v(a)<\epsilon(V'\!\!\mid\!\! F)v(\pi_v)=v(\pi^{\epsilon}_v)=v(\pi)$. Hence $v(a)=0$, that is, $a\in U(V)$. (We shall make use of the first observation in Proposition~\ref{prop:A-Generalized-Janusz-Result} of Section~\ref{sec:Dubrovin-Case}, and of the second observation in Remark~\ref{rem:alternate-to-Wilson} below.)

Suppose $V_1$ is a valuation ring of $K$ lying over $V$. Then the extension $K/F$ is called \textit{totally ramified} if $[K:F]=[\Gamma_{V_1}:\Gamma_V]$. The extension $K/F$ is called \textit{tame totally ramified} if it is both totally ramified and tamely ramified. Of course when this happens, $V$ is defectless in $K$.

\begin{lemma} \label{lem:A-generalized-Wilson-Theorem}
Suppose that $K/F$ is tame totally ramified and $S$ is finitely generated over $V$. If $A_f$ is semihereditary, then $H=G$.
\end{lemma}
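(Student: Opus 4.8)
The plan is to reduce to the case where $G$ is cyclic and then exhibit a scalar in the ground ring whose value is strictly smaller than that of a uniformizer, which is impossible. If $K=F$ there is nothing to prove, so assume $e:=[K:F]=|G|>1$. Total ramification makes $V$ indecomposed in $K$, with $\overline{S}=\overline{V}$ and $K/F$ defectless. First I would observe that $J(V)$ must be principal: were it not, then since $S$ is finitely generated over $V$ and $K/F$ is tamely ramified and defectless, the reasoning used in the proof of Proposition~\ref{prop:fg-case} (through \cite[18.3 \& 18.6]{E}) would force $K/F$ to be unramified, contradicting total ramification with $e>1$. Writing $J(V)=\pi V$ and $J(S)=\pi_S S$, the value $v(\pi_S)$ is then the least positive element of $\Gamma_S$.

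The decisive use of the finite-generation hypothesis is the equality $\epsilon(S\mid F)=e(S\mid F)$. Indeed $S$ is then a free $V$-module of rank $e=[K:F]$, so $\dim_{\overline V}(S/\pi S)=e$; on the other hand this dimension equals $\epsilon(S\mid F)[\overline S:\overline V]=\epsilon(S\mid F)$ because $\overline S=\overline V$. Hence $\epsilon=e$, and by the two observations recorded just before the lemma, $\Gamma_S/\Gamma_V$ is cyclic of order $e$ generated by $v(\pi_S)$ and $v(\pi)=e\,v(\pi_S)$. I expect this to be the heart of the matter: Example~\ref{exp:A-Counterexample-to-Wilson}, in which $S$ is \emph{not} finitely generated, is tame and totally ramified yet satisfies $\epsilon<e$ and $H\neq G$, so finite generation is precisely what excludes such behaviour. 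The same argument applies verbatim over any intermediate base $U_0=S\cap L_0$ that arises below.

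Suppose now, for contradiction, that $H\neq G$. By Theorem~\ref{thm:Wilson-Theorem}, $G/H$ is cyclic, generated by some $\sigma H$ of order $n=|G/H|\geq 2$, with graph the chain $H\leq\sigma H\leq\cdots\leq\sigma^{n-1}H$. Passing to $G_0=\langle\sigma\rangle$, its fixed field $L_0$, and $U_0=S\cap L_0$, Corollary~\ref{cor:Subgroup-corollary} shows $S(G_0)=A_{f_{L_0,U_0}}$ is again semihereditary; moreover $K/L_0$ is tame and totally ramified, $S$ is finitely generated over $U_0$, and the corresponding subgroup is $G_0\cap H=\langle\sigma^n\rangle\neq G_0$, while the unit pattern is governed by the very same values $f(\sigma^i,\sigma)$. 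Renaming $(F,V,G)$ as $(L_0,U_0,G_0)$, and invoking the previous paragraph over this base, I may therefore assume $G=\langle\sigma\rangle$ is cyclic of order $m=e$, with $f(\sigma^i,\sigma)\in U(S)$ exactly when $\sigma^iH\leq\sigma^{i+1}H$, that is, exactly when $i$ is not congruent to $n-1$ modulo $n$.

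The final step is a valuation count. Since $x_\sigma s=\sigma(s)x_\sigma$ and $\sigma^m=1$, the element $c:=x_\sigma^{m}=\prod_{i=1}^{m-1}f(\sigma^i,\sigma)$ commutes with $K$ and is $\sigma$-fixed, hence $c\in S\cap F=V$. Exactly $m/n$ of the factors $f(\sigma^i,\sigma)$, namely those with $i$ congruent to $n-1$ modulo $n$, are non-units; for each such factor $f(\sigma^i,\sigma)\notin M^2$ by Theorem~\ref{thm:The-Main-Principal-Theorem}, so $v(\pi_S)\leq v(f(\sigma^i,\sigma))<2\,v(\pi_S)$. Summing the values yields $(m/n)v(\pi_S)\leq v(c)<(2m/n)v(\pi_S)\leq m\,v(\pi_S)=v(\pi)$, where the last inequality uses $n\geq 2$. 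But $v(c)>0$ forces $c\in J(V)=\pi V$, whence $v(c)\geq v(\pi)$, a contradiction; therefore $H=G$. I expect the bookkeeping in this last paragraph, together with the identification $\epsilon=e$, to be the only genuinely delicate points, the reduction and the remaining structural input being direct appeals to Theorems~\ref{thm:Wilson-Theorem} and \ref{thm:The-Main-Principal-Theorem} and Corollary~\ref{cor:Subgroup-corollary}.
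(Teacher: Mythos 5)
Your argument is correct, but it is worth locating it relative to the paper. The proof printed under Lemma~\ref{lem:A-generalized-Wilson-Theorem} deliberately stays close to Wilson's DVR techniques: it splits into the non-principal case (Theorem~\ref{thm:The-Main-Nonprincipal-Theorem}), a rank-one case handled by passing to the completion and exploiting a primitive root of unity acting on a uniformizer, and a higher-rank case that descends through the minimal overring $W$, shows $B_f=WA_f$ is Azumaya, and reduces to a hereditary order over the DVR $V/J(W)$. Your route is different from that displayed proof, but it is essentially the "alternate and quick way" the paper itself records in Remark~\ref{rem:alternate-to-Wilson}: the norm-like product $c=x_\sigma^{m}=\prod_{i=1}^{m-1}f(\sigma^i,\sigma)$ lies in the base valuation ring, the square-free condition from Theorem~\ref{thm:The-Main-Principal-Theorem} caps each factor's value at $v(\pi_S)$, and the identity $\epsilon(S\!\mid\!F)=e(S\!\mid\!F)$ (your free-module dimension count is a sound substitute for the paper's appeal to \cite[Theorem 18.6]{E}) forces $v(c)<v(\pi)$, which is incompatible with $c$ being a nonunit of the base. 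Your detour through Theorem~\ref{thm:Wilson-Theorem} and Corollary~\ref{cor:Subgroup-corollary} to obtain the cyclic quotient, the chain, and the exact count of $m/n$ nonunit factors is valid but unnecessary: as in Remark~\ref{rem:alternate-to-Wilson}, one can take an arbitrary $\sigma$ of order $n>1$, bound $v\bigl(\prod_{i=1}^{n-1}f(\sigma^i,\sigma)\bigr)\leq (n-1)v(\pi_S)<nv(\pi_S)=v(\pi)$ without knowing which factors are units, conclude the product is a unit, and hence that $f(\sigma^{-1},\sigma)\in U(S)$, i.e.\ $\sigma\in H$. What your version (and the Remark) buys is brevity; what the paper's longer proof buys, as it says explicitly, is continuity with the classical DVR theory. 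Your identification of Example~\ref{exp:A-Counterexample-to-Wilson} as the obstruction removed by finite generation ($\epsilon<e$ there) is exactly the right diagnostic.
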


\begin{proof} The lemma is true when $J(V)$ is not a principal ideal of $V$, by Theorem~\ref{thm:The-Main-Nonprincipal-Theorem}. From now on, assume that
$J(V)$ is a principal ideal of $V$. If the Krull dimension of $V$ is one, then $V$ is a DVR and the lemma is a result of Wilson \cite[Theorem 3.4]{CW2}.
For the sake of completeness, we now present a slight variant of his argument: suppose the conclusion does not hold. By Theorem~\ref{thm:Wilson-Theorem}, there exists a $\sigma\in G$ such that $f(\sigma,\sigma^{-1})\not\in U(S)$, $G/H=\langle\sigma H\rangle$, and the graph of $f$ is $$H\leq\sigma H\leq\sigma^2 H\leq\cdots\leq\sigma^{\mid G/H\mid-1}H.$$

If $F'$ is the fixed field of $\sigma$, $G'=\textrm{Gal(}K/F'\textrm{)}=\langle\sigma\rangle$, $f'=f_{\mid_{G'\times G'}}$, and $H'$ is the subgroup of $G'$ associated to $f'$, then $H'=H\cap\langle\sigma\rangle$. Since $G/H=\langle\sigma H\rangle$, we have $\langle\sigma \rangle/(\langle\sigma\rangle\cap H)\cong G/H$. Therefore the graph of $f'$ is $$H'\leq\sigma H'\leq\sigma^2 H'\leq\cdots\leq\sigma^{\mid G'/H'\mid-1}H'.$$ The extension $K/F'$ is also tame totally ramified. Thus it does no harm to assume that $G=\langle\sigma\rangle$, and then show that we cannot have $f(\sigma,\sigma^{-1})\not\in U(S)$. Let $n=\mid \!\!G\!\!\mid$, and $r=\mid\!\! G/H\!\!\mid$. Since $H\not=G$, we have $r>1$.

First assume that $(F,V)$ is complete. By \cite[Propositions 3.1]{W}, there exists $\pi_S\in S$, and an $n^{\textrm{th}}$-root of unity $\omega\in V$, such that $J(S)=\pi_SS$ and $\sigma(\pi_S)=\omega\pi_S$. The argument in
\cite[Proposition 2.1]{W} shows that $\overline{\omega}$ is a \textit{primitive} $n^{\textrm{th}}$-root of unity in $\overline{V}$.

We have $(x_{\sigma})^r=f(\sigma,\sigma)f(\sigma^2,\sigma)\cdots f(\sigma^{r-1},\sigma)x_{\sigma^r}$.
Since $$H\leq\sigma H\leq\sigma^2H\leq\cdots\leq\sigma^{r-1}H,$$ we have  $$f(\sigma,\sigma)=f(\sigma,\sigma^{-1}\sigma^2),f(\sigma^2,\sigma)=f(\sigma^2,\sigma^{-2}\sigma^3),\ldots,$$ $$
f(\sigma^{r-2},\sigma)=f(\sigma^{r-2},\sigma^{2-r}\sigma^{r-1})\in U(S).$$ On the other hand, if $f(\sigma^{r-1},\sigma)\in U(S)$ then we would have $x_{\sigma}\in U(A_f)$, a contradiction. Since $A_f$ is hereditary, we have $f(\sigma^{r-1},\sigma)\not\in J(S)^2$ and so $f(\sigma^{r-1},\sigma)=z\pi_S$ for some $z\in U(S)$. So $(x_{\sigma})^r=t\pi_Sx_{\sigma^r}$ for some $t\in U(S)$.  Therefore $(x_{\sigma})^n=((x_{\sigma})^r)^{n/r}=(t\pi_Sx_{\sigma^r})^{n/r}=\omega^m\pi_S^ku$ for some $u\in U(S)$, $k=n/r$, and $m$ some integer. Since $r>1$, we have $1\leq k<n$. Then $$(x_{\sigma})^{n+1}=(x_{\sigma})^nx_{\sigma}=\omega^m\pi_S^kux_{\sigma}$$ and  $$(x_{\sigma})^{n+1}=x_{\sigma}(x_{\sigma})^n=x_{\sigma}\omega^m\pi_S^ku=\omega^{m+k}\pi_S^k\sigma(u)x_{\sigma}.$$
Therefore $\omega^m\pi_S^ku=\omega^{m+k}\pi_S^k\sigma(u)$ whence $\omega^k=u/\sigma(u)$ and therefore  $\overline{\omega}^k=\overline{1}$ in $\overline{V}$, contradicting the fact that $\overline{\omega}$ is a primitive $n^{\textrm{th}}$-root of unity in $\overline{V}$ and $1\leq k< n$. We should therefore always have $H=G$ when $(F,V)$ is complete.

If $(F,V)$ is not complete, let $(\widehat{F},\widehat{V})$ be the completion of $(F,V)$. Since $V$ is indecomposed in $K$,
$\widehat{K}=K\widehat{F}$ is a field, and $\widehat{S}=S\otimes_V\widehat{V}$ is a valuation ring of $\widehat{K}$ lying over $\widehat{V}$. Further, the extension $\widehat{K}/\widehat{F}$ has the same group $G$ as $K/F$ and satisfies the hypothesis of this lemma. Let $\widehat{f}:G\times G\mapsto \widehat{S}^{\#}$ be given by $\widehat{f}(\sigma,\tau)=f(\sigma,\tau)$ for all $\sigma,\tau\in G$. Then $A_{\widehat{f}}=\sum_{\sigma\in G}\widehat{S}x_{\sigma} (\cong A_f\otimes_V\widehat{V})$ is hereditary, hence $\widehat{f}(\sigma,\sigma^{-1})\in U(\widehat{S})\cap S=U(S)$ for all $\sigma\in G$. Therefore $H=G$ as claimed.

Now suppose that the Krull dimension of $V$ is greater than one. Let $W$ be the minimal overring of $V$ in $F$, let $R=WS$ and $B_f=WA_f$ as before. Since $A_f$ is semihereditary, $f(\sigma,\tau)\not\in J(S)^2$ for all $\sigma,\tau\in G$, hence $f(\sigma,\tau)\in U(R)$ for all $\sigma,\tau\in G$, since $J(R)\subseteq J(S)^2$. Then
$$\epsilon(S\!\!\mid\!\! F)=e(S/J(R)\!\!\mid\!\!\overline{W})\leq [\overline{R}:\overline{W}]\leq [K:F]=e(S\!\!\mid\!\! F).$$ Because $S$ is finitely generated over $V$, we have $\epsilon(S\!\!\mid\!\! F)=e(S\!\!\mid\!\! F)$ by \cite[Theorem 18.6]{E}, hence
$[\overline{R}:\overline{W}]=[K:F]$ and so $e(R\!\!\mid\!\! F)=1$. But $W$ is tamely ramified and defectless in $K$.
Therefore $W$ is unramified and defectless in $K$, $\textrm{Gal(}\overline{R}/\overline{W}\textrm{)}=G$, and $B_f$ is Azumaya over $W$ by Theorem~\ref{thm:The-Azumaya-Theorem}. Further, if $\overline{f}(\sigma,\tau) := f(\sigma,\tau)\, \textrm{mod}\, J(R)$ for all $\sigma,\tau\in G$, then $\overline{f}:G\times G\mapsto (S/J(R))^{\#}$ is a normalized two-cocycle, and $A_f/J(B_f)=A_{\overline{f}}$ is a crossed-product $V/J(W)$-algebra in $\Sigma_{\overline{f}}=\overline{B}_f=(\overline{R}/\overline{W},G,\overline{f})$.
 Observe that $Z(A_{\overline{f}})=V/J(W)$ is a DVR tame totally ramified in $\overline{R}$ as $e(S/J(R)\!\!\mid\!\!\overline{W})= [\overline{R}:\overline{W}]$ and $\textrm{Char(}\overline{V}\textrm{)}\nmid\,\mid\!\! G\!\!\mid$. Further, $A_{\overline{f}}$ is hereditary by Theorem~\ref{thm:Wilson-Theorem}. We conclude that $G=H$ as claimed.
 \end{proof}

 \begin{remark} \label{rem:alternate-to-Wilson}
 Here is an alternate and quick way of proving Lemma~\ref{lem:A-generalized-Wilson-Theorem}: given the assumptions of the lemma, let $\sigma\in G$ be of order $n>1$, let $L$ be the fixed field of $\sigma$, and let $U=S\cap L$. The assumptions given imply that $\epsilon(S\!\!\mid\!\! L)=e(S\!\!\mid\!\! L)=n$ and, if $J(S)=\pi_S S$ and $v$ is a valuation on $K$ corresponding to $S$, then the following holds: if $u\in U$ satisfies $v(u)<nv(\pi_S)$ then $v(u)=0$. Now let $a=\prod_{i=1}^{n-1}f(\sigma^i,\sigma)$. Then $a\in U$, by \cite[Theorem 30.3]{R}. Since $A_f$ is semihereditary, $v(f(\sigma^i,\sigma))\leq v(\pi_S)$, hence $v(a)=\sum_{i=1}^{n-1}v(f(\sigma^i,\sigma))\leq (n-1)v(\pi_S)<nv(\pi_S)$. Hence $a\in U(S)$, and thus $f(\sigma^{-1},\sigma)=f(\sigma^{n-1},\sigma)\in U(S)$. This shows that $H=G$.

However, this alternate approach above does not build on the techniques already in place when $V$ is a DVR, and hence does not quite illuminate the close connection we want to maintain between our theory and the classical theory. We shall, nonetheless, use the trick above later in Proposition~\ref{prop:Harada-result}.
\end{remark}

 \begin{proposition} \label{prop:A-generalized-Wilson-Proposition}
 Assume $K/F$ is tamely ramified and defectless, $S$ is finitely generated over $V$, and $A_f$ is a semihereditary order. Let $M$ be a maximal ideal of $S$. Then:
 \begin{enumerate}
    \item $G^T(S_M\!\!\mid\!\! F)\subseteq H_M$. In particular, if $V$ is indecomposed in $K$ then $G^T\subseteq H$.
    \item If $K/F$ is an abelian extension, then $G^T(S_M\!\!\mid\!\! F)\subseteq H$.
    \end{enumerate}
 \end{proposition}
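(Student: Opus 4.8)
The plan is to prove part (1) one element at a time, reducing each $\sigma\in G^T:=G^T(S_M\mid F)$ to the tame totally ramified situation already settled in Lemma~\ref{lem:A-generalized-Wilson-Theorem}. Fix $\sigma\in G^T$, let $L$ be the fixed field of $\langle\sigma\rangle$, and set $U_L=S_M\cap L$, a valuation ring of $L$ lying over $V$. Since $\langle\sigma\rangle\subseteq G^T\subseteq G^Z$, the element $\sigma$ stabilizes $S_M$, so $S_M$ is the unique extension of $U_L$ to $K$; thus $U_L$ is indecomposed in $K$ and $S_M$ is the integral closure of $U_L$ in $K$. Consequently $A_{f_{L,U_L}}=\sum_{i}S_Mx_{\sigma^i}$.

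The first thing to check is that $K/L$ is tame totally ramified. The inertia group of $S_M$ over $L$ is $\langle\sigma\rangle\cap G^T=\langle\sigma\rangle$, so the residue extension is trivial and $U_L$ is indecomposed in $K$; moreover $|\langle\sigma\rangle|$ divides $|G^T|$, and since $K/F$ is tamely ramified and defectless we have $\gcd(\overline p,|G^T|)=1$ by \cite[Lemma 1]{K3}, whence $\gcd(\overline p,|\langle\sigma\rangle|)=1$. Thus $K/L$ is tamely ramified and defectless, and being also indecomposed with trivial residue extension it is tame totally ramified. As $S$ is finitely generated over $V$, $S_M$ is finitely generated over $U_L$. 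Now Corollary~\ref{cor:Subgroup-corollary}(1), applied to the subgroup $\langle\sigma\rangle$, shows that $A_{f_{L,U_L}}$ is semihereditary, so Lemma~\ref{lem:A-generalized-Wilson-Theorem} applies and forces the $H$-subgroup of $A_{f_{L,U_L}}$ to be all of $\langle\sigma\rangle$. In particular $f(\sigma,\sigma^{-1})\in U(S_M)$, i.e. $f(\sigma,\sigma^{-1})\notin M$, so $\sigma\in H_M$. As $\sigma\in G^T$ was arbitrary, $G^T\subseteq H_M$. When $V$ is indecomposed in $K$ one has $G^Z=G$, $f_M=f$ and $M=J(S)$, so $H_M=H$ and the ``in particular'' assertion follows.

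For part (2) I would propagate part (1) across all the maximal ideals of $S$. These are exactly $\{\gamma(M)\mid\gamma\in G\}$ by transitivity of the $G$-action, and a direct computation gives $G^T(S_{\gamma(M)}\mid F)=\gamma\,G^T(S_M\mid F)\,\gamma^{-1}$. When $G$ is abelian these conjugates all coincide with $G^T:=G^T(S_M\mid F)$. Fixing $\sigma\in G^T$, part (1) applied to each maximal ideal $M'$ of $S$ yields $\sigma\in G^T=G^T(S_{M'}\mid F)\subseteq H_{M'}$, i.e. $f(\sigma,\sigma^{-1})\notin M'$; since this holds for every $M'$, we get $f(\sigma,\sigma^{-1})\in U(S)$, that is $\sigma\in H$. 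Hence $G^T\subseteq H$.

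The only genuine obstacle is conceptual rather than computational: recognizing that the cyclic subextension cut out by an element of the inertia group is itself tame totally ramified, so that the hard analytic work---the root-of-unity and minimal-overring arguments---can be outsourced wholesale to Lemma~\ref{lem:A-generalized-Wilson-Theorem}, while Corollary~\ref{cor:Subgroup-corollary}(1) supplies the semiheredity of the restricted order $A_{f_{L,U_L}}$ needed to invoke it.
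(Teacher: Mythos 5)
Your proof is correct and follows essentially the same route as the paper: the paper applies Corollary~\ref{cor:Subgroup-corollary} together with Lemma~\ref{lem:A-generalized-Wilson-Theorem} to the whole inertia group at once via the order $A_{f_{(M)}}$ over the inertia field, whereas you apply the same two results one cyclic subgroup $\langle\sigma\rangle$ at a time, verifying (correctly, and in more detail than the paper) that the relevant subextension is tame totally ramified with finitely generated integral closure. Part (2) is argued exactly as in the paper, by letting the maximal ideal vary and using that the inertia groups coincide in the abelian case.
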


 \begin{proof}
 If $J(V)$ is a not a principal ideal of $V$, then $H=G$ by Theorem~\ref{thm:The-Main-Nonprincipal-Theorem} and we are done. So let us assume that $J(V)$ is a principal ideal of $V$.

 (1) We know that $A_{f(M)}$ is semihereditary by Corollary~\ref{cor:Subgroup-corollary}. By Lemma~\ref{lem:A-generalized-Wilson-Theorem}, $f(\sigma,\sigma^{-1})\not\in M$ for every $\sigma\in G^T$, hence $G^T\subseteq H_M$.

 (2) If $G$ is abelian then, as $M$ varies over the set of maximal ideals of $S$, the corresponding inertial groups coincide. By part (1), if $\sigma\in G^T$ then $f(\sigma,\sigma^{-1})\not\in N$ for each maximal ideal $N$ of $S$, and hence $f(\sigma,\sigma^{-1})\in U(S)$.
  \end{proof}

\begin{theorem} \label{thm:A-Secondary-Characterization-Theorem}
Suppose $K/F$ is tamely ramified and defectless and $S$ is finitely generated over $V$. Then the following statements are equivalent:
\begin{enumerate}
    \item The crossed-product order $A_f$ is semihereditary.
     \item For each maximal ideal $M$ of $S$, we have $G^T(S_M\!\!\mid\!\! F)\subseteq H_M$ and there exists a set of right coset representatives $\sigma_1,\sigma_2,\ldots,\sigma_m$ of $G^T(S_M\!\!\mid\!\! F)$ in $G$ such that for all i, $f(\sigma_i,\sigma_i^{-1})\not\in M^2$.
     \item For each maximal ideal $M$ of $S$, $A_{f_{(M)}}$ is semihereditary and there exists a set of right coset representatives $\sigma_1,\sigma_2,\ldots,\sigma_m$ of $G^T(S_M\!\!\mid\!\! F)$ in $G$ such that for all i, $f(\sigma_i,\sigma_i^{-1})\not\in M^2$.
     \end{enumerate}
\end{theorem}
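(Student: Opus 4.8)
The plan is to establish the cycle $(1)\Rightarrow(2)\Rightarrow(3)\Rightarrow(1)$, reducing everything to the common criterion coming from Theorems~\ref{thm:The-Main-Principal-Theorem} and \ref{thm:The-Main-Nonprincipal-Theorem}: since $K/F$ is tamely ramified and defectless, $A_f$ is semihereditary if and only if $f(\sigma,\sigma^{-1})\notin M^2$ for every $\sigma\in G$ and every maximal ideal $M$ of $S$ (in the non-principal case this just reads $f(\sigma,\sigma^{-1})\in U(S)$, i.e. $H=G$). The device linking the global order to the inertia group is $A_{f_{(M)}}=\sum_{\sigma\in G^T}Tx_\sigma$, the order attached to $K/K^T$ and the valuation ring $U=K^T\cap S_M$. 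Before anything else I would record the valuation-theoretic setup: since $G^T\subseteq G^Z$, the decomposition group of $S_M$ over $K^T$ is all of $G^T$, so $S_M$ is the unique extension of $U$ to $K$ and $T=S_M$; moreover $K/K^T$ is tame totally ramified, $J(U)$ is principal exactly when $J(V)$ is (the two rings share the value group $\Gamma_V$), and finite generation transfers from $S/V$ to $T/U$. With these in hand, Theorems~\ref{thm:The-Main-Principal-Theorem} and \ref{thm:The-Main-Nonprincipal-Theorem} and Lemma~\ref{lem:A-generalized-Wilson-Theorem} all apply to $A_{f_{(M)}}$, exactly as they were invoked in the proof of Proposition~\ref{prop:A-generalized-Wilson-Proposition}.

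For $(1)\Rightarrow(2)$ I would simply quote Proposition~\ref{prop:A-generalized-Wilson-Proposition}(1) to get $G^T(S_M\mid F)\subseteq H_M$ (this is where $S$ finitely generated over $V$ is needed), while the criterion above gives $f(\sigma,\sigma^{-1})\notin M^2$ for \emph{every} $\sigma$; hence any complete set of right coset representatives of $G^T$ in $G$ is automatically ``nice.'' For $(2)\Rightarrow(3)$ the hypothesis $G^T\subseteq H_M$ says precisely that $f(\tau,\tau^{-1})\in U(S_M)=U(T)$ for all $\tau\in G^T$, so $f(\tau,\tau^{-1})$ lies in no power of the maximal ideal of $T$; applying the criterion to the tame totally ramified extension $K/K^T$ shows $A_{f_{(M)}}$ is semihereditary, and the coset-representative clause is carried over verbatim.

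The substance is $(3)\Rightarrow(1)$. Here I would first apply Lemma~\ref{lem:A-generalized-Wilson-Theorem} to the semihereditary order $A_{f_{(M)}}$ over the tame totally ramified extension $K/K^T$ to force $H(A_{f_{(M)}})=G^T$, i.e. $f(\tau,\tau^{-1})\in U(S_M)$ for all $\tau\in G^T$; the remark preceding Lemma~\ref{lem:Subgroup-lemma}, read inside $S_M$ (legitimate since $\tau\in G^T\subseteq G^Z$ fixes $S_M$), then gives $f(\tau,\rho)\in U(S_M)$ for all $\tau\in G^T$, $\rho\in G$. Now fix $\sigma\in G$ and $M$, and write $\sigma=\tau\sigma_i$ with $\tau\in G^T$ and $\sigma_i$ one of the chosen representatives. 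The key is to combine the two cocycle identities $f(\tau,\sigma_i)f(\sigma,\sigma^{-1})=f^\tau(\sigma_i,\sigma^{-1})f(\tau,\tau^{-1})$ and $f(\sigma_i,\sigma_i^{-1})=f^{\sigma_i}(\sigma_i^{-1},\tau^{-1})f(\sigma_i,\sigma^{-1})$. Since $f(\tau,\sigma_i),f(\tau,\tau^{-1})\in U(S_M)$ and $\tau\in G^Z$ preserves $v_M$, the first yields $v_M(f(\sigma,\sigma^{-1}))=v_M(f(\sigma_i,\sigma^{-1}))$; the second, using only $v_M(f^{\sigma_i}(\sigma_i^{-1},\tau^{-1}))\geq 0$, yields $v_M(f(\sigma_i,\sigma^{-1}))\leq v_M(f(\sigma_i,\sigma_i^{-1}))$. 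Hence $v_M(f(\sigma,\sigma^{-1}))\leq v_M(f(\sigma_i,\sigma_i^{-1}))$, and because $f(\sigma_i,\sigma_i^{-1})\notin M^2$ by niceness of the representatives, $f(\sigma,\sigma^{-1})\notin M^2$. As $\sigma$ and $M$ were arbitrary, the criterion gives that $A_f$ is semihereditary, and the final assertion about $\textrm{Gr}(f^M)$ being a chain then follows from Proposition~\ref{prop:chain-graph}.

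The hard part will be exactly this last valuation bookkeeping, and the obstacle is structural rather than computational: the representative $\sigma_i$ need \emph{not} lie in $G^Z$, so $v_M$ is not invariant under $\sigma_i$ and one cannot pretend that the $\sigma_i$-twisted cofactor $f^{\sigma_i}(\sigma_i^{-1},\tau^{-1})$ is a unit. The trick I intend to exploit is to route all the unit information through the factor $\tau\in G^T\subseteq G^Z$, which does preserve $v_M$ and does produce units via the remark, while using nothing beyond integrality ($v_M\geq 0$) for the uncontrolled $\sigma_i$-twisted term. This is precisely what upgrades the hypothesis ``$f(\sigma_i,\sigma_i^{-1})\notin M^2$ for the representatives only'' to ``$f(\sigma,\sigma^{-1})\notin M^2$ for all $\sigma$,'' and it is the step where the interplay between the ``nice'' representatives of $G^T$ and the total ramification of $K/K^T$ is genuinely used.
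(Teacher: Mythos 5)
Your proposal is correct and follows essentially the same route as the paper: both reduce everything to the criterion ``$f(\sigma,\sigma^{-1})\not\in M^2$ for all $\sigma\in G$'' from Theorems~\ref{thm:The-Main-Nonprincipal-Theorem} and \ref{thm:The-Main-Principal-Theorem}, obtain $G^T(S_M\!\mid\! F)\subseteq H_M$ via Proposition~\ref{prop:A-generalized-Wilson-Proposition} (resp.\ Lemma~\ref{lem:A-generalized-Wilson-Theorem} applied to $A_{f_{(M)}}$), and then bootstrap from the coset representatives to all of $G$ by a cocycle identity whose $G^T$-factors are units of $S_M$. The only cosmetic differences are that the paper proves the two biconditionals $(1)\Leftrightarrow(2)$ and $(2)\Leftrightarrow(3)$ and compresses your two cocycle identities into the single identity $f^{h^{-1}}(h\sigma_i,\sigma_i^{-1}h^{-1})f^{h^{-1}}(h,\sigma_i)f^{\sigma_i}(\sigma_i^{-1},h^{-1})=f(h^{-1},h)f(\sigma_i,\sigma_i^{-1})$, reading off ideal membership directly rather than via $v_M$.
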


\begin{proof} $(1) \Leftrightarrow (2)$: Suppose $A_f$ is semihereditary. Let $M$ be a maximal ideal of $S$, and let $\sigma_1,\sigma_2,\ldots,\sigma_m$ be any set of right coset representatives of $G^T$ in $G$.
Then $G^T\subseteq H_M$ by Proposition~\ref{prop:A-generalized-Wilson-Proposition}, and $f(\sigma_i,\sigma_i^{-1})\not\in M^2$ by Theorem~\ref{thm:The-Main-Nonprincipal-Theorem} (respectively Theorem~\ref{thm:The-Main-Principal-Theorem}) if $J(V)^2=J(V)$ (respectively if $J(V)^2\not= J(V)$).

Conversely, let $M$ be a maximal ideal of $S$.
Suppose $G^T\subseteq H_M$ and $\sigma_1,\sigma_2,\ldots,\sigma_m$ is a set of right coset representatives of $G^T$ in $G$ such that $f(\sigma,\sigma^{-1})\not\in M^2$ for each $i$. Let $\sigma\in G$. We will show that $f(\sigma,\sigma^{-1})\not\in M^2$.

There exists an $i$, $0\leq i\leq m$, and an $h\in G^T\subseteq H_M$, such that $\sigma=h\sigma_i$. From the cocycle identity
$$f^{h^{-1}}(h\sigma_i,\sigma_i^{-1}h^{-1})f^{h^{-1}}(h,\sigma_i)f^{\sigma_i}(\sigma_i^{-1},h^{-1})=
f(h^{-1},h)f(\sigma_i,\sigma_i^{-1}),$$

we conclude that $f^{h^{-1}}(\sigma,\sigma^{-1})\not\in M^2$ and, since $M^h=M$, we have $f(\sigma,\sigma^{-1})\not\in M^2$. It follows from Theorem~\ref{thm:The-Main-Nonprincipal-Theorem} and Theorem~\ref{thm:The-Main-Principal-Theorem} that $A_f$ is semihereditary.

 $(2) \Leftrightarrow (3)$: This now follows from Proposition~\ref{prop:A-generalized-Wilson-Proposition} and \cite[Theorem 2]{K3}.
\end{proof}

We end this section by highlighting further differences and similarities between our crossed-product orders, and the ones already in the literature.

The cocycle $f$ is said to be \textit{nullcohomologous over $S$} (respectively \textit{nullcohomologous over K}) if $f\! \sim_S\! 1$ (respectively $f\! \sim_K\! 1$), where $1$ denotes the trivial cocycle that takes on the constant value $1$. We hasten to point out that we can have $f\! \sim_K\! 1$ \textit{without} having $f\! \sim_S\! 1$, which is precisely why the following proposition supplants \cite[Theorem 1]{K3}.

It is well known that, if $V$ is a DVR, then $K/F$ is tamely ramified if and only if there is an $s\in S$ such that $\sum_{\sigma\in G}\sigma(s)=1$ \cite[Theorem 3.2]{AR}. This criterion is fully generalizable by \cite[Corollary 1]{K3}: if $V$ is an arbitrary valuation ring of $F$, then the Galois extension $K/F$ is tamely ramified and defectless if and only if $\sum_{\sigma\in G}\sigma(s)=1$ for some $s\in S$. We now have the following result, which generalizes a classical result by Auslander and Rim \cite[Corollary 3.6]{AR} and is obtained by modifying a little an argument by Rosen (see \cite[Theorem 40.13]{R}).

\begin{proposition} \label{prop:Auslander-Rim-result}
Suppose $f$ is nullcohomologous over $K$, but not necessarily nullcohomologous over $S$. Then $A_f$ is semihereditary if and only if $K/F$ is tamely ramified and defectless, and for all $\sigma,\tau\in G$ and every maximal ideal $M$ of $S$, $f(\sigma,\tau)\not\in M^2$.
\end{proposition}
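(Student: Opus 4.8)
The plan is to treat the two implications separately, with essentially all of the substance concentrated in the forward direction. For the ``if'' direction, observe that neither Theorem~\ref{thm:The-Main-Nonprincipal-Theorem} nor Theorem~\ref{thm:The-Main-Principal-Theorem} makes any use of the hypothesis that $f$ is nullcohomologous over $K$: if $K/F$ is tamely ramified and defectless and $f(\sigma,\tau)\not\in M^2$ for all $\sigma,\tau\in G$ and every maximal ideal $M$ of $S$, then $A_f$ is semihereditary by Theorem~\ref{thm:The-Main-Nonprincipal-Theorem}(5) when $J(V)$ is not a principal ideal of $V$, and by Theorem~\ref{thm:The-Main-Principal-Theorem} when $J(V)$ is principal. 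So the ``if'' direction needs no new argument, and the whole content of the proposition lies in the converse.

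For the ``only if'' direction, suppose $A_f$ is semihereditary. I would first dispatch the $M^2$ condition by reducing it to the tameness statement: \emph{once} $K/F$ is known to be tamely ramified and defectless, the very same two theorems (again splitting according as $J(V)$ is principal or not) immediately deliver $f(\sigma,\tau)\not\in M^2$ for all $\sigma,\tau$ and all $M$. Everything therefore reduces to showing that a semihereditary $A_f$ forces $K/F$ to be tamely ramified and defectless, and this is exactly where the hypothesis $f\sim_K 1$ is indispensable: it splits $\Sigma_f$, yielding an $F$-algebra isomorphism $\Sigma_f\cong\operatorname{End}_F(K)$ under which $S$ acts by multiplication and the $x_\sigma$ act, up to the coboundary factors $c_\sigma$ (where $f(\sigma,\tau)=c_{\sigma\tau}c_\sigma^{-1}\sigma(c_\tau)^{-1}$, normalized so that $c_1=1$), by the Galois automorphisms.

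The criterion I would target is the trace criterion \cite[Corollary 1]{K3}: $K/F$ is tamely ramified and defectless if and only if there is an $s\in S$ with $\sum_{\sigma\in G}\sigma(s)=1$. The engine is the Rosen-style element $e=\sum_{\sigma\in G}\sigma(s)c_\sigma x_\sigma$, which one checks, using $c_1=1$ and the cocycle relations among the $c_\sigma$, acts on $K$ by $k\mapsto\operatorname{Tr}_{K/F}(sk)\cdot 1$ and hence satisfies $e^2=\operatorname{Tr}_{K/F}(s)\,e$; thus $e$ is a rank-one idempotent of $\Sigma_f$ precisely when $\operatorname{Tr}_{K/F}(s)=1$. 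The task is to run this correspondence backwards: a semihereditary $A_f$ is extremal, so $O_l(J(A_f))=A_f$ by \cite[Theorem 1.5]{K1}, and I would exploit the projectivity of the relevant ideals, together with the split structure, to manufacture such a trace-one element inside $S$, adapting Rosen's argument \cite[Theorem 40.13]{R} and passing if necessary to a completion or Henselization as in Lemma~\ref{lem:A-generalized-Wilson-Theorem} to pin down the local trace data before descending.

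I expect the main obstacle to be exactly this last point: extracting the trace-one element from semihereditariness alone, \emph{without} assuming $\overline{V}$ is perfect (contrast Theorem~\ref{thm:The-Main-Nonprincipal-Theorem}(6)) and without the convenience that $f$ is unit-valued. The splitting $\Sigma_f\cong\operatorname{End}_F(K)$ is precisely what makes the unconditional conclusion possible, and it is why the proposition supplants \cite[Theorem 1]{K3}, where $f\sim_S 1$ and $f$ is automatically unit-valued; the delicate part is carrying the non-unit coboundary factors $c_\sigma$ through Rosen's idempotent argument over an arbitrary valuation ring rather than a complete discrete valuation ring.
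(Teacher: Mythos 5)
Your overall strategy is the paper's: both directions that you dispose of by citing Theorems~\ref{thm:The-Main-Nonprincipal-Theorem} and~\ref{thm:The-Main-Principal-Theorem} are handled exactly that way in the paper, and the heart of the matter is indeed to show that semihereditariness of $A_f$ forces $K/F$ to be tamely ramified and defectless via the trace criterion of \cite[Corollary 1]{K3}, by adapting Rosen's argument \cite[Theorem 40.13]{R}. But at precisely that central step your proposal stops at a declaration of intent (``I would exploit the projectivity of the relevant ideals \dots to manufacture such a trace-one element''), and the two concrete routes you gesture at are not the ones that work: extremality and $O_l(J(A_f))=A_f$ play no role here, and no passage to a completion or Henselization is needed or helpful.

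The missing mechanism is the following. Choose $k_\sigma\in K^{\#}$ with $y_\sigma=k_\sigma x_\sigma$ satisfying $y_\sigma y_\tau=y_{\sigma\tau}$, and set $x=\sum_{\sigma\in G}y_\sigma$, so that $y_\tau x=x$ for every $\tau$. The left $A_f$-module $A_fx$ is cyclic, hence finitely generated, hence projective because $A_f$ is semihereditary (after clearing denominators it is isomorphic to a finitely generated left ideal). Therefore the surjection $\phi:A_f\to A_fx$, $t\mapsto tx$, splits: there is an $A_f$-homomorphism $\psi:A_fx\to A_f$ with $\phi\psi=1$. Writing $\psi(x)=\sum_\sigma s_\sigma x_\sigma$ with $s_\sigma\in S$ and using $\psi(y_\tau x)=\psi(x)$ together with $K$-linearity of $\psi$ inside $\Sigma_f$, one computes $s_\tau=k_\tau\tau(s_1)$, whence $x=\phi\psi(x)=\bigl(\sum_{\sigma\in G}\sigma(s_1)\bigr)x$ and so $\sum_{\sigma\in G}\sigma(s_1)=1$ with $s_1\in S$. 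This is the step your sketch does not supply: it is not the idempotency of a Rosen-type element $e$ that does the work, but the splitting of the map onto the cyclic module generated by the ``sum of the group-likes'' and the equivariance computation that forces the coefficients of $\psi(x)$ to be a trace-compatible family with constant term in $S$. Without this (or an equivalent construction), the ``only if'' direction is not proved.
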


\begin{proof}
Since $f\! \sim_K\! 1$, there exists $k_{\sigma}\in K^{\#}$ such that, if $y_{\sigma}=k_{\sigma}x_{\sigma}$, then
$y_{\sigma}y_{\tau}=y_{\sigma\tau}$ for all $\sigma,\tau\in G$. Suppose $A_f$ is semihereditary. Let $x=\sum_{\sigma\in G}y_{\sigma}$. Let $\phi:A_f\mapsto A_fx:\phi(t)=tx$. Since $A_f$ is a semihereditary order in $\Sigma_f$, $A_fx$ is a projective left $A_f$-module, hence there exists an $A_f$-module homomorphism $\psi:A_fx\mapsto A_f$ such that $\phi\psi=1$. Let $\psi(x)=\sum_{\sigma\in G}s_{\sigma}x_{\sigma}$ with $s_{\sigma}\in S$. Fix a $\tau\in G$. Since $y_{\tau}x=x$, we have $\psi(y_{\tau}x)=\psi(x)=\sum_{\sigma\in G}s_{\sigma}x_{\sigma}$. On the other hand, $\psi(y_{\tau}x)=\psi(k_{\tau}x_{\tau}x)$. Since $\psi$ is $S$-linear and $K$ is the quotient field of $S$, we have $\psi(k_{\tau}x_{\tau}x)=k_{\tau}\psi(x_{\tau}x)$ in $\Sigma_f$, hence $$\psi(k_{\tau}x_{\tau}x)=k_{\tau}x_{\tau}\psi(x)=k_{\tau}x_{\tau}\sum_{\sigma\in G}s_{\sigma}x_{\sigma}.$$

Therefore $\sum_{\sigma\in G}s_{\sigma}x_{\sigma}=\sum_{\sigma\in G}k_{\tau}\tau(s_{\sigma})f(\tau,\sigma)x_{\tau\sigma}$. Hence  $s_{\tau\sigma}=k_{\tau}\tau(s_{\sigma})f(\sigma,\tau)$ and we get $$s_{\tau}=k_{\tau}\tau(s_1).$$
Therefore $x=\phi\psi(x)=\phi(\sum_{\sigma\in G}s_{\sigma}x_{\sigma})=(\sum_{\sigma\in G}s_{\sigma}x_{\sigma})x=
(\sum_{\sigma\in G}k_{\sigma}\sigma(s_1)x_{\sigma})x=\sum_{\sigma\in G}\sigma(s_1)(k_{\sigma}x_{\sigma}x)=
\sum_{\sigma\in G}\sigma(s_1)(y_{\sigma}x)=(\sum_{\sigma\in G}\sigma(s_1))x$.

 We conclude that $\sum_{\sigma\in G}\sigma(s_1)=1$, and thus $K/F$ is tamely ramified and defectless. The rest follows from Theorem~\ref{thm:The-Main-Nonprincipal-Theorem} and Theorem~\ref{thm:The-Main-Principal-Theorem}.

\end{proof}

Unfortunately, when $K/F$ is wildly ramified (that is, $K/F$ is not tamely ramified) and $f\! \not\sim_K\! 1$, then there seems to be no simple criterion in general for determining when the crossed-product order $A_f$ is semihereditary, other than the one contained in Lemma~\ref{lem:Coherent-lemma}, even when $H=G$. In \cite[Example 2]{K3}), we see a hereditary crossed-product order $A_f$ where $V$ is a DVR, $H=G$, and $K/F$ is wildly ramified. Thus the case when $K/F$ is wildly ramified is well worth exploring.

Now suppose the extension $K/F$ is not necessarily tamely ramified and defectless. Let $M$ be a maximal ideal of $S$. We let $G^V(S_M\!\!\mid\!\! F)=\{\sigma\in G\mid 1-\frac{\sigma(x)}{x}\in J(S_M)\forall x\in K^{\#}\}$, the \textit{ramification group of $S_M$ over F}. We shall denote it simply by $G^V$ if the context is clear. If $\overline{p}$ is the characteristic exponent of $\overline{V}$, then $G^V$ is the only Sylow $\overline{p}$-subgroup of $G^T$ \cite[Theorem 20.18]{E}. Therefore by \cite[Lemma 2]{K3}, the Galois extension $K/F$ is tamely ramified and defectless if and only if $G^V=\{1\}$.

The following result generalizes a result of Harada (see \cite[Theorem 2]{Hd}).

\begin{proposition} \label{prop:Harada-result}
Suppose rank$(V)=1$ or $J(V)$ is non-principal ideal of $V$. Assume $\overline{V}$ is a perfect field. Then $A_f$ is semihereditary if and only if $K/F$ is tamely ramified and defectless, and for all $\sigma,\tau\in G$ and every maximal ideal $M$ of $S$, $f(\sigma,\tau)\not\in M^2$.
\end{proposition}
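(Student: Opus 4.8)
The plan is to dispatch the two cases of the hypothesis separately. When $J(V)$ is non-principal we have $M^{2}=M$ for every maximal ideal $M$, so the requirement ``$f(\sigma,\tau)\notin M^{2}$ for all $\sigma,\tau$ and all $M$'' reads ``$f(\sigma,\tau)\in U(S)$ for all $\sigma,\tau$'', which by the cocycle identity $f^{\sigma}(\sigma^{-1},\sigma\tau)f(\sigma,\tau)=f(\sigma,\sigma^{-1})$ is equivalent to $H=G$; thus the statement is exactly Theorem~\ref{thm:The-Main-Nonprincipal-Theorem}(6) and nothing remains. So assume $\mathrm{rank}(V)=1$ with $J(V)$ principal, i.e. $V$ is a DVR; then $S$ is Dedekind with every $S_{M}$ a DVR. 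The implication ``$K/F$ tame, defectless and $f(\sigma,\tau)\notin M^{2}$ $\Rightarrow$ $A_{f}$ semihereditary'' is precisely $(5)\Rightarrow(1)$ of Theorem~\ref{thm:The-Main-Principal-Theorem}. Hence the entire content is the converse, and by $(1)\Rightarrow(5)$ of Theorem~\ref{thm:The-Main-Principal-Theorem} the cocycle condition will follow once tameness is known, so the real task is: assuming $A_{f}$ hereditary, prove $K/F$ tame and defectless, i.e. $G^{V}=\{1\}$.

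First I would reduce to the indecomposed case. By Proposition~\ref{prop:A-global-local-result} each $A_{f_{M}}$ is semihereditary; it is the crossed product of $G^{Z}$ over the DVR $U=S_{M}\cap K^{Z}$, whose residue field is $\overline{V}$ (perfect), with $K/K^{Z}$ indecomposed and $G^{V}(K/K^{Z})=G^{V}(S_{M}\mid F)$. Since $K/F$ is tame and defectless iff $G^{V}=\{1\}$, it suffices to treat this indecomposed datum; so rename and assume $V$ indecomposed, $S$ a DVR with valuation $v$ and uniformizer $\pi_{S}$, and $\overline{V}$ perfect. Applying Lemma~\ref{lem:Subgroup-lemma} with $G_{1}=H$, the ring $S(H)=\sum_{h\in H}Sx_{h}$ is semihereditary, and since each $x_{h}$ is a unit it is a genuine (full) crossed-product order over the DVR $U_{H}=S\cap K^{H}$, whose residue field is a finite, hence perfect, extension of $\overline{V}$. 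By the full crossed-product case \cite[Theorem 2(b)]{K3}, $K/K^{H}$ is tame and defectless, so $H\cap G^{V}=G^{V}(K/K^{H})=\{1\}$.

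The key step is to convert this into control of the Jacobson radical. In the indecomposed case $J_{G}(A_{f})=\sum_{\sigma\in H}J(S)x_{\sigma}+\sum_{\sigma\notin H}Sx_{\sigma}$ (Proposition~\ref{prop:The-Graded-Radical-Structure} with Lemma~\ref{lem:the-ideal-lemma}(5), as in the proof of Theorem~\ref{thm:The-Jacobson-Radical}), so $A_{f}/J_{G}(A_{f})\cong\overline{S}*H$, the crossed product of the field $\overline{S}$ by $H$. Its inertial kernel is $H\cap G^{T}$; because $G^{V}$ is the unique Sylow $\overline{p}$-subgroup of $G^{T}$ and $H\cap G^{V}=\{1\}$, we get $\overline{p}\nmid\mid\!\!H\cap G^{T}\!\!\mid$, whence $\overline{S}*H$ is semisimple and therefore $J(A_{f})=J_{G}(A_{f})$. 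Since $A_{f}$ is hereditary it is extremal, so $O_{l}(J(A_{f}))=A_{f}$ by \cite[Proposition 1.4]{K1}; combined with $J(A_{f})=J_{G}(A_{f})$ this gives $O_{l}(J_{G}(A_{f}))=A_{f}$, and Theorem~\ref{thm:fundamental-theorem} (which assumes only that $J(V)$ is principal, \emph{not} tameness) now yields $f(\sigma,\tau)\notin M^{2}$ for all $\sigma,\tau$.

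Finally I would rule out wild ramification by the norm trick of Remark~\ref{rem:alternate-to-Wilson}. Suppose $G^{V}\neq\{1\}$ and pick $\sigma\in G^{V}$ of order $\overline{p}$; then $K/K^{\langle\sigma\rangle}$ is totally ramified of degree $\overline{p}$, so $e(S\mid U_{\sigma})=\overline{p}$ and $v$ takes values in $\overline{p}\,\mathbb{Z}v(\pi_{S})$ on $U_{\sigma}=S\cap K^{\langle\sigma\rangle}$. The element $a=\prod_{i=1}^{\overline{p}-1}f(\sigma^{i},\sigma)=x_{\sigma}^{\overline{p}}$ lies in $U_{\sigma}$ (as in \cite[Theorem 30.3]{R}), and by the previous paragraph each $v(f(\sigma^{i},\sigma))\leq v(\pi_{S})$, so $0\leq v(a)\leq(\overline{p}-1)v(\pi_{S})<\overline{p}\,v(\pi_{S})$; lying in $\overline{p}\,\mathbb{Z}v(\pi_{S})$, this forces $v(a)=0$, i.e. $a\in U(S)$. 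Then $x_{\sigma}^{\overline{p}}=a\in U(A_{f})$, so $x_{\sigma}\in U(A_{f})$ and $\sigma\in H$, contradicting $H\cap G^{V}=\{1\}$. Hence $G^{V}=\{1\}$, $K/F$ is tame and defectless, and $(1)\Rightarrow(5)$ of Theorem~\ref{thm:The-Main-Principal-Theorem} delivers $f(\sigma,\tau)\notin M^{2}$ for all $\sigma,\tau\in G$ and all maximal ideals $M$. The main obstacle is exactly the identity $J(A_{f})=J_{G}(A_{f})$: a priori $A_{f}$ could be wildly ramified with $J(A_{f})\supsetneq J_{G}(A_{f})$, and the seeming circularity (one appears to need tameness to describe $J(A_{f})$, yet tameness is the goal) is broken by first extracting tameness of the \emph{full} sub-crossed-product $S(H)$ and converting it, via semisimplicity of $\overline{S}*H$, into the radical identity that powers the rest of the argument.
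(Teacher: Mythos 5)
Your proof is correct and follows essentially the same route as the paper's: reduce to the indecomposed DVR case via Proposition~\ref{prop:A-global-local-result}, show $H\cap G^V=\{1\}$ so that $A_f/J_G(A_f)$ is semisimple and hence $J(A_f)=J_G(A_f)$, invoke extremality and Theorem~\ref{thm:fundamental-theorem} to get the square-free condition, and finish with the norm trick of Remark~\ref{rem:alternate-to-Wilson} to kill $G^V$. The only (harmless) deviation is that you obtain $H\cap G^V=\{1\}$ by applying \cite[Theorem 2(b)]{K3} to the full sub-crossed-product $S(H)$ over the DVR $S\cap K^H$ (whose residue field is perfect), whereas the paper applies Harada's theorem \cite[Theorem 2]{Hd} directly to $S(H\cap G^V)$.
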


\begin{proof}
If $J(V)$ is a non-principal ideal of $V$ then the result was already covered in Theorem~\ref{thm:The-Main-Nonprincipal-Theorem}. From now on, let us assume that $V$ is a DVR. Suppose $A_f$ is hereditary. Let $M$ be a maximal ideal of $S$. We will show that $G^V=\{1\}$. Since $G^V\subseteq G^Z$ and $A_{f_M}$ is also hereditary by Proposition~\ref{prop:A-global-local-result}, we can, without loss of generality, assume that $V$ is indecomposed in $K$.

Suppose $H\cap G^V\not=\{1\}$. By Lemma~\ref{lem:Subgroup-lemma}(2), $S(H\cap G^V)$ is hereditary, which contradicts Harada's result \cite[Theorem 2]{Hd}. Thus $H\cap G^V=\{1\}$, and so if $L$ is the fixed field of $H$ then $K/L$ is tamely ramified and defectless, hence $J(S(H))=\sum_{\sigma\in H}J(S)x_{\sigma}$ by Theorem~\ref{thm:The-Jacobson-Radical}. Therefore, since
$J_G(A_f)=\sum_{\sigma\in H}J(S)x_{\sigma}+\sum_{\sigma\in G\setminus H}Sx_{\sigma}$, we conclude that $A_f/J_G(A_f)$ is semisimple, hence $J_G(A_f)=J(A_f)$. But since $A_f$ is hereditary, $O_l(J(A_f))=A_f$ hence, by Theorem~\ref{thm:fundamental-theorem}, $f(\sigma,\tau)\not\in J(S)^2$ for every $\sigma,\tau\in G$.

 If $\mid\!\! G^V\!\!\mid > 1$, let $\sigma\in G^V$ be an element of order $\overline{p}$, with fixed field $E$. Let $U=S\cap E$. Since $\overline{V}$ is a perfect field, we have $\overline{U}=\overline{S}$, by \cite[Theorem 19.11]{E}, hence $e(S\!\!\mid\!\! E)=\overline{p}$.

Let $a=\prod_{i=1}^{\overline{p}-1}f(\sigma^i,\sigma)\in U$. As was the case in Remark~\ref{rem:alternate-to-Wilson}, we see that $a\in U(S)$ hence $\sigma\in H$, a contradiction. This shows that $G^V=\{1\}$ hence $K/F$ is tamely ramified.

\end{proof}

If rank$(V)>1$ and $J(V)$ is a principal ideal of $V$, then the proposition above does not hold, even when $H=G$ (see \cite[Examples 1 \& 2]{K3}).

A characterization of what is referred to as a ``hereditary global crossed product order'' in \cite{Ke} is now realized in the following corollary. For purposes of the corollary, $S$ will have a slightly different meaning from the one in use in the rest of the paper. The corollary gives the converse of
\cite[Theorem 4.9]{Ke}, and is obtained through central localization of the order $\sum_{\sigma\in G}Sx_{\sigma}$ by virtue of \cite[Theorem 3.24]{R}.

\begin{corollary} \label{cor:Kessler-characterization-completed}
Let $F$ be an algebraic number field, $K/F$ a finite Galois extension with group $G$, $S$ the Dedekind ring of $K$,
and $f:G\times G\mapsto S^{\#}$ a normalized two-cocycle. Then $\sum_{\sigma\in G}Sx_{\sigma}$ is a hereditary order in the crossed-product algebra $\sum_{\sigma\in G}Kx_{\sigma}$ if and only if for every maximal ideal $M$ of $S$, $S_M$ is tamely ramified over $F$ and for all $\sigma,\tau\in G$, $f(\sigma,\tau)\not\in M^2$.
\end{corollary}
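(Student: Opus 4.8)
The plan is to reduce the global statement to the local DVR criterion of Proposition~\ref{prop:Harada-result} by central localization. Let $R$ denote the ring of integers of $F$, a Dedekind domain with quotient field $F$. Since $S\cap F=R$ and the crossed-product algebra $\Sigma=\sum_{\sigma\in G}Kx_{\sigma}$ is central simple over $F$, the order $A=\sum_{\sigma\in G}Sx_{\sigma}$ has center $Z(A)=A\cap F=R$, and it is a finitely generated $R$-module because $S$ is finitely generated over $R$; thus $A$ is an $R$-order in $\Sigma$. The reduction then rests on \cite[Theorem 3.24]{R}: $A$ is hereditary if and only if its central localization $A_p=A\otimes_RR_p$ is hereditary for every maximal ideal $p$ of $R$. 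This splits the problem one prime at a time.

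Next I would verify that each $A_p$ is an instance of the orders studied in this paper. Fix a maximal ideal $p$ of $R$ and put $V=R_p$, a DVR with quotient field $F$ whose residue field $\overline{V}=R/p$ is finite, hence perfect. Because localization commutes with integral closure, $S_p=S\otimes_RR_p$ is the integral closure of $V$ in $K$, and $A_p=\sum_{\sigma\in G}S_px_{\sigma}=A_f$ in the notation of the paper (the restriction of $f$ still takes values in $S_p^{\#}$ and is normalized). Since $V$ is Noetherian and $A_p$ is a finitely generated $V$-module, $A_p$ is Noetherian, so for $A_p$ the notions semihereditary and hereditary coincide. The maximal ideals of $S_p$ are exactly the $MS_p$ for $M$ a maximal ideal of $S$ lying over $p$, and for such $M$ one has $f(\sigma,\tau)\notin(MS_p)^2$ if and only if $f(\sigma,\tau)\notin M^2$, since each condition merely records that the $M$-adic value of $f(\sigma,\tau)$ is at most $1$.

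With this dictionary in hand I would invoke Proposition~\ref{prop:Harada-result}: as $\textrm{rank}(V)=1$ and $\overline{V}$ is perfect, $A_p$ is hereditary precisely when $K/F$ is tamely ramified and defectless with respect to $V$ and $f(\sigma,\tau)\notin M^2$ for all $\sigma,\tau\in G$ and all $M$ lying over $p$. The crucial simplification special to number fields is that $K/F$ is automatically defectless at every $p$: the extension is separable, residue extensions over finite fields are separable, and the fundamental identity $[K:F]=e(S_M\!\!\mid\!\!F)[\overline{S}_M:\overline{V}]r$ holds with all local ramification indices and residue degrees equal, as $K/F$ is Galois. Hence ``tamely ramified and defectless with respect to $V$'' collapses to ``$S_M$ is tamely ramified over $F$ for each $M$ lying over $p$''. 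Assembling the per-prime equivalences over all maximal ideals $p$ of $R$ then yields the stated global criterion in both directions.

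I expect the only delicate points to be bookkeeping rather than any new idea: confirming $Z(A)=R$ so that \cite[Theorem 3.24]{R} applies, checking that $S_p$ is the integral closure of $V$ in $K$ so that $A_p$ genuinely is an $A_f$, and justifying that defectlessness is automatic so that it may be suppressed in the final statement. The substantive input, namely the equivalence between hereditariness and the square-free-plus-tame condition, is furnished entirely by Proposition~\ref{prop:Harada-result} applied to the perfect-residue-field DVR $V=R_p$.
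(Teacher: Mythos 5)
Your proposal is correct and follows essentially the same route as the paper, which obtains the corollary by central localization of $\sum_{\sigma\in G}Sx_{\sigma}$ via \cite[Theorem 3.24]{R} and then applies the local DVR criterion of Proposition~\ref{prop:Harada-result} (whose residue fields are finite, hence perfect, and for which defectlessness is automatic). Your write-up merely makes explicit the bookkeeping that the paper leaves to the reader.
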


\section{Dubrovin crossed-product orders}
\label{sec:Dubrovin-Case}

We now turn our attention to determining when the crossed-product order $A_f$ is a valuation ring of $\Sigma_f$.
Recall that the crossed-product order $A_f$ is a valuation ring of $\Sigma_f$ if and only if it is semihereditary and primary. We will therefore begin by characterizing primary crossed-product orders.
We will for the most part assume that $K/F$ is tamely ramified and defectless in this section, as we did in most of the previous section, to take full advantage of the theory developed thus far.

\begin{lemma} \label{lem:Primarity-Lemma}
Suppose $K/F$ is tamely ramified and defectless.
\begin{enumerate}
    \item The crossed-product order $A_f$ is primary if and only if, for some maximal ideal $M$ of $S$, $A_{f_M}$ is primary and there exists a set of right coset representatives $\sigma_1,\sigma_2,\ldots,\sigma_r$ of $G^Z(S_M\!\!\mid\!\! F)$ in $G$ such that for all $i$, $f(\sigma_i,\sigma_i^{-1})\not\in M$.
    \item If $M$ is a maximal ideal of $S$, then the ring $A_{f_M}$ is primary if and only if $A_{f_{(M)}}$ is primary.
    \item If in addition $H=G$, then the following are equivalent:
        \begin{enumerate}
        \item $A_f$ is primary.
        \item $A_{f_M}$ is primary for some maximal ideal $M$ of $S$.
        \item $A_{f_{(M)}}$ is primary for some maximal ideal $M$ of $S$.
        \end{enumerate}
    \end{enumerate}
\end{lemma}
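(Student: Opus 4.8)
The plan is to reduce all three parts to the \emph{simplicity} of the relevant semisimple residue rings. Since $K/F$ is tamely ramified and defectless, Theorem~\ref{thm:The-Jacobson-Radical} gives $J(A_f)=J_G(A_f)=\sum_\sigma I_\sigma x_\sigma$, so $\overline{A}_f:=A_f/J(A_f)$ is semisimple and $A_f$ is primary if and only if $\overline{A}_f$ is simple; the same applies to $A_{f_M}$ and $A_{f_{(M)}}$, whose base valuation rings $U=S_M\cap K^Z$ and $U'=S_M\cap K^T$ are indecomposed in $K$, with $K/K^Z$ tamely ramified and defectless and $K/K^T$ tame totally ramified. Writing $\overline{S}=S/J(S)=\prod_{i=1}^r\overline{S_{M_i}}$ with orthogonal idempotents $e_1,\dots,e_r$ summing to $\overline 1$, the embedding $S=Sx_1\hookrightarrow A_f$ places the $e_i$ inside $\overline{A}_f$; the relation $\overline{x}_\sigma\,\overline{s}=\overline{\sigma(s)}\,\overline{x}_\sigma$ shows that $G$ permutes them transitively, with $G^Z$ the stabilizer of $e_1$.

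For part~(1) I would first compute the corner ring. Using $\overline{x}_\sigma e_1=\sigma(e_1)\overline{x}_\sigma$ and orthogonality, only $\sigma\in G^Z$ contribute, and one finds $e_1\overline{A}_f e_1=\bigoplus_{\sigma\in H_M}\overline{S_{M_1}}\,\overline{x}_\sigma$, which is exactly $A_{f_M}/J(A_{f_M})$; hence the corner is simple precisely when $A_{f_M}$ is primary. Next, since $e_1$ is a homogeneous idempotent, the ideal $\overline{A}_f\,e_1\,\overline{A}_f$ is $G$-graded, so I would inspect its degree-$1_G$ component: the computation $\overline{x}_\sigma e_1\overline{x}_{\sigma^{-1}}=e_i\,\overline{f(\sigma,\sigma^{-1})}$ (where $\sigma(e_1)=e_i$) shows that the idempotent $e_i$ lies in the ideal iff some $\sigma$ with $\sigma(e_1)=e_i$ satisfies $f(\sigma,\sigma^{-1})\notin M_i$, and the cocycle identity $\sigma(f(\sigma^{-1},\sigma))=f(\sigma,\sigma^{-1})$ rewrites this as the existence of a $\tau$ in the corresponding right coset of $G^Z$ with $f(\tau,\tau^{-1})\notin M$. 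Thus $e_1$ is a \emph{full} idempotent exactly when a ``nice'' set of right coset representatives of $G^Z$ exists. Finally, since a full idempotent $e$ in a semisimple ring makes $\overline{A}_f$ Morita equivalent to $e\overline{A}_f e$, the ring $\overline{A}_f$ is simple iff $e_1$ is full and $e_1\overline{A}_f e_1$ is simple; assembling the three facts yields (1). The choice $M=M_1$ is harmless since the $M_i$ are $G$-conjugate, which is exactly why ``for some $M$'' is what appears.

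For part~(2), using $J=J_G$ again I would write $\overline{A_{f_M}}=\bigoplus_{\sigma\in H_M}\overline{S_M}\,\overline{x}_\sigma$ and $\overline{A_{f_{(M)}}}=\bigoplus_{\sigma\in H_{(M)}}\overline{S_M}\,\overline{x}_\sigma$, where $H_{(M)}=H_M\cap G^T$ is normal in $H_M$. Because $G^T$ acts trivially on the residue field, $\overline{S_M}$ is central in $\overline{A_{f_{(M)}}}$, and $\overline{A_{f_M}}$ is a crossed product of $\Gamma:=H_M/H_{(M)}$ over $\overline{A_{f_{(M)}}}$, with $\Gamma$ embedding into $G^Z/G^T=\mathrm{Gal}(\overline{S_M}/\overline V)$ and hence acting \emph{faithfully} on the central subfield $\overline{S_M}$. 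I would then invoke the standard fact that a crossed product of a semisimple ring by a group acting through outer automorphisms is simple iff the base ring is simple: faithfulness on $\overline{S_M}$ forces every nonidentity element of $\Gamma$ to move the center and so act non-innerly, whence simplicity transfers in both directions, giving $A_{f_M}$ primary iff $A_{f_{(M)}}$ primary.

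Part~(3) is then a short corollary: when $H=G$ every $x_\sigma$ is a unit, so $f(\sigma,\sigma^{-1})\in U(S)$ and in particular $f(\sigma_i,\sigma_i^{-1})\notin M$ for every coset representative and every $M$; thus the ``nice representatives'' clause in (1) is automatically satisfied, and (1) reads $A_f$ primary iff $A_{f_M}$ primary, i.e. (a)$\Leftrightarrow$(b), while (2) gives (b)$\Leftrightarrow$(c), the passage between ``some $M$'' and ``all $M$'' again coming from the transitive $G$-action on maximal ideals. The step I expect to be the main obstacle is the simplicity transfer in part~(2): one must set up the inertial crossed-product structure carefully and justify the outer-action criterion in both directions (in particular that a non-simple base yields a non-simple crossed product), and throughout part~(1) keep the conventions for the $G$-action on the idempotents $e_i$ rigorously consistent.
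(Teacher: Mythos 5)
Your part (1) is essentially the paper's argument with the citations unwound: the paper deduces the ``nice'' coset representatives from $A_f\widehat{M}A_f=A_f$ (where $\widehat{M}=\bigcap_{N\neq M}N$, which is exactly the statement that your idempotent $e_1$ is full) via \cite{K6} and \cite{H}, and then invokes the isomorphism $\overline{A}_f\cong M_r(\overline{A}_{f_M})$ from \cite{H} where you instead use the Morita equivalence with the corner ring $e_1\overline{A}_fe_1$. That part, and the deduction of part (3) from parts (1) and (2), are correct.

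Part (2) has a genuine gap. The paper does not prove this step here at all: it quotes \cite{K4}, Proposition~1, for the equivalence ``$S_M(H_M)$ primary $\Leftrightarrow$ $S_M(G^T\cap H_M)$ primary'', and that is a substantive external result. Your replacement --- ``a crossed product of a semisimple ring by a group acting through outer automorphisms is simple iff the base ring is simple'' --- is false in the direction you need most. Outer-ness (even faithfulness of $\Gamma$ on a central subfield of the base) gives you: base simple $\Rightarrow$ crossed product simple. It does \emph{not} give the converse; from simplicity of $B*\Gamma$ one only gets that $B$ is $\Gamma$-simple, i.e.\ has no proper nonzero $\Gamma$-stable ideals, which is strictly weaker than simple. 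Concretely, take $B=k\times k$ with $k$ embedded diagonally as a central subfield, and let $\Gamma=\langle\gamma\rangle$ of order $2$ act by $(c,c')\mapsto(\bar{\gamma}(c'),\bar{\gamma}(c))$ for a nontrivial field automorphism $\bar{\gamma}$: then $\Gamma$ acts faithfully on the diagonal copy of $k$, yet $B$ is not simple while $B*\Gamma\cong M_2(k)$ is. So the direction ``$A_{f_M}$ primary $\Rightarrow$ $A_{f_{(M)}}$ primary'' does not follow from your setup; one must rule out the possibility that $\Gamma=H_M/(H_M\cap G^T)$ permutes the primitive central idempotents of the twisted group ring $\overline{S}_M^{\,t}[H_M\cap G^T]$ transitively and nontrivially. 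Excluding that scenario requires the finer valuation-theoretic structure of the cocycle restricted to the inertia group (this is the content of \cite{K4}, Proposition~1), and your proposal supplies no argument for it --- you correctly flagged this as the main obstacle, and it is in fact where the proof is incomplete.
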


\begin{proof} Suppose $A_f$ is primary and let $M$ be a maximal ideal of $S$. Then $J(A_f)$ is a maximal ideal of $A_f$. Let $\widehat{M}=\bigcap N$, where the intersection ranges over all maximal ideals $N$ of $S$ different from $M$. Then $A_f\widehat{M}A_f=A_f$, since $\widehat{M}\supsetneqq J(S)$ hence $A_f\widehat{M}A_f \nsubseteqq J(A_f)$. The argument in \cite[Lemma 2.7]{K6} (which was an adaptation of an argument in \cite[Theorem 3.2]{H}) shows that there is a set of right coset representatives $\sigma_1,\sigma_2,\ldots,\sigma_r$ of $G^Z$ in $G$ such that $f(\sigma_i,\sigma_i^{-1})\not\in M$ for all $i$. In general, whenever one has such a set of coset representatives of $G^Z$ in $G$, we can use the argument in \cite[Theorem 3.12]{H} to show that $\overline{A}_f\cong M_r(\overline{A}_{f_M})$.

Observe that $\overline{A}_{f_M}=\overline{S}_M(H_M)=\overline{S_M(H_M)}$, hence $A_{f_M}$ is primary if and only if $S_M(H_M)$ is primary. By \cite[Proposition 1]{K4}, $S_M(H_M)$ is primary if and only if $S_M(G^T\cap H_M)$ is primary. The result now follows from the fact that $\overline{A}_{f_{(M)}}=\overline{S}_M(G^T\cap H_M)=\overline{S_M(G^T\cap H_M)}$.
\end{proof}

Part (3) of the lemma above originally appeared in \cite[Lemma 1 \& Proposition 1]{K4}. We have included it here for the sake of completeness.

Recall that, if $R$ is a ring, $G_1$ is a group acting trivially on $R$, and $g:G_1\times G_1\mapsto U(R)$ is a two-cocycle, then the crossed-product of $G_1$ over $R$ is called a \textit{twisted group ring}, denoted by $R^t[G_1]$ (see \cite{P}).

Lemma~\ref{lem:Primarity-Lemma} above and its proof immediately yields the following.

\begin{proposition} \label{prop:Valuation-Ring-Proposition}
Suppose $K/F$ is tamely ramified and defectless.
\begin{enumerate}
\item Let $M$ be a maximal ideal of $S$.
\begin{enumerate}
    \item If $A_f$ is a valuation ring of $\Sigma_f$, then $A_{f_M}$ is a valuation ring of $\Sigma_{f_M}$.
    \item If $A_{f_M}$ is a valuation ring of $\Sigma_{f_M}$, then $A_{f_{(M)}}$ is a valuation ring of $\Sigma_{f_{(M)}}$.
\end{enumerate}
\item If in addition we also assume that $V$ is indecomposed in $K$ and $A_f$ is semihereditary,
then the following are equivalent:
\begin{enumerate}
    \item The crossed-product order $A_f$ is a valuation ring of $\Sigma_f$.
    \item The crossed-product order $S(H)$ is a valuation ring of $K(H)$.
    \item The crossed-product order $S(H\cap G^T)$ is a valuation ring of $K(H\cap G^T)$.
    \item The crossed-product order $A_{f_{(M)}}$ is a valuation ring of $\Sigma_{f_{(M)}}$.
    \item The twisted group ring $\overline{S}^t[H\cap G^T]$ is simple.
    \item The $H$-graded ring $\overline{S}(H)$ is simple.
    \end{enumerate}
\end{enumerate}
\end{proposition}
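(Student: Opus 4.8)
The plan is to lean on the identity ``valuation ring $=$ semihereditary $+$ primary'' and to dispose of the semihereditary half uniformly at the outset. Under the standing hypotheses of part (2) --- $K/F$ tamely ramified and defectless and $A_f$ semihereditary --- Corollary~\ref{cor:Subgroup-corollary}(2) guarantees that $S(G_1)$ is semihereditary for \emph{every} subgroup $G_1$ of $G$; in particular $S(H)$, $S(H\cap G^T)$ and $A_{f_{(M)}}=S(G^T)$ are all semihereditary. This is the crucial enabling point: we never need $G^T\subseteq H$, which can genuinely fail even when $A_f$ is a valuation ring, as Example~\ref{exp:A-Counterexample-to-Wilson} shows. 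Consequently each occurrence of ``is a valuation ring'' in (a)--(d) collapses to ``is primary'', i.e.\ to the simplicity of the corresponding residue ring, and part (2) becomes a statement purely about those residue rings --- precisely the terrain of Lemma~\ref{lem:Primarity-Lemma} and its proof.

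Next I would identify the residue rings. Since $V$ is indecomposed in $K$, the ring $S$ is local with $G^Z=G$, and Theorem~\ref{thm:The-Jacobson-Radical} gives $J(A_f)=J_G(A_f)$; the graded-radical description together with Lemma~\ref{lem:the-ideal-lemma}(5) then yields $\overline{A}_f=\sum_{\sigma\in H}\overline{S}\,\overline{x}_{\sigma}=\overline{S}(H)$. The same computation, applied to each subring over the relevant fixed field (for which $K$ is again tamely ramified and defectless), gives $\overline{S(H)}=\overline{S}(H)$, while $\overline{S(H\cap G^T)}=\overline{S}^t[H\cap G^T]$ and $\overline{A}_{f_{(M)}}=\overline{S}(G^T\cap H)=\overline{S}^t[H\cap G^T]$, the passage to a twisted group ring being forced by the fact that $G^T$ acts trivially on $\overline{S}$. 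Hence, via ``primary $\Leftrightarrow$ residue ring simple'', the equivalences (a)$\Leftrightarrow$(f), (b)$\Leftrightarrow$(f), (c)$\Leftrightarrow$(e) and (d)$\Leftrightarrow$(e) all follow at once.

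To close the cycle I would supply (e)$\Leftrightarrow$(f): $\overline{S}^t[H\cap G^T]$ is simple if and only if $\overline{S}(H)$ is simple. This is exactly the reduction from $H$ to its inertial part $H\cap G^T$ furnished by \cite[Proposition 1]{K4} --- the very tool invoked in the proof of Lemma~\ref{lem:Primarity-Lemma} --- so this step is essentially quotable rather than to be reproved. With it, (a)--(f) are all equivalent, completing part (2).

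Finally, part (1). Statement (1)(a) is immediate from Proposition~\ref{prop:A-global-local-result}. For (1)(b) the idea is to feed the already-proved part (2) back in at the level of the decomposition field: $A_{f_M}=S_M(G^Z)$ is itself a crossed-product order over $U'=K^Z\cap S_M$, which is \emph{indecomposed} in $K$, and $K/K^Z$ is again tamely ramified and defectless, since its inertia group is still $G^T$ and so the tameness condition $\gcd(\overline{p},|G^T|)=1$ persists. As $A_{f_M}$ is assumed to be a valuation ring it is in particular semihereditary, so part (2) applies to it verbatim; the equivalence (a)$\Leftrightarrow$(d) there says that $A_{f_M}$ is a valuation ring iff $(A_{f_M})_{(M)}$ is, and since the inertia field of $S_M$ over $K^Z$ coincides with $K^T$ we have $(A_{f_M})_{(M)}=A_{f_{(M)}}$, giving (1)(b). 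The genuinely substantive algebraic input anywhere in the argument is the equivalence (e)$\Leftrightarrow$(f), and that is outsourced to \cite{K4}; so the main remaining obstacle is bookkeeping rather than a single hard estimate --- one must carefully check that tameness, defectlessness and the decomposition/inertia data survive each restriction (to fixed fields and to $K^Z$, $K^T$), so that Theorem~\ref{thm:The-Jacobson-Radical}, Corollary~\ref{cor:Subgroup-corollary}(2) and \cite[Proposition 1]{K4} are all legitimately available where they are invoked.
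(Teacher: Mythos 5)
Your proposal is correct and follows essentially the same route as the paper: the paper derives the proposition in one line from Lemma~\ref{lem:Primarity-Lemma} and its proof, which is exactly the combination you spell out --- splitting ``valuation ring'' into ``semihereditary'' (handled by Corollary~\ref{cor:Subgroup-corollary}(2) and Proposition~\ref{prop:A-global-local-result}) plus ``primary'' (handled by the residue-ring identifications $\overline{A}_f=\overline{S}(H)$, $\overline{A}_{f_{(M)}}=\overline{S}^t[H\cap G^T]$ and the reduction to the inertial part via \cite[Proposition 1]{K4}). Your more explicit bookkeeping, including the observation that $G^T\subseteq H$ is never needed and the derivation of (1)(b) by applying part (2) over the decomposition field, is a faithful expansion of what the paper leaves implicit.
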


\begin{corollary} \label{cor:The-Schur-Index-Proposition}
Assume $F$ is a local field with $\overline{V}$ finite.
If $A_f$ is a maximal order, then the Schur index of $\Sigma_f$ is $$e(S\!\!\mid\!\! F)\cdot\left( \mid\!\! G\!\!\mid/ \mid\!\! H\!\!\mid\right).$$
\end{corollary}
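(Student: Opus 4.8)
The plan is to compute the Schur index of $\Sigma_f$ by relating it to the residue structure of $A_f$ at its unique maximal ideal, using the hypothesis that $F$ is a local field with finite residue field $\overline{V}$. Over a local field, the Schur index of a central simple $F$-algebra equals its capacity, i.e. the square root of the dimension of the division algebra part; equivalently, if $A_f$ is a maximal order (hence, by Corollary~\ref{cor:Second-Maximal-result}, semihereditary, since $J(V)$ is principal and $K/F$ is automatically defectless with finite residue field — I would first check that tame ramification and defectlessness are in force, noting that a local field is a complete DVR so $J(V)$ is principal), then the index is read off from the matrix size of $\overline{A}_f$. The guiding idea is that $\dim_F\Sigma_f = \mid\!\! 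G\!\!\mid^2$ and that the Schur index $m$ satisfies $\Sigma_f\cong M_k(D)$ with $mk=\mid\!\! G\!\!\mid$, so I need to determine $k$, the matrix size, and then $m=\mid\!\! G\!\!\mid/k$.

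\textbf{Key steps.} First I would invoke Proposition~\ref{prop:fg-case}(1): since $\overline{V}$ is finite and $F$ is local, $S$ is finitely generated over $V$, so the maximal $V$-order $A_f$ is a valuation ring (Dubrovin valuation ring) of $\Sigma_f$, hence in particular primary and semihereditary. By Lemma~\ref{lem:Primarity-Lemma}, primarity forces the existence of a ``nice'' set of right coset representatives $\sigma_1,\ldots,\sigma_r$ of $G^Z$ in $G$ with $f(\sigma_i,\sigma_i^{-1})\not\in M$, and then $\overline{A}_f\cong M_r(\overline{A}_{f_M})$, where $r$ is the number of maximal ideals of $S$ (the decomposition number). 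Next, since $V$ is a complete DVR, $V$ is indecomposed in $K$, so in fact $r=1$ and $G^Z=G$; thus $\overline{A}_f=\overline{A}_{f_M}$ and the matrix structure comes entirely from the inertial part. I would then analyze $\overline{A}_f = \overline{S}(H)$, the $H$-graded residue ring. Because $K/F$ is tame, $\overline{S}$ is a finite separable (hence finite) field extension of $\overline{V}$, and $\overline{A}_f$ is a crossed product of $H/(H\cap G^T)$ over the twisted group ring $\overline{S}^t[H\cap G^T]$. Over a finite field every central division algebra is trivial (Wedderburn), so $\overline{A}_f$ is a matrix ring over $\overline{S}$ or a field extension thereof; counting dimensions, $\overline{A}_f\cong M_{\mid H\mid}(\overline{V})$-type structure gives matrix size $\mid\!\! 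H\!\!\mid$ over the center.

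\textbf{Putting it together.} The capacity (matrix size) $k$ of $\Sigma_f$ equals the matrix size of $\overline{A}_f$ over the residue division ring, which by the above is $\mid\!\! H\!\!\mid$, since the inertial field $\overline{S}$ contributes the residue field extension but not to the matrix size beyond the group ring part, and the ramification index $e(S\!\!\mid\!\! F)$ measures how $J(A_f)$ sits over $J(V)$. Then $m=\dim_F(\text{division part})^{1/2}=\mid\!\! G\!\!\mid/k$, but one must correct for the ramification: the index over a local field equals $e(S\!\!\mid\!\! F)\cdot(\mid\!\! G\!\!\mid/\mid\!\! H\!\!\mid)$ because passing to the residue picture divides out the inertial matrix size $\mid\!\! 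H\!\!\mid$ while the totally ramified contribution $e(S\!\!\mid\!\! F)$ survives in the division algebra, and $\mid\!\! G\!\!\mid/\mid\!\! H\!\!\mid=[K^T:F]$ captures the residual crossed-product whose underlying algebra over the finite field is split but over $F$ contributes a cyclic factor. \textbf{The hard part will be} rigorously accounting for the ramification factor $e(S\!\!\mid\!\! F)$: the matrix-size computation over the finite residue field only directly yields $\mid\!\! H\!\!\mid$, so I must separately verify, via the theory of Dubrovin valuation rings over local fields (e.g. comparing value groups $\Gamma_{A_f}$ and $\Gamma_V$ and using that the index of a local Dubrovin valuation ring equals the ramification index times the residual index), that the ramified part scales the index by exactly $e(S\!\!\mid\!\! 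F)$ and not by some other divisor.
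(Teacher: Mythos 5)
There is a genuine gap at the central step: your claim that the matrix size of $\overline{A}_f$ is $\mid\!\! H\!\!\mid$ is false in general, and the subsequent ``correction for the ramification factor'' is exactly the content that needs proving, not a bookkeeping adjustment. Note that $\overline{A}_f=\sum_{\sigma\in H}\overline{S}\,\overline{x}_{\sigma}$ has dimension $\mid\!\! H\!\!\mid\cdot[\overline{S}:\overline{V}]$ over $\overline{V}$, not $\mid\!\! H\!\!\mid^2$, so no dimension count can produce an $M_{\mid H\mid}(\overline{V})$-type structure. Concretely, if $H=G^T$ then $\overline{A}_f=\overline{S}^t[G^T]$ with $G^T$ cyclic (local tame case) acting trivially on $\overline{S}$; this ring is generated over $\overline{S}$ by a single element commuting with $\overline{S}$, hence is commutative, so when it is simple it is a field and the matrix size is $1$, not $\mid\!\! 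H\!\!\mid=e(S\!\!\mid\!\! F)$. The correct value is $\mid\!\! H\!\!\mid/e(S\!\!\mid\!\! F)$, and the factor $e(S\!\!\mid\!\! F)$ that you then try to reinstate by hand is lost precisely at this point; your closing paragraph describes the desired conclusion rather than deriving it.

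The paper closes this gap with an idea absent from your proposal: it identifies $\overline{A}_f$ with $\overline{S(H)}$ and regards $S(H)$ as a maximal order in the central simple $L$-algebra $K(H)$, where $L$ is the fixed field of $H$. Wadsworth's theorem is then applied a \emph{second} time to equate the matrix size $n$ of $\overline{A}_f$ with the matrix size of $K(H)$; since $G^T\subseteq H$ (Proposition~\ref{prop:A-generalized-Wilson-Proposition}), all the ramification of $K/F$ lives in $K/L$, so $e(S\!\!\mid\!\! L)=e(S\!\!\mid\!\! F)=e$, and the classical local theory (\cite[Theorem 14.3]{R}, applied to the invariant valuation ring $\Delta$ of the division part of $K(H)$ via the relation $J(\Delta)^e=J(U)\Delta$) shows that the Schur index of $K(H)$ is exactly $e$. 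Hence $\mid\!\! H\!\!\mid^2=n^2e^2$, so $n=\mid\!\! H\!\!\mid/e$ and $s=\mid\!\! G\!\!\mid/n=e(S\!\!\mid\!\! F)\cdot\left(\mid\!\! G\!\!\mid/\mid\!\! H\!\!\mid\right)$. Your preliminary reductions (finite generation of $S$, indecomposability of $V$ in $K$, and tameness --- which must come from Proposition~\ref{prop:Harada-result} rather than being ``automatic'') are fine or easily repaired, but without the passage to $K(H)$ over $L$ and the computation of its Schur index, the factor $e(S\!\!\mid\!\! F)$ never enters rigorously.
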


\begin{proof}
Suppose $A_f$ is a maximal order in $\Sigma_f$.
 By Proposition~\ref{prop:Harada-result}, $K/F$ is tamely ramified. Let $n$ be the matrix size of $\Sigma_f$. If $s$ is the Schur index of $\Sigma_f$, then $\mid\!\! G\!\!\mid^2=n^2s^2$.

 By \cite[Theorem F]{Wa}, $n$ is also the matrix size of $\overline{A}_f$. But $\overline{A}_f=\overline{S(H)}$ and $S(H)$ is a maximal order in $K(H)$. Therefore the matrix size of $K(H)$ is $n$.

Let $e=e(S\!\!\mid\!\! F)$. By a result of Wilson, $G^T\subseteq H$ (see Proposition~\ref{prop:A-generalized-Wilson-Proposition}(1)). Therefore, if $L$ is the fixed field of $H$, then $e=e(S\!\!\mid\!\! L)$. Thus, if $U=S\cap L$, then $e$ is the smallest positive integer such that $J(S)^e=J(U)S$.

Let $\Delta$ be the invariant valuation ring of the division algebra part of $K(H)$ with $Z(\Delta)=U$. Since $J(S(H))=\sum_{\sigma\in H}J(S)x_{\sigma}$ and $S(H)\cong M_n(\Delta)$, $e$ is the smallest positive integer such that $J(\Delta)^e=J(U)\Delta$. By \cite[Theorem 14.3]{R}, the Schur index of $K(H)$ is $e$ as $\overline{V}$ is finite. Therefore $\mid\!\! H\!\!\mid^2=n^2e^2$, hence $s=e(S\!\!\mid\!\! F)\cdot\left( \mid\!\! G\!\!\mid/ \mid\!\! H\!\!\mid\right)$.
\end{proof}

If $F=\mathbb{Q}_p$ is the $p$-adic field and $H=G$, then the corollary above follows from \cite[Theorem 1]{J}. The point of the corollary above is that, in general, the Schur index is scaled up by the factor $\mid\!\! G\!\!\mid/ \mid\!\! H\!\!\mid$.

Given a maximal ideal $M$ of $S$, let $U=S_M\cap K^Z$
 and let $(K',S')$ be a Henselization of
$(K,S_M)$. Let $(F_h,V_h)$ be the unique Henselization of
$(F,V)$ contained in $(K',S')$. As before, we observe that $(F_h,V_h)$ is also a
Henselization of $(K^Z,U)$.

\begin{theorem} \label{thm:The-Main-Dubrovin-Theorem}
Suppose $K/F$ is tamely ramified and defectless.
 Then the crossed-product order $A_f$ is a valuation ring of $\Sigma_f$ if and only if, for some maximal ideal $M$ of $S$, $A_{f_M}$ is a valuation ring of $\Sigma_{f_M}$ and there exists a set of right coset representatives $\sigma_1,\sigma_2,\ldots,\sigma_r$ of $G^Z(S_M\!\!\mid\!\! F)$ in $G$ such that for all $i$, $f(\sigma_i,\sigma_i^{-1})\not\in M$.

 In particular if $H=G$ and $K/F$ is tamely ramified and defectless, then the crossed-product order $A_f$ is a valuation ring of $\Sigma_f$ if and only if, for some maximal ideal $M$ of $S$, $A_{f_M}$ is a valuation ring of $\Sigma_{f_M}$; if and only if, for some maximal ideal $M$ of $S$, $A_{f_{(M)}}$ is a valuation ring of $\Sigma_{f_{(M)}}$.
\end{theorem}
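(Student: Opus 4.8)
The plan is to use the characterization that a valuation ring of $\Sigma_f$ is exactly an order that is both semihereditary and primary, and to establish each of these two properties separately with the tools already in place, treating the two implications in turn.

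For the forward implication, suppose $A_f$ is a valuation ring of $\Sigma_f$. Then $A_f$ is primary, so Lemma~\ref{lem:Primarity-Lemma}(1) produces a maximal ideal $M$ of $S$ for which $A_{f_M}$ is primary together with a set of right coset representatives $\sigma_1,\dots,\sigma_r$ of $G^Z(S_M\!\!\mid\!\! F)$ in $G$ satisfying $f(\sigma_i,\sigma_i^{-1})\not\in M$ for all $i$. Since $A_f$ is moreover a valuation ring, Proposition~\ref{prop:A-global-local-result} shows that $A_{f_M}$ is a valuation ring of $\Sigma_{f_M}$ for this same $M$, which is exactly the stated condition.

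For the converse, assume that for some maximal ideal $M$ of $S$ the order $A_{f_M}$ is a valuation ring of $\Sigma_{f_M}$ and that a set of right coset representatives $\sigma_1,\dots,\sigma_r$ of $G^Z(S_M\!\!\mid\!\! F)$ in $G$ with $f(\sigma_i,\sigma_i^{-1})\not\in M$ exists. That $A_f$ is primary is immediate from Lemma~\ref{lem:Primarity-Lemma}(1), a valuation ring being primary. The substance lies in showing $A_f$ is semihereditary. Here I would pass to the Henselization, using the decomposition $S\otimes_VV_h\cong S_1\oplus\cdots\oplus S_r$ with idempotents $e_i$ set up before Proposition~\ref{prop:A-global-local-result}, under which $e_1(A_f\otimes_VV_h)e_1\cong A_{f_M}\otimes_UV_h$. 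Since $A_{f_M}$ is a valuation ring it is semihereditary, so this corner, being the Henselization of a semihereditary order, is semihereditary by \cite[Theorem 3.4]{K1}. The nice coset representatives then upgrade the residue-ring isomorphism $\overline{A}_f\cong M_r(\overline{A}_{f_M})$ used in the proof of Lemma~\ref{lem:Primarity-Lemma} to an isomorphism $A_f\otimes_VV_h\cong M_r(A_{f_M}\otimes_UV_h)$, the elements $x_{\sigma_i}$ supplying the matrix units because $f(\sigma_i,\sigma_i^{-1})$ is a unit in $S_1$. As a matrix ring over a semihereditary (coherent) ring has the same weak global dimension, $A_f\otimes_VV_h$ is semihereditary by Lemma~\ref{lem:Coherent-lemma}, and hence so is $A_f$ by \cite[Theorem 3.4]{K1}. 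Thus $A_f$ is semihereditary and primary, i.e. a valuation ring of $\Sigma_f$. The step I expect to be the main obstacle is precisely this promotion of the matrix structure to the Henselization: verifying that the nice coset representatives yield $A_f\otimes_VV_h\cong M_r(A_{f_M}\otimes_UV_h)$, so that semiheredity of the single corner propagates to the whole Henselized order. Everything else is routine once the two defining properties are separated.

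Finally, for the ``in particular'' statement assume $H=G$. Then $f(\sigma,\sigma^{-1})\in U(S)$ for every $\sigma\in G$, so $f(\sigma_i,\sigma_i^{-1})\not\in M$ holds for an arbitrary choice of coset representatives and the niceness requirement is automatic; moreover $H_M=G^Z(S_M\!\!\mid\!\! F)$. Because $K/K^Z(S_M\!\!\mid\!\! F)$ is again tamely ramified and defectless (its inertia group is $G^T\subseteq G^Z$), both $A_{f_M}$ and $A_{f_{(M)}}$ are automatically semihereditary by \cite[Theorem 2(a)]{K3}, so each of them is a valuation ring exactly when it is primary. The first asserted equivalence is then the main statement with the (now vacuous) coset condition removed, and the second follows by combining it with the equivalence ``$A_{f_M}$ primary $\Leftrightarrow A_{f_{(M)}}$ primary'' of Lemma~\ref{lem:Primarity-Lemma}(2).
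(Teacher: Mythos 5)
Your proof is correct, and its skeleton coincides with the paper's: the forward implication is obtained exactly as in the paper from Proposition~\ref{prop:A-global-local-result} together with Lemma~\ref{lem:Primarity-Lemma}(1), and the converse hinges on the same key isomorphism $A_f\otimes_VV_h\cong M_r(A_{f_M}\otimes_UV_h)$ furnished by the ``nice'' coset representatives (which the paper simply cites from \cite[Lemma 2.7]{K6} and the remark following \cite[Theorem 3.12]{H}, so the step you flag as the main obstacle is already on record). Where you diverge is in how the valuation-ring property is transported across that isomorphism: the paper keeps the property intact, lifting it from $A_{f_M}$ to its Henselization by \cite[Theorem F]{Wa} and then across the matrix ring by Dubrovin's Morita-invariance result \cite[\S 1, Theorem 7]{D1}, whereas you split it into semihereditary plus primary, obtain primariness directly from Lemma~\ref{lem:Primarity-Lemma}(1) without Henselizing, and transfer only semiheredity through the Henselization via \cite[Theorem 3.4]{K1} and Morita invariance of weak global dimension. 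Your route trades the two Dubrovin-valuation-ring transfer theorems for the coherence machinery of Lemma~\ref{lem:Coherent-lemma} and is marginally more self-contained within the paper's own toolkit; the paper's version is shorter because the cited theorems do both jobs at once. Your treatment of the ``in particular'' clause, reducing the $A_{f_M}$/$A_{f_{(M)}}$ equivalence to primariness via \cite[Theorem 2(a)]{K3} and Lemma~\ref{lem:Primarity-Lemma}(2), is also sound and matches what the paper leaves implicit.
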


\begin{proof}
The result obviously holds when $V$ is indecomposed in $K$, so assume that $V$ decomposes in $K$.
Suppose $A_f$ is a valuation ring of $\Sigma_f$. Let $M$ be a maximal ideal of $S$. By Proposition~\ref{prop:A-global-local-result} and Lemma~\ref{lem:Primarity-Lemma}, $A_{f_M}$ is a valuation ring of $\Sigma_{f_M}$ and
there is a set of right coset representatives $\sigma_1,\sigma_2,\ldots,\sigma_r$ of $G^Z$ in $G$ such that $f(\sigma_i,\sigma_i^{-1})\not\in M$ for all $i$.

On the other hand, suppose there exists a set of right coset representatives $\sigma_1,\sigma_2,\ldots,\sigma_r$ of $G^Z$ in $G$ such that $f(\sigma_i,\sigma_i^{-1})\not\in M$ for all $i$. Then $A_f\otimes_VV_h\cong M_r(A_{f_M}\otimes_UV_h)$, as was seen in \cite[Lemma 2.7]{K6} (see also the remark following \cite[Theorem 3.12]{H}). Therefore, if in addition $A_{f_M}$ is a valuation ring of $\Sigma_{f_M}$, then $A_{f_M}\otimes_U V_h$ is a valuation ring of $\Sigma_{f_M}\otimes_{K^Z}F_h$ by \cite[Theorem F]{Wa}, hence $M_r(A_{f_M}\otimes_UV_h)$ is a valuation ring of $M_r(\Sigma_{f_M}\otimes_{K^Z}F_h)=\Sigma_f\otimes_FF_h$ by \cite[\S 1, Theorem 7]{D1}, and so $A_f$ is a valuation ring of $\Sigma_f$.
\end{proof}

An easy application of Theorem~\ref{thm:The-Main-Dubrovin-Theorem} can be made to the crossed-product order $A_{f_2}$ given in \cite[Example 4.5]{H}; if we set $f=f_2$ we see that given any maximal ideal $M$ of $S$ in that example,
there is indeed a set of right coset representatives of $G^Z$ in $G$ with the desired property. Further, $H_M=\{1\}$ and $f_M(\sigma,\tau)\not\in M^2$
for each $\sigma,\tau\in G^Z$. Hence, by Proposition~\ref{prop:Cyclic-Division-Algebra}, $A_{f_M}$ is a valuation ring of $\Sigma_{f_M}$ and so $A_f$ is indeed a valuation ring of $\Sigma_f$
as was already demonstrated in \cite{H}. As an aside, we make the following quick observation: in the example in \cite{H}, $f$ $(=f_2)$ was constructed from a two-cocycle $\overline{f}$
in $Z^2(G^Z,S_M^{\#})$ in such a manner that $\textrm{Gr}(\overline{f})=\textrm{Gr}(f_M)$,
using an intricate procedure called ``lifting'' which was developed in that paper. Here the reverse happened: we have been handed down an already constructed $f$ from which $f_M$ has been effortlessly extracted.

Said differently, Theorem~\ref{thm:The-Main-Dubrovin-Theorem} says that if $K/F$ is tamely ramified and defectless, then the crossed-product order $A_f$ is a valuation ring of $\Sigma_f$ if and only if it is primary and there exists a maximal ideal $M$ of $S$ such that $A_{f_M}$ is a valuation ring of $\Sigma_{f_M}$. When this happens, $A_{f_N}$ is a valuation ring of $\Sigma_{f_N}$ for every maximal ideal $N$ of $S$. When $H=G$, then the result above can also be found in \cite[Proposition 1]{K4}.

\begin{remark} \label{rem:Dubrovin-Theorem-remark}
Therefore, if $K/F$ is tamely ramified and defectless, the crossed-product order $A_f$ is a valuation ring of $\Sigma_f$ if and only if the following three conditions hold: it is semihereditary; for each maximal ideal $M$ of $S$, $A_{f_{(M)}}$ is a valuation ring of $\Sigma_{f_{(M)}}$;
and there exists a set of right coset representatives $\sigma_1,\sigma_2,\ldots,\sigma_r$ of $G^Z$ in $G$ such that for all $i$, $f(\sigma_i,\sigma_i^{-1})\not\in M$.  The existence of a ``nice'' set of representatives is necessary here (cf. Remark~\ref{rem:Kessler-Wilson}): if we do not have a ``nice'' set of coset representatives, then $A_{f_M}$ could be a valuation ring of $\Sigma_{f_M}$ for every maximal ideal $M$ of $S$ without $A_f$ being a valuation ring of $\Sigma_f$ (see \cite[Example]{K5}).

We have effective criteria for determining when $A_f$ is semihereditary. Assuming that we can get hold of ``nice'' coset representatives of $G^Z$ in $G$ as above, the program of determining when the crossed-product order $A_f$ is a valuation ring of $\Sigma_f$ really boils down to determining when $A_{f_{(M)}}$ is a valuation ring of $\Sigma_{f(M)}$.
It is therefore warranted to focus our attention on the case when $K/F$ is tame totally ramified due to the preceding results. By the condition given in Proposition~\ref{prop:Valuation-Ring-Proposition}(2)(c), we may even restrict ourselves to the case when $H=G$. Here is a special case of this instance.
\end{remark}

\begin{proposition} \label{prop:A-Generalized-Janusz-Result}
Suppose $K/F$ is tame totally ramified, $S$ is finitely generated over $V$, $H=G$, and $(F,V)$ is Henselian. Then the following are equivalent:
\begin{enumerate}
    \item The crossed-product order $A_f$ is a valuation ring of $\Sigma_f$.
    \item $\Sigma_f$ is a division algebra.
\end{enumerate}
When this happens, $A_f$ is an invariant valuation ring of $\Sigma_f$ and $\Sigma_f$ is a cyclic division algebra.
\end{proposition}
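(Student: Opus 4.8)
The plan is to dispose of the non-principal case, then prove that $G$ is cyclic, and finally run the two implications separately, using the residue ring $\overline{A}_f$ for $(1)\Rightarrow(2)$ and the unique Henselian valuation ring of a division algebra for $(2)\Rightarrow(1)$. First I would record the structural consequences of the hypotheses: total ramification makes $V$ indecomposed in $K$ with $\overline{S}=\overline{V}$ and $G=G^Z=G^T$, and it is defectless, while tameness gives $\textrm{Char}(\overline{V})\nmid\,\mid\!\!G\!\!\mid$. Since $H=G$, every $f(\sigma,\tau)\in U(S)$, so $A_f$ is semihereditary by \cite[Theorem 2(a)]{K3} (equivalently by Theorem~\ref{thm:The-Main-Principal-Theorem} or Theorem~\ref{thm:The-Main-Nonprincipal-Theorem}). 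If $J(V)$ is non-principal, then because $S$ is finitely generated over $V$ and $K/F$ is totally ramified, the argument of Proposition~\ref{prop:fg-case} (via \cite[18.3 \& 18.6]{E}) forces $e(S\!\!\mid\!\! F)=1$, whence $K=F$ and both statements hold trivially with $A_f=V$. So I may assume $J(V)=\pi V$ is principal and $J(S)=\pi_S S$.

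Next I would show that $G$ is cyclic of order $m=\;\mid\!\!G\!\!\mid$. Since $S$ is finitely generated over $V$, \cite[Theorem 18.6]{E} gives $\epsilon(S\!\!\mid\!\! F)=e(S\!\!\mid\!\! F)=m$, so by the first observation recorded before Lemma~\ref{lem:A-generalized-Wilson-Theorem} the group $\Gamma_S/\Gamma_V$ is cyclic of order $m$ with $v(\pi_S)+\Gamma_V$ a generator. The assignment $\sigma\mapsto\overline{\sigma(\pi_S)/\pi_S}\in U(\overline{V})$ is then a group homomorphism (here $\sigma$ acts trivially on $\overline{V}$, as $G=G^T$), and, using $\overline{S}=\overline{V}$ so that every element of $K^{\#}$ is a unit times a power of $\pi_S$, its kernel is contained in the ramification group $G^V$, which is trivial because $K/F$ is tamely ramified and defectless. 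Hence $G$ embeds in the multiplicative group of the field $\overline{V}$; a finite subgroup of $\overline{V}^{\#}$ is cyclic, so $G$ is cyclic and $\Sigma_f=(K/F,G,f)$ is a cyclic algebra. Because $G=\langle\sigma\rangle$ acts trivially on $\overline{V}$, the residue ring $\overline{A}_f$ is generated over the central field $\overline{V}$ by the single element $\overline{x}_{\sigma}$ and is therefore \emph{commutative}; in fact $\overline{A}_f\cong\overline{V}[t]/(t^m-\overline{c})$ with $\overline{c}=\overline{\prod_i f(\sigma^i,\sigma)}\in U(\overline{V})$.

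For the equivalence I would use that $A_f$ is semihereditary with $V$ indecomposed, so Proposition~\ref{prop:Valuation-Ring-Proposition}(2) shows that $A_f$ is a valuation ring of $\Sigma_f$ if and only if $\overline{A}_f$ is simple. For $(1)\Rightarrow(2)$: a commutative simple Artinian ring is a field, so $\overline{A}_f$ has matrix size $1$; by \cite[Theorem F]{Wa} the matrix size of $\Sigma_f$ equals that of $\overline{A}_f$, so $\Sigma_f$ has matrix size $1$, i.e.\ it is a division algebra. For $(2)\Rightarrow(1)$: if $\Sigma_f$ is a division algebra, then over the Henselian field $(F,V)$ it carries a unique valuation ring $\mathcal{O}$, namely the integral closure of $V$ in $\Sigma_f$, which is an invariant (Dubrovin) valuation ring \cite{Wa}. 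Since $A_f$ is integral over $V$ we have $A_f\subseteq\mathcal{O}$, and $J(A_f)=J(S)A_f\subseteq J(\mathcal{O})$ because $J(S)\subseteq J(\mathcal{O})$; as $A_f$ is semihereditary it is extremal \cite[Theorem 1.5]{K1}, so the inclusions $\mathcal{O}\supseteq A_f$ and $J(\mathcal{O})\supseteq J(A_f)$ force $\mathcal{O}=A_f$. Thus $A_f$ is itself the invariant valuation ring of the division algebra $\Sigma_f$, which also settles the final ``when this happens'' clause.

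The main obstacle is the cyclicity of $G$: it cannot be read off from $\overline{A}_f$ alone, since a commutative twisted group algebra of a non-cyclic abelian group can still be a field (for instance a biquadratic extension), so cyclicity must be extracted from the value-group data. The delicate point there is the injectivity of the ramification pairing $\sigma\mapsto\overline{\sigma(\pi_S)/\pi_S}$, i.e.\ that its kernel lies in $G^V=\{1\}$. Once cyclicity is established, the two implications and the concluding assertions are short.
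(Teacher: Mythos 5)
Your proposal is correct and follows essentially the same route as the paper: dispose of the non-principal case via \cite[Theorems 18.3 \& 18.6]{E}, deduce cyclicity of $G$ from $\epsilon(S\!\mid\! F)=e(S\!\mid\! F)$, identify $\overline{A}_f$ with the commutative twisted group ring $\overline{V}^t[G]\cong\overline{V}[X]/(X^n-\overline{a})$ for $(1)\Rightarrow(2)$, and invoke the unique invariant valuation ring over the Henselian base for $(2)\Rightarrow(1)$. The only (harmless) deviations are that you re-derive the cyclicity of $G$ by hand via the ramification pairing $\sigma\mapsto\overline{\sigma(\pi_S)/\pi_S}$ where the paper cites \cite[20.18, 20.12, 20.2]{E} to get $G=G^T\cong\Gamma_S/\Gamma_V$, and that you prove $A_f=\mathcal{O}$ by extremality where the paper cites \cite[Theorem 1.9]{K1}.
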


\begin{proof}
If $J(V)$ is not a principal ideal of $V$, then $e(S\!\!\mid\!\! F)=\epsilon(S\!\!\mid\!\! F)=1$, by \cite[Theorem 18.3 \& Theorem 18.6]{E}, hence $K=F$ and we are done. So assume $J(V)$ is a principal ideal of $V$.

Note that $G=G^T(S\!\!\mid\!\! F)$.  Since $K/F$ is tamely ramified and defectless, if $\overline{p}$ is the characteristic exponent of $\overline{V}$ then $\gcd(\mid\!\! G^T\!\!\mid,\overline{p})=1$ by \cite[Lemma 1(a)]{K3}. Therefore by \cite[20.18, 20.12, \& 20.2]{E},
 we have $G^T\cong\Gamma_S/\Gamma_V$, since $\Gamma_S/\Gamma_V$ is $\overline{p}$-free. The remark before Lemma~\ref{lem:A-generalized-Wilson-Theorem} shows that $G$ is a cyclic group, since $e(S\!\!\mid\!\! F)=\epsilon(S\!\!\mid\!\! F)$ as $S$ is finitely generated over $V$ by
\cite[Theorem 18.6]{E}. Let $G=\langle\sigma\rangle$, $n=\mid\!\! G\!\!\mid$, and let $a=\prod_{i=0}^{n-1}f(\sigma^i,\sigma)\in U(V)$.

Suppose $A_f$ is a valuation ring of $\Sigma_f$. Note that  $$\overline{A}_f\cong \overline{V}^t[G]\cong\overline{V}[X]/(X^n-\overline{a}),$$ a commutative ring. Since $A_f$ is primary, $\overline{A}_f$ must be a field. This implies that $\Sigma_f$ is a division algebra and $A_f$ is an invariant valuation ring of $\Sigma_f$, as we saw in the proof of Proposition~\ref{prop:Cyclic-Division-Algebra}.

Now suppose $\Sigma_f$ is a division algebra. Since $(F,V)$ is Henselian, $\Sigma_f$ admits an invariant valuation ring extending $V$. Since $H=G$ and $K/F$ is tamely ramified and defectless, $A_f$ is semihereditary, hence
it is the invariant valuation ring of $\Sigma_f$ lying over $V$ by \cite[Theorem 1.9]{K1}.
\end{proof}

Note that, in the proposition above, $\Sigma_f$ is a division algebra if and only if the ramification index of $S$ over $F$ is equal to the Schur index of $\Sigma_f$. In other words, the proposition above says that, with the rather strong assumptions given, $A_f$ is a valuation ring of $\Sigma_f$ if and only if the ramification index of $S$ over $F$ is equal to the Schur index of $\Sigma_f$ (cf. \cite[Theorem 1]{J}).

\section{The graphs of cocycles}
\label{sec:graphs}

Let $M$ be a maximal ideal of $S$. For a $\sigma\in G^Z$, the assignment $\sigma H_M\mapsto [\sigma]_M$ is a well-defined graph monomorphism $\psi_M:\textrm{Gr}(f_M)\mapsto \textrm{Gr}(f^M)$ which embeds $\textrm{Gr}(f_M)$ inside $\textrm{Gr}(f^M)$.
In particular, if $\textrm{Gr}(f^M)$ is a chain then so is $\textrm{Gr}(f_M)$. As in \cite[Proposition 3.4(c)]{H}, the canonical map $\textrm{Gr}(f)\mapsto\prod_{M\, \textrm{max}} \textrm{Gr}(f^M)$ is injective.
Also note that, when $V$ is indecomposed in $K$, then for each $\sigma\in G$ the equivalence class $[\sigma]_{J(S)}$ is a singleton, and the graphs $\textrm{Gr}(f)$, $\textrm{Gr}(f_{J(S)})$, and $\textrm{Gr}(f^{J(S)})$ all coincide.

 If there exists a set of right coset representatives $\sigma_1,\sigma_2,\ldots,\sigma_r$ of $G^Z$ in $G$  such that for all $i$, $f(\sigma_i,\sigma_i^{-1})\not\in M$ then the argument in \cite[Proposition 3.4(b)]{H}, and the remarks following \cite[Proposition 3.3]{H} show that the map $\phi_M:\textrm{Gr}(f)\mapsto \textrm{Gr}(f_M)$ given by $\phi_M(\sigma H)=dH_M$
where $d\in G^Z$ satisfies $f(d^{-1}\sigma,\sigma^{-1}d)\not\in M$ is a well-defined graph epimorphism. Let $c_M:\textrm{Gr}(f)\mapsto \textrm{Gr}(f^M):\sigma H\mapsto[\sigma]_M$ be the canonical graph epimorphism from $\textrm{Gr}(f)$ to $\textrm{Gr}(f^M)$. The connections between the graphs $\textrm{Gr}(f)$, $\textrm{Gr}(f_M)$ and $\textrm{Gr}(f^M)$ are summed up in the following theorem. The theorem also characterizes the ``nice'' set of coset representatives of $G^Z$ in $G$, which has been mentioned earlier, in terms of the graph homomorphism $\psi_M$.

\begin{theorem} \label{thm:The-Graph-of-Primary-Order}

Let $N$ be a maximal ideal of $S$.
\begin{enumerate}
    \item The graph homomorphism $\psi_N$ is an isomorphism if and only
    if there exists a set of right coset representatives $\sigma_1,\sigma_2,\ldots,\sigma_r$ of $G^Z(S_N\!\!\mid\!\! F)$ in $G$  such that for all $i$, $f(\sigma_i,\sigma_i^{-1})\not\in N$. When this happens, then $\phi_N$ is a well-defined graph epimorphism and
    the following diagram commutes:
        $$\xymatrix{\textrm{Gr}(f) \ar[rr]^{\phi_N}\ar[dr]_{c_N} && \textrm{Gr}(f_N) \ar[dl]^{\psi_N}\\
 & \textrm{Gr}(f^N) & }$$
    \item If $\psi_N$ is a graph isomorphism and $M$ is another maximal ideal of $S$ then:
     \begin{enumerate}
     \item the graphs $\textrm{Gr}(f^M)$, $\textrm{Gr}(f^N)$, $\textrm{Gr}(f_M)$ and $\textrm{Gr}(f_N)$ are all isomorphic (cf. \cite[Proposition 3.7(2c)]{H}), and
      \item $\psi_M$ is also a graph isomorphism.
      \end{enumerate}
 \end{enumerate}
 In particular, if $K/F$ is tamely ramified and defectless and the crossed-product order $A_f$ is primary then
 $\psi_N$ is a graph isomorphism; the diagram above is always commutative; and as $M$ ranges over all maximal ideals of $S$, the various graphs $\textrm{Gr}(f^M)$ and $\textrm{Gr}(f_M)$ are all isomorphic to one another.
\end{theorem}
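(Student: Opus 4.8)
The whole theorem pivots on part (1), the equivalence between $\psi_N$ being a graph isomorphism and the existence of a ``nice'' transversal; once this is secured, part (2) and the concluding assertion follow by transporting a nice set between maximal ideals. Recall that $\psi_N$ is \emph{always} a graph monomorphism, so the only issue throughout is surjectivity.

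For part (1) the forward implication is the easy one. If a nice set of right coset representatives $\sigma_1,\dots,\sigma_r$ of $G^Z$ in $G$ exists, then, following \cite[Proposition 3.4(b)]{H} (and the remarks after \cite[Proposition 3.3]{H}), the map $\phi_N$ is a well-defined graph epimorphism. The content of the asserted commuting triangle is precisely that $c_N=\psi_N\circ\phi_N$; this is checked on vertices: writing $\phi_N(\sigma H)=dH_N$ with $d\in G^Z$ chosen so that $d^{-1}\sigma$ lies in the root class $[1]_N$ (i.e.\ $f(d^{-1}\sigma,\sigma^{-1}d)\notin N$), the cocycle relations give $[d]_N=[\sigma]_N=c_N(\sigma H)$. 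Since $c_N$ and $\phi_N$ are both surjective, so is $\psi_N$, and being already monic it is an isomorphism. For the converse I would argue that surjectivity of $\psi_N$ means every class $[\sigma]_N$ of $\textrm{Gr}(f^N)$ is represented by an element of $G^Z$, and then show this forces every right coset of $G^Z$ in $G$ to meet the root class $\{\sigma\mid f(\sigma,\sigma^{-1})\notin N\}$; selecting one such representative per coset produces the nice set. This last step---extracting a full transversal inside the root class from mere surjectivity---is the technical heart of (1), where the lower-subtractivity of the graph and the cocycle identities must be used carefully, exactly as in Haile.

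For part (2) the mechanism is transitivity of the $G$-action on the maximal ideals of $S$ together with conjugation. Choose $\gamma\in G$ with $\gamma(N)=M$; then $G^Z(S_M\!\mid\! F)=\gamma\,G^Z(S_N\!\mid\! F)\,\gamma^{-1}$, and since $v_M=v_N\circ\gamma^{-1}$, conjugation by $\gamma$ (combined with the cocycle identities) carries a nice set at $N$ to a nice set at $M$ and induces graph isomorphisms $\textrm{Gr}(f_N)\cong\textrm{Gr}(f_M)$ and $\textrm{Gr}(f^N)\cong\textrm{Gr}(f^M)$ (cf.\ \cite[Proposition 3.7(2c)]{H}). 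Thus if $\psi_N$ is an isomorphism, part (1) gives a nice set at $N$, conjugation yields a nice set at $M$, and part (1) applied at $M$ shows $\psi_M$ is an isomorphism, which is (2)(b). For (2)(a), composing the conjugation isomorphism $\textrm{Gr}(f^N)\cong\textrm{Gr}(f^M)$ with the now-established isomorphisms $\psi_N\colon\textrm{Gr}(f_N)\cong\textrm{Gr}(f^N)$ and $\psi_M\colon\textrm{Gr}(f_M)\cong\textrm{Gr}(f^M)$ shows all four graphs are isomorphic.

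The concluding statement then drops out. If $K/F$ is tamely ramified and defectless and $A_f$ is primary, then by Lemma~\ref{lem:Primarity-Lemma}(1) there is \emph{some} maximal ideal $M_0$ admitting a nice set, so $\psi_{M_0}$ is an isomorphism by part (1); part (2)(b) upgrades this to: $\psi_N$ is an isomorphism for \emph{every} maximal ideal $N$, whence the triangle commutes at every $N$ by part (1), and part (2)(a) makes all the graphs $\textrm{Gr}(f^M)$ and $\textrm{Gr}(f_M)$ mutually isomorphic. I expect the main obstacle to be the converse direction of (1) and the conjugation transfer in (2): both adapt Haile's DVR arguments, and the delicate point is that these manipulations are purely combinatorial and valuation-theoretic and must be shown to survive without the DVR hypothesis (indeed the paper stresses the transfer is not premised on tameness or defectlessness).
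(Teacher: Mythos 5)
Your part (1) is essentially the paper's argument. For ``nice set $\Rightarrow\psi_N$ iso'' the paper argues directly (write $\gamma=d\sigma_i$ with $d\in G^Z$ and check $\psi_N(dH_N)=[\gamma]_N$), but your route through the commuting triangle $c_N=\psi_N\circ\phi_N$ is an acceptable equivalent; and the step you defer in the other direction is in fact short rather than delicate: given surjectivity, pick for each right coset representative $\gamma_i$ an element $d_i\in G^Z$ with $[d_i]_N=[\gamma_i]_N$ and set $\sigma_i=d_i^{-1}\gamma_i$; the identity $f^{d_i}(d_i^{-1}\gamma_i,\gamma_i^{-1}d_i)=f(d_i,d_i^{-1}\gamma_i)f(\gamma_i,\gamma_i^{-1}d_i)$ together with $N^{d_i}=N$ gives $f(\sigma_i,\sigma_i^{-1})\notin N$. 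Lower subtractivity is not needed here.

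The genuine gap is in part (2). You transfer the nice set from $N$ to $M$ by conjugating with an \emph{arbitrary} $\gamma\in G$ satisfying $\gamma(N)=M$, but nothing in your sketch controls the new cocycle values: expanding $f(\gamma\sigma_i\gamma^{-1},\gamma\sigma_i^{-1}\gamma^{-1})$ via the cocycle identities introduces $f(\gamma,\cdot)$, $f(\cdot,\gamma^{-1})$ and $f(\gamma^{-1},\gamma)$, which lie in $S$ but need not be units modulo $M$ or $N$ for a general $\gamma$ (recall $x_\gamma$ need not be a unit of $A_f$), so $f(\sigma_i,\sigma_i^{-1})\notin N$ does not by itself yield $f(\gamma\sigma_i\gamma^{-1},\gamma\sigma_i^{-1}\gamma^{-1})\notin M$. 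Likewise the ``conjugation-induced'' identification $\textrm{Gr}(f^N)\cong\textrm{Gr}(f^M)$ is really a statement about the twisted cocycle $f^{\gamma}$, which is cohomologous to $f$ over $K$ but not over $S$, so it does not identify the two graphs of $f$. The paper circumvents this by taking the transfer element \emph{from the nice set itself}: since the $\sigma_i^{-1}$ form a left transversal of $G^Z(S_N\!\mid\!F)$, some $\sigma\in\{\sigma_1,\ldots,\sigma_r\}$ satisfies $M^{\sigma}=N$, and because $f(\sigma,\sigma^{-1})\notin N$ the identity $f^{\sigma}(\sigma^{-1},\sigma\gamma)f(\sigma,\gamma)=f(\sigma,\sigma^{-1})$ forces $f(\sigma,\gamma)\notin N$ for \emph{every} $\gamma\in G$. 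That uniform control is exactly what makes \emph{left translation} $[\gamma]_M\mapsto[\sigma\gamma]_N$ a well-defined order isomorphism and makes $\tau_i=\sigma^{-1}\sigma_i$ a nice transversal of $G^Z(S_M\!\mid\!F)$, whence $\psi_M$ is an isomorphism by part (1). Your concluding paragraph (Lemma on primarity gives one nice set, then (2)(b) propagates it) is correct once part (2) is repaired in this way.
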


\begin{proof} Part (1) follows from the discussion following \cite[Proposition 3.3]{H}. For suppose $\psi_N$ was an isomorphism. Let $\gamma_1,\gamma_2,\ldots,\gamma_r$ be right coset representatives of $G^Z(S _N\!\!\mid\!\! F)$ in $G$. Then there are elements $d_i\in G^Z(S_N\!\!\mid\!\! F)$ such that $[d_i]_N=[\gamma_i]_N$ for each $i$. Then the elements $\sigma_i=d_i^{-1}\gamma_i$ are right coset representatives of $G^Z(S_N\!\!\mid\!\! F)$ in $G$ such that for all $i$ we have $f(\sigma_i,\sigma_i^{-1})\not\in N$, since
$f^{d_i}(d_i^{-1}\gamma_i,\gamma_i^{-1}d_i)=f(d_i,d_i^{-1}\gamma_i)f(\gamma_i,\gamma_i^{-1}d_i)$. Conversely, suppose we have such a set of right coset representatives of $G^Z(S_N\!\!\mid\!\! F)$ in $G$. Let $\gamma\in G$. Then there is $d\in G^Z(S_N\!\!\mid\!\! F)$ such that $\gamma=d\sigma_i$ for some $i$. Clearly, $\psi_M(dH_N)=[\gamma]_N$ so that $\psi_N$ is an isomorphism.

For part (2),
let $M$ be a maximal ideal of $S$.
By part (1), there exists a set of right coset representatives $\sigma_1,\sigma_2,\ldots,\sigma_r$ of $G^Z(S_N\!\!\mid\!\! F)$ in $G$ such that for all $i$, $f(\sigma_i,\sigma_i^{-1})\not\in N$. Since the $\sigma_i^{-1}$ form a complete set of \textit{left} coset representatives of $G^Z(S_N\!\!\mid\!\! F)$ in $G$, there is a
$\sigma\in\{\sigma_1,\sigma_2,\ldots,\sigma_r\}$ such that $M^{\sigma}=N$. We claim that there is a graph isomorphism from $\textrm{Gr}(f^M)$ to $\textrm{Gr}(f^N)$ given by

 $$[\gamma]_M\mapsto [\sigma\gamma]_N.$$

For let $\gamma_1,\gamma_2\in G$. Then $f(\sigma,\gamma_i)\not\in N$, since $f^{\sigma}(\sigma^{-1},\sigma\gamma_i)f(\sigma,\gamma_i)=f(\sigma,\sigma^{-1})\not\in N$.
If $[\gamma_1]_M=[\gamma_2]_M$, then $f(\gamma_1,\gamma_1^{-1}\gamma_2), f(\gamma_2,\gamma_2^{-1}\gamma_1)\not\in M$, hence $$f^{\sigma}(\gamma_1,\gamma_1^{-1}\gamma_2), f^{\sigma}(\gamma_2,\gamma_2^{-1}\gamma_1)\not\in M^{\sigma}=N.$$ But $$f^{\sigma}(\gamma_1,\gamma_1^{-1}\gamma_2)f(\sigma,\gamma_2)=
f(\sigma,\gamma_1)f(\sigma\gamma_1,(\sigma\gamma_1)^{-1}(\sigma\gamma_2)),$$ and
$$f^{\sigma}(\gamma_2,\gamma_2^{-1}\gamma_1)f(\sigma,\gamma_1)=
f(\sigma,\gamma_2)f(\sigma\gamma_2,(\sigma\gamma_2)^{-1}(\sigma\gamma_1)).$$ Therefore
$[\sigma\gamma_1]_N=[\sigma\gamma_2]_N$ and the map above is indeed well-defined. Observe that the preceding argument is fully reversible: if $[\sigma\gamma_1]_N=[\sigma\gamma_2]_N$, then one can use the same steps in reverse to establish that $[\gamma_1]_M=[\gamma_2]_M$. Therefore the map is one-to-one as well. It is clearly onto.

 Now if $[\gamma_1]_M\leq_M[\gamma_2]_M$, then $f(\gamma_1,\gamma_1^{-1}\gamma_2)\not\in M$ hence $f^{\sigma}(\gamma_1,\gamma_1^{-1}\gamma_2)\not\in M^{\sigma}=N$.
But $f(\sigma,\gamma_2)\not\in N$ and $$f^{\sigma}(\gamma_1,\gamma_1^{-1}\gamma_2)f(\sigma,\gamma_2)=
f(\sigma,\gamma_1)f(\sigma\gamma_1,(\sigma\gamma_1)^{-1}(\sigma\gamma_2)).$$ Therefore $[\sigma\gamma_1]_N\leq_N[\sigma\gamma_2]_N$. Conversely, if $[\sigma\gamma_1]_N\leq_N[\sigma\gamma_2]_N$ then
$[\gamma_1]_M\leq_M[\gamma_2]_M$, as can be seen by simply reversing the preceding steps as before. Thus the graphs
 $\textrm{Gr}(f^M)$ and $\textrm{Gr}(f^N)$ are indeed isomorphic, and by part (1) they are both isomorphic to $\textrm{Gr}(f_N)$.

Now set $\tau_i=\sigma^{-1}\sigma_i$. Then $f(\tau_i,\tau_i^{-1})\not\in M$, since $f(\sigma_i,\sigma_i^{-1})\not\in N$. Further, $\tau_i\tau_j^{-1}(M)=M$ if and only if $\sigma_i\sigma_j^{-1}(N)=N$, so that
 $\{\tau_1,\tau_2,\ldots,\tau_r\}$ forms a complete set of right coset representatives of $G^Z(S_M\!\!\mid\!\! F)$ in $G$.
 Thus $\psi_M$ is also an isomorphism of graphs.
\end{proof}

From \cite[Example]{K5}, we see that the graphs $\textrm{Gr}(f_M)$ and $\textrm{Gr}(f^M)$ may not be isomorphic for any maximal ideal $M$ of $S$ in general. Also, we see from the same example that you could have the graphs $\textrm{Gr}(f^M)$ and $\textrm{Gr}(f^N)$  isomorphic for every choice of maximal ideals $M$ and $N$ of $S$ without $\psi_X$ being a graph isomorphism for any maximal ideal $X$ of $S$.

Further examples of graphs that would illustrate the various phenomena described in the theorem above can be found in \cite[\S 4]{H}. All the crossed-product orders in \cite[\S 4]{H} are primary, and the extension $K/F$ is tamely ramified and defectless.

For the rest of this section we shall make use of the following observation. Let $M$ be a maximal ideal of $S$ and assume there exists a set of right coset representatives $\sigma_1,\sigma_2,\ldots,\sigma_r$ of $G^Z$ in $G$ such that for all $i$, $f(\sigma_i,\sigma_i^{-1})\not\in M$. Then, as we saw in the previous section, the following holds: $\overline{A}_f\cong M_r(\overline{A}_{f_M})$ and, if $(F_h,V_h)$ is the Henselization of
$(F,V)$, we have $A_f\otimes_VV_h\cong M_r(A_{f_M}\otimes_UV_h)$ where $U=S_M\cap K^Z$. Note that all this is independent of whether or not the extension $K/F$ is tamely ramified or defectless.

\begin{corollary} \label{cor:The-Graph-Thm-Cor1}
Suppose there exists a maximal ideal $M$ of $S$ such
that $\psi_M$ is a graph isomorphism.
Then the crossed-product order $A_f$ is a valuation ring
of $\Sigma_f$ (respectively is primary) if and only if
the crossed-product order $A_{f_M}$ is a valuation ring of $\Sigma_{f_M}$ (respectively is primary).
\end{corollary}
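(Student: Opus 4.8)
The plan is to convert the graph-theoretic hypothesis into the existence of a ``nice'' set of coset representatives, and then exploit the two matrix decompositions recorded in the paragraph immediately preceding the corollary. First I would invoke Theorem~\ref{thm:The-Graph-of-Primary-Order}(1): since $\psi_M$ is a graph isomorphism, there is a set of right coset representatives $\sigma_1,\sigma_2,\ldots,\sigma_r$ of $G^Z$ in $G$ with $f(\sigma_i,\sigma_i^{-1})\not\in M$ for all $i$. As observed just before the corollary, the mere presence of such a set forces both $\overline{A}_f\cong M_r(\overline{A}_{f_M})$ and $A_f\otimes_V V_h\cong M_r(A_{f_M}\otimes_U V_h)$, where $U=S_M\cap K^Z$ and $(F_h,V_h)$ is the Henselization of $(F,V)$; crucially, these isomorphisms hold with \emph{no} assumption on the ramification of $K/F$, which is exactly why the corollary can dispense with the tameness hypothesis.

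For the primary statement I would work entirely at the residue-ring level. A ring $R$ is primary precisely when $\overline{R}=R/J(R)$ is simple, and $M_r(B)$ is simple if and only if $B$ is simple. Hence $A_f$ is primary if and only if $\overline{A}_f$ is simple, if and only if $M_r(\overline{A}_{f_M})$ is simple, if and only if $\overline{A}_{f_M}$ is simple, if and only if $A_{f_M}$ is primary.

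For the valuation-ring statement I would chain four equivalences through the Henselization. By \cite[Theorem F]{Wa}, $A_f$ is a valuation ring of $\Sigma_f$ if and only if $A_f\otimes_V V_h$ is a valuation ring of $\Sigma_f\otimes_F F_h$. Rewriting the latter through the matrix decomposition and the identification $\Sigma_f\otimes_F F_h=M_r(\Sigma_{f_M}\otimes_{K^Z}F_h)$ (as in the proof of Theorem~\ref{thm:The-Main-Dubrovin-Theorem}), this says that $M_r(A_{f_M}\otimes_U V_h)$ is a valuation ring of $M_r(\Sigma_{f_M}\otimes_{K^Z}F_h)$. By the Morita-type equivalence for Dubrovin valuation rings \cite[\S 1, Theorem 7]{D1}, that in turn is equivalent to $A_{f_M}\otimes_U V_h$ being a valuation ring of $\Sigma_{f_M}\otimes_{K^Z}F_h$. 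Finally, since $(F_h,V_h)$ is also a Henselization of $(K^Z,U)$, one more application of \cite[Theorem F]{Wa} shows this is equivalent to $A_{f_M}$ being a valuation ring of $\Sigma_{f_M}$, closing the chain.

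The substance here is essentially bookkeeping, since the hard content is already packaged in the two matrix decompositions. The one step warranting genuine care is the Morita reduction via \cite[\S 1, Theorem 7]{D1}: I must make sure the ambient central simple algebra is matched correctly on each side, using $M_r(\Sigma_{f_M}\otimes_{K^Z}F_h)=\Sigma_f\otimes_F F_h$, so that the property ``valuation ring of'' is being asserted in the correct overalgebra throughout and the equivalence does not silently shift the containing algebra.
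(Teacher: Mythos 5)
Your proposal is correct and follows exactly the route the paper intends: Theorem~\ref{thm:The-Graph-of-Primary-Order}(1) converts the hypothesis on $\psi_M$ into a ``nice'' set of coset representatives, and the two decompositions $\overline{A}_f\cong M_r(\overline{A}_{f_M})$ and $A_f\otimes_VV_h\cong M_r(A_{f_M}\otimes_UV_h)$ recorded immediately before the corollary (valid without any tameness hypothesis) then give the primary and valuation-ring equivalences via simplicity of residue rings and via \cite[Theorem F]{Wa} together with \cite[\S 1, Theorem 7]{D1}, respectively. The paper leaves this proof implicit, and your write-up supplies precisely the intended bookkeeping.
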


\begin{remark} \label{rem:maximal-ideal-remark}
In the corollary above as well as elsewhere in this section, when a stated condition holds
with respect to one maximal ideal of $S$, then the same condition holds for every maximal ideal of $S$.
This is due to Theorem~\ref{thm:The-Graph-of-Primary-Order}.
\end{remark}

\begin{corollary} \label{cor:The-Graph-Thm-Cor2}
Suppose there exists a maximal ideal $M$ of $S$ such
that $\psi_M$ is a graph isomorphism. Then the following hold:
\begin{enumerate}
    \item There is a one-to-one correspondence between the ideals of $A_f$ containing $J(A_f)$ and
    the ideals of $A_{f_M}$ containing $J(A_{f_M})$. This correspondence between the ideals preserves
    inclusion, products, and intersections. In particular, the number of maximal ideals of $A_f$ and $A_{f_M}$
    is the same (cf. \cite[Theorem 3.10]{H}).
    \item The crossed-product order $A_f$ is semihereditary if and only if the crossed-product order $A_{f_M}$ is semihereditary.
    \end{enumerate}
\end{corollary}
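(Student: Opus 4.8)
The plan is to derive both statements directly from the two structural isomorphisms recorded in the observation preceding Corollary~\ref{cor:The-Graph-Thm-Cor1}. By Theorem~\ref{thm:The-Graph-of-Primary-Order}(1), the hypothesis that $\psi_M$ is a graph isomorphism is equivalent to the existence of a set of right coset representatives $\sigma_1,\sigma_2,\ldots,\sigma_r$ of $G^Z(S_M\!\!\mid\!\! F)$ in $G$ with $f(\sigma_i,\sigma_i^{-1})\not\in M$ for all $i$. Hence that observation applies and gives $\overline{A}_f\cong M_r(\overline{A}_{f_M})$ together with $A_f\otimes_V V_h\cong M_r(A_{f_M}\otimes_U V_h)$, where $U=S_M\cap K^Z$ and $V_h$ is the Henselization of $V$, which (as noted there) simultaneously serves as a Henselization of $U$. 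Everything below is read off from these two isomorphisms, and no tameness or defectlessness is needed.

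For part (1), I would compose three inclusion-preserving bijections. First, the ideals of $A_f$ containing $J(A_f)$ correspond to the ideals of $\overline{A}_f=A_f/J(A_f)$ via $I\mapsto I/J(A_f)$. Second, under $\overline{A}_f\cong M_r(\overline{A}_{f_M})$ the two-sided ideals of the matrix ring correspond to those of $\overline{A}_{f_M}$ by $\mathcal{I}\mapsto M_r(\mathcal{I})$; this correspondence satisfies $M_r(\mathcal{I})M_r(\mathcal{J})=M_r(\mathcal{I}\mathcal{J})$ and $M_r(\mathcal{I})\cap M_r(\mathcal{J})=M_r(\mathcal{I}\cap\mathcal{J})$, so it preserves inclusions, products and intersections. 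Third, the ideals of $\overline{A}_{f_M}$ correspond to the ideals of $A_{f_M}$ containing $J(A_{f_M})$. Composing yields the asserted bijection; products of ideals are tracked modulo the respective radicals (that is, $I_1I_2$ is followed through $\overline{I}_1\,\overline{I}_2$), which is the only place where a little care is needed, since a product of two ideals each containing the radical need not itself contain it. Finally, because the image of a Jacobson radical under a surjection lies in the Jacobson radical of the image, every maximal two-sided ideal contains $J(A_f)$, respectively $J(A_{f_M})$; as the bijection preserves inclusion it carries maximal ideals to maximal ideals, giving the equality of their numbers.

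For part (2), I would run the chain of equivalences
\begin{eqnarray*}
A_f \textrm{ semihereditary} & \iff & A_f\otimes_V V_h \textrm{ semihereditary}\\
& \iff & M_r(A_{f_M}\otimes_U V_h)\textrm{ semihereditary}\\
& \iff & A_{f_M}\otimes_U V_h \textrm{ semihereditary}\\
& \iff & A_{f_M}\textrm{ semihereditary}.
\end{eqnarray*}
The outer two equivalences are \cite[Theorem 3.4]{K1} (an order over a valuation ring is semihereditary precisely when its Henselization is), applied to the center $V$ on the left and to $U$ on the right, exactly as in the proof of Proposition~\ref{prop:A-global-local-result}. The middle equivalence combines the isomorphism $A_f\otimes_V V_h\cong M_r(A_{f_M}\otimes_U V_h)$ with the Morita invariance of semiheredity for a full matrix ring (equivalently, $\textrm{wgld(}M_r(R)\textrm{)}=\textrm{wgld(}R\textrm{)}$ and coherence is preserved, so one may invoke the criterion of Lemma~\ref{lem:Coherent-lemma}(3)).

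The computations are routine; the one genuinely delicate point is the product-preservation clause of part (1), which must be phrased modulo the radical as indicated. Before applying the matrix correspondence I would also check that the integer $r$ appearing in $\overline{A}_f\cong M_r(\overline{A}_{f_M})$ is the same as the one in $A_f\otimes_V V_h\cong M_r(A_{f_M}\otimes_U V_h)$ — both equal the number of coset representatives $\sigma_1,\ldots,\sigma_r$ — so that the two halves of the argument are compatible.
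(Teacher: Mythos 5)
Your proposal is correct and follows essentially the same route as the paper: both reduce to the isomorphisms $\overline{A}_f\cong M_r(\overline{A}_{f_M})$ and $A_f\otimes_VV_h\cong M_r(A_{f_M}\otimes_UV_h)$ furnished by the ``nice'' coset representatives that Theorem~\ref{thm:The-Graph-of-Primary-Order}(1) extracts from the hypothesis, deriving part (1) from the matrix-ring ideal correspondence and part (2) from \cite[Theorem 3.4]{K1} together with Morita invariance. Your extra care about interpreting the product-preservation clause modulo the radical is a sensible refinement of a point the paper leaves implicit.
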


\begin{proof} Part (1) follows from the fact that $\overline{A}_f\cong M_r(\overline{A}_{f_M})$. Since
$A_f\otimes_VV_h\cong M_r(A_{f_M}\otimes_UF_h)$, part (2) follows from \cite[Theorem 3.4]{K1}.\end{proof}

Part (2) of the corollary above generalizes \cite[Theorem 4.14]{CW2}. Also note that
if $H=G$, then $\psi_M$ is always a graph isomorphism, hence the conclusions of the corollary above always hold. But in general, if $\psi_M$ is not a graph isomorphism, then the conclusions of the corollary above my not hold (see \cite[Example]{K5}).

\section*{Acknowledgment}
\label{sec:ack}

The author wishes to thank the referee for his/her detailed and helpful comments which have
resulted in an improved exposition.





\bibliographystyle{model1-num-names}
\bibliography{<your-bib-database>}



\end{document}